\documentclass[11pt]{article}
\usepackage{etex}
\usepackage{mathtools}
\usepackage{enumitem}

\usepackage{amsmath,amsthm}
\usepackage{csquotes}
\usepackage{todonotes}

\usepackage{amssymb}

\usepackage{amssymb}

\usepackage{graphicx}

\usepackage{subfig}
\usepackage[final]{showkeys}

\usepackage{etoolbox}
\usepackage{relsize}

\usepackage{wasysym}

\usepackage{mathrsfs}

\usepackage{mathpazo}

\usepackage[titletoc]{appendix}
\usepackage[doc]{optional}

\usepackage{soul}

\usepackage{colortbl,booktabs,sectsty,multirow}
\usepackage{xcolor}

\usepackage{cancel}

\usepackage{empheq}

\definecolor{myblue}{rgb}{.8, .8, 1}
  \newcommand*\mybluebox[1]{
    \colorbox{myblue}{\hspace{1em}#1\hspace{1em}}}

\usepackage[obeyspaces,hyphens,spaces]{url}

\usepackage{hyperref}
\hypersetup{
    colorlinks=true,
    linkcolor=blue,
    citecolor=blue,
    filecolor=magenta,
    urlcolor=cyan
}

\usepackage[
  open,
  openlevel=2,
  atend,
  numbered
]{bookmark}

\usepackage[capitalize, nameinlink, noabbrev]{cleveref}
\crefname{equation}{}{}
\crefname{chapter}{Chapter}{Chapters}
\crefname{item}{item}{items}
\crefname{figure}{Figure}{Figures}
\crefname{theorem}{Theorem}{Theorems}
\crefname{lemma}{Lemma}{Lemmas}
\crefname{proposition}{Proposition}{Propositions}
\crefname{corollary}{Corollary}{Corollarys}
\crefname{definition}{Definition}{Definitions}
\crefname{fact}{Fact}{Facts}
\crefname{example}{Example}{Examples}
\crefname{algorithm}{Algorithm}{Algorithms}
\crefname{remark}{Remark}{Remarks}
\crefname{note}{Note}{Notes}
\crefname{notation}{Notation}{Notations}
\crefname{case}{Case}{Cases}
\crefname{exercise}{Exercise}{Exercises}
\crefname{question}{Question}{Questions}
\crefname{claim}{Claim}{Claims}
\crefname{enumi}{}{}

\usepackage[top= 2cm, bottom = 2 cm, left = 2.2 cm, right= 2.2 cm]{geometry}

\usepackage{float}

\usepackage{pgf}

\parindent  4mm

\usepackage{array}
\usepackage{tabu}

\setlength{\arrayrulewidth}{0.4mm}
\setlength{\tabcolsep}{15pt}

\allowdisplaybreaks

\numberwithin{equation}{section}

\theoremstyle{plain}
\newtheorem{theorem}{Theorem}[section]

\newtheorem{corollary}[theorem]{Corollary}
\newtheorem{fact}[theorem]{Fact}
\newtheorem{lemma}[theorem]{Lemma}
\newtheorem{proposition}[theorem]{Proposition}

\theoremstyle{definition}
\newtheorem{definition}[theorem]{Definition}
\newtheorem{example}[theorem]{Example}

\newcommand{\argmin}{\ensuremath{\operatorname{argmin}}}
\newcommand{\inte}{\ensuremath{\operatorname{int}}}
\newcommand{\bd}{\ensuremath{\operatorname{bd}}}

\newcommand{\aff}{\ensuremath{\operatorname{aff} \,}}

\newcommand{\spn}{\ensuremath{{\operatorname{span} \,}}}

\newcommand{\dom}{\ensuremath{\operatorname{dom}}}

\newcommand{\Id}{\ensuremath{\operatorname{Id}}}

\newcommand{\C}{\ensuremath{\operatorname{C}}}
\newcommand{\D}{\ensuremath{\operatorname{D}}}

\newcommand{\Pro}{\ensuremath{\operatorname{P}}}

\newcommand{\I}{\ensuremath{\operatorname{I}}}

\newcommand{\CCO}[1]{CC{#1}}

\providecommand{\abs}[1]{\lvert#1\rvert}
\providecommand{\norm}[1]{\lVert#1\rVert}

\providecommand{\innp}[1]{\langle#1\rangle}
\providecommand{\Innp}[1]{\Big\langle#1\Big\rangle}

\begin{document}

\title{ \sffamily
	Bregman circumcenters: basic theory
}

\author{
         Hui\ Ouyang\thanks{
                 Mathematics, University of British Columbia, Kelowna, B.C.\ V1V~1V7, Canada.
                 E-mail: \href{mailto:hui.ouyang@alumni.ubc.ca}{\texttt{hui.ouyang@alumni.ubc.ca}}.}~
         and Xianfu\ Wang\thanks{
                 Mathematics, University of British Columbia, Kelowna, B.C.\ V1V~1V7, Canada.
                 E-mail: \href{mailto:shawn.wang@ubc.ca}{\texttt{shawn.wang@ubc.ca}}.}
                 }

\date{April 5, 2021}
\maketitle

\begin{abstract}
	\noindent
Circumcenters play an important role in the design and analysis of accelerating
various iterative methods in optimization.
In this work,
we propose Bregman (pseudo-)circumcenters associated with finite sets.  We show the existence
and give explicit formulae for the unique backward and
	 forward Bregman pseudo-circumcenters of finite sets. Moreover, we use duality to establish
connections between backward and forward Bregman (pseudo-)circumcenters.
Various examples are presented to illustrate the backward and forward Bregman (pseudo-)circumcenters of finite sets.
Our general framework for circumcenters paves the way for the development of
accelerating iterative methods by Bregman circumcenters.
\end{abstract}
	
	{\small
		\noindent
		{\bfseries 2020 Mathematics Subject Classification:}
		{
			Primary 90C48, 47H04, 47H05;
			Secondary 90C25,  52A41.
		}
		
		\noindent{\bfseries Keywords:}
		Bregman distance, Legendre function, backward Bregman projection, forward Bregman projection,   backward Bregman (pseudo-)circumcenter,  forward Bregman (pseudo-)circumcenter.
	}
\section{Introduction}

Circumcenter is a classical concept in geometry. Recently, circumcenters have been used to accelerate iterative methods, such as the Douglas-Rachford method and method of alternating projections, in optimization; see e.g., \cite{BBCS2020CRMbetter, BBCS2017, BBCS2018, BBCS2019, BBCS2020ConvexFeasibility}. Compared with the classic Douglas-Rachford method and the method of alternating projections, the circumcentered methods present much better performance for solving the best approximation problems and feasibility problems.
In \cite{BOyW2018}, we provided a systematic study on the circumcenter of a finite set in a Hilbert space.
This allowed us to investigate circumcentered
methods of nonexpansive mappings, isometries, and best approximation mappings; see, e.g., \cite{BOyW2019Isometry, BOyW2019LinearConvergence, BOyW2020BAM}.
Bregman distances have also been widely studied in optimization;
see \cite{censor, BBC2003, BC2003, BD2002, reich, wen, laude} and references therein.
A natural question is: Can one define circucmcenters using Bregman distances and use them to accelerate
iterative methods? However, up to now a study of circumcenters in
the framework of Bregman distances is still missing in the literature.

\emph{In this work, under general Bregman distances, we introduce appropriate definitions of  circumcenters of finitely many points in a Hilbert space,  investigate the existence and uniqueness of  circumcenters of finite sets,  and present explicit formulae for the unique circumcenters. One of the distinguished features is that while the classical circumcenter might
fail to exist the Bregman (pseudo-)circumcenters can exist. Our work sets up the theoretical foundation for utilizing Bregman circumcenters to accelerate
iterative methods in optimization. 
}

In general, the Bregman distances are neither symmetric nor
full domain, these cause many technical challenges. On the one hand, 
we have to introduce backward Bregman (pseudo-)circumcenters
and forward Bregman (pseudo-)circumcenters; on the other hand, appropriate affine subspaces are 
needed to explore the uniqueness
of Bregman circumcenters.
Our main results in this work are the following:
\begin{itemize}
	\item[\textbf{R1:}] \cref{theor:affine:character:P} characterizes backward and forward Bregman projectors onto affine subspaces.
		
	\item[\textbf{R2:}] We provide sufficient conditions for the existence of  backward and forward Bregman circumcenters of finite sets  and give corresponding circumcenters in  \Cref{theorem:formualCCS:Pleft,theorem:forwardCCS} by
using the backward and forward Bregman projections onto affine subspaces.
Moreover, the unique backward and forward Bregman pseudo-circumcenters  are characterized in  \Cref{theorem:formualCCS,theorem:psuCCS:forward} by using the Eucliean projections onto affine subspaces, respectively.
	
	\item[\textbf{R3:}] Some dual expressions of the backward and forward Bregman (pseudo-)circumcenters 
are demonstrated in \cref{theor:CCS:Rel}.
\end{itemize}

The paper is organized as follows. Some fundamental results on Bregman distances and projections are presented in
\cref{sec:Preliminaries} for subsequent usage.
In \Cref{sec:BackwardBregmancircumcenters,sec:ForwardBregmancircumcenters}, we systematically
investigate backward and forward Bregman (pseudo-)circumcenters, respectively. 
In particular, we give backward and forward Bregman circumcenters of finite sets via Bregman projections; show 
the uniqueness of backward and forward Bregman pseudo-circumcenters; state equivalent expressions of  backward and forward Bregman pseudo-circumcenters; and provide concrete examples of backward and forward Bregman (pseudo-)circumcenters.
In \cref{sec:Miscellaneous}, we establish some duality correspondences between backward and forward Bregman pseudo-circumcenters.
Section~\ref{s:compare} compares the Bregman (pseudo-)circumcenters with the classical circumcenter by examples. While the
classical circumcenter might not exist the Bregman circumcenters exist.
 Section~\ref{s:conclude} finishes the paper.

\section{Preliminaries} \label{sec:Preliminaries}
In this section, we review some results on Bregman distances and characterize forward and backward Bregman 
projections onto affine subspaces, which are essential to our later analysis.

\subsection*{Notation}
Throughout the work, we assume that $\mathbb{N}=\{0,1,2,\ldots\}$ and $\{ m,  n \} \subseteq  \mathbb{N} \smallsetminus \{0\}$, and that
\begin{empheq}[box=\mybluebox]{equation*}
\mathcal{H}    \text{ is a real Hilbert space  with inner product } \innp{\cdot, \cdot} \text{ and induced norm } \norm{\cdot}.	
\end{empheq}
$\Gamma_{0} (\mathcal{H}) $ is the set of proper closed convex functions 
from $\mathcal{H}$ to $\left]-\infty, +\infty\right]$.
Let $f:\mathcal{H} \to \left]-\infty, +\infty\right]$ be proper.  The \emph{domain}  
(\emph{conjugate function, gradient},  respectively) of $f$ is 
denoted by $\dom f$ ($f^{*}$,    $\nabla f$,   respectively).
 Let $C$ be a nonempty subset of $\mathcal{H}$.  Its \emph{interior} and \emph{boundary} are abbreviated by $\inte C$  and $\bd C$, respectively.
 $C$ is an \emph{affine subspace} of
 $\mathcal{H}$ if $C \neq \varnothing$ and $(\forall \rho\in\mathbb{R})$ $\rho
 C + (1-\rho)C = C$. The smallest affine subspace of $\mathcal{H}$ containing $C$ is
  denoted by $\aff C$ and called the \emph{affine hull} of $C$. The \emph{orthogonal
 	complement of $C$} is the set $ C^{\perp} :=\{x \in \mathcal{H}~:~ (\forall y \in C) ~\innp{x,y}=0\}$. The \emph{best approximation operator} (or \emph{projector}) onto $C$ under the Euclidean distance is denoted by $\Pro_{C}$, that is, $(\forall x \in \mathcal{H} )$ $\Pro_{C}x :=  \argmin_{y \in C} \norm{x-y}$.
 Given  the points  $ a_{1}, \ldots, a_{m} $ in $ \mathcal{H}$, the \emph{Gram matrix} $G(a_{1}, \ldots, a_{m})$ is defined as
 \begin{align*}
 G(a_{1}, a_{2}, \ldots, a_{m}) :=
 \begin{pmatrix}
 \norm{a_{1}}^{2} &\innp{a_{1},a_{2}} & \cdots & \innp{a_{1}, a_{m}}  \\
  \innp{a_{2},a_{1}} & \norm{a_{2}}^{2} & \cdots & \innp{a_{2},a_{m}} \\
 \vdots & \vdots & ~~& \vdots \\
 \innp{a_{m},a_{1}} & \innp{a_{m},a_{2}} & \cdots & \norm{a_{m}}^{2} \\
 \end{pmatrix}.
 \end{align*}
 For every $x \in \mathcal{H}$ and $\delta \in \mathbb{R}_{++}$,  $B [x; \delta] $ is the \emph{closed ball with center at $x$ and with radius $\delta$}. Let $A : \mathcal{H} \to 2^{\mathcal{H}}$ and let $x \in \mathcal{H}$. Then $A$ is \emph{locally bounded at $x$} if there exists $\delta \in \mathbb{R}_{++}$ such that $A(B[x;\delta] )$ is bounded.
  For convenience, if $A(x)$ is a singleton for some $x \in \mathcal{H}$, by a slight abuse of notation we allow
  $A(x)$ to stand for  its unique element.
For other notation not explicitly defined here, we refer the reader to \cite{BC2017}.

\subsection*{Legendre functions and Bregman distances}
Legendre functions are instrumental for our analysis. 
\begin{definition} {\rm \cite[Definition~5.2 and Theorem~5.6]{BBC2001}} \label{def:Legendre}
	Suppose  that $f \in \Gamma_{0} (\mathcal{H}) $.  We say $f$ is:
	\begin{enumerate}
		\item  \emph{essentially smooth},
$\dom \partial f =\inte \dom f \neq \varnothing$, $f$ is G\^ateaux differentiable on $\inte \dom f$, and $\norm{\nabla f (x_{k})} \to +\infty$, for every sequence $(x_{k})_{k \in \mathbb{N}}$ in $\inte \dom f$ converging to some point in $\bd \dom f$.
		\item  \label{def:Legendre:convex} \emph{essentially strictly convex}, if $(\partial f)^{-1}$ is locally bounded on its domain and $f$ is strictly convex on every convex subset of $\dom \partial f$.
		\item \emph{Legendre},  if $f$ is both essentially smooth and essentially strictly convex.
	\end{enumerate}
\end{definition}

\begin{fact}{\rm \cite[Lemma~7.3(vii)]{BBC2001}}  \label{fact:PropertD}
	Suppose that $f \in \Gamma_{0} (\mathcal{H}) $  with $\inte \dom f \neq \varnothing$, that $f$ is G\^ateaux differentiable  on $\inte \dom f$, and that $f$ is essentially strictly convex.
	Let $x$ and $y$ be in $ \inte \dom f$. Then  $D_{f}(x, y) = D_{f^{*}}(\nabla f (y),\nabla f (x))$.
\end{fact}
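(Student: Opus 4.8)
The plan is to unfold both Bregman distances from their definitions and bridge them through the Fenchel--Young equality together with the conjugacy between $\nabla f$ and $\nabla f^{*}$. Recall that whenever a function $g$ is G\^ateaux differentiable at the relevant point, $D_{g}(u,v) = g(u) - g(v) - \innp{\nabla g(v), u-v}$; and that for $z \in \inte \dom f$ the G\^ateaux differentiability of $f$ at $z$ forces $\nabla f(z) \in \partial f(z)$, which by the reciprocity of the subdifferential yields both the Fenchel--Young equality $f(z) + f^{*}(\nabla f(z)) = \innp{z, \nabla f(z)}$ and the dual inclusion $z \in \partial f^{*}(\nabla f(z))$. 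These three ingredients, applied at $z=x$ and $z=y$, are the entire engine of the proof.

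First I would justify that the right-hand side is even well defined, i.e.\ that $f^{*}$ is G\^ateaux differentiable at $\nabla f(x)$ and at $\nabla f(y)$. Since $f$ is essentially strictly convex, the standard conjugacy duality for Legendre-type functions gives that $f^{*}$ is essentially smooth, hence $f^{*}$ is G\^ateaux differentiable on $\inte \dom f^{*} = \dom \partial f^{*}$. Combining this with $x \in \partial f^{*}(\nabla f(x))$ and $y \in \partial f^{*}(\nabla f(y))$ shows $\nabla f(x), \nabla f(y) \in \inte \dom f^{*}$; moreover, since $\partial f^{*}$ at these points is the singleton $\{\nabla f^{*}(\cdot)\}$, we obtain the inversion identities $\nabla f^{*}(\nabla f(x)) = x$ and $\nabla f^{*}(\nabla f(y)) = y$.

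Then the remainder is a short algebraic manipulation. Expand
$D_{f^{*}}(\nabla f(y), \nabla f(x)) = f^{*}(\nabla f(y)) - f^{*}(\nabla f(x)) - \innp{\nabla f^{*}(\nabla f(x)), \nabla f(y) - \nabla f(x)}$, replace $\nabla f^{*}(\nabla f(x))$ by $x$, and substitute $f^{*}(\nabla f(y)) = \innp{y, \nabla f(y)} - f(y)$ and $f^{*}(\nabla f(x)) = \innp{x, \nabla f(x)} - f(x)$ from the two Fenchel--Young equalities. The terms involving $\innp{x, \nabla f(x)}$ cancel, and what is left regroups as $f(x) - f(y) - \innp{\nabla f(y), x - y}$, which is exactly $D_{f}(x,y)$.

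The only genuine obstacle is the first step: one must know that essential strict convexity of $f$ transfers, under conjugation, into enough regularity of $f^{*}$ so that $\nabla f^{*}$ is defined at $\nabla f(x)$ and $\nabla f(y)$ and inverts $\nabla f$ there. Once that duality is invoked, everything collapses to the two-line computation above. (If one prefers, one may simply cite the essential-strict-convexity/essential-smoothness duality and the inversion formula for gradients directly, and proceed immediately to the expansion.)
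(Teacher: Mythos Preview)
The paper does not supply its own proof of this statement: it is recorded as a \emph{Fact} and attributed directly to \cite[Lemma~7.3(vii)]{BBC2001}. Your argument is a correct self-contained proof. The essential-strict-convexity $\Leftrightarrow$ essential-smoothness duality you invoke is exactly \cite[Theorem~5.4]{BBC2001}, and once $\nabla f^{*}(\nabla f(x))=x$, $\nabla f^{*}(\nabla f(y))=y$ and the two Fenchel--Young equalities are in hand, the expansion you describe is precisely the computation carried out in the cited reference. So your proposal matches the original source's approach; there is simply nothing in the present paper to compare it against.
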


\begin{fact} {\rm \cite[Corollary~5.5 and Theorem~5.10]{BBC2001}}  \label{fact:nablaf:nablaf*:id}
	Suppose  that $f \in \Gamma_{0} (\mathcal{H}) $.  Then $f$ is Legendre if and only if $f^{*}$ is. In this case,  the gradient mapping $\nabla f: \inte \dom f \to \inte \dom f^{*}$
	is bijective, with inverse $(\nabla f)^{-1} =\nabla f^{*} : \inte \dom f^{*} \to \inte \dom f$. Moreover, the gradient mappings $\nabla f$, $\nabla f^{*}$ are both norm-to-weak continuous and locally bounded on their respective domains.
\end{fact}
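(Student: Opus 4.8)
The plan is to deduce everything from the calculus of the maximal monotone operator $\partial f$, using the two structural facts available for $f\in\Gamma_{0}(\mathcal{H})$: the conjugacy relation $(\partial f)^{-1}=\partial f^{*}$ and the biconjugation identity $f^{**}=f$. Accordingly I would split the argument into three parts: (i) the equivalence ``$f$ is Legendre $\Leftrightarrow$ $f^{*}$ is Legendre''; (ii) bijectivity of $\nabla f\colon\inte\dom f\to\inte\dom f^{*}$ together with the identification $(\nabla f)^{-1}=\nabla f^{*}$; and (iii) the local boundedness and norm-to-weak continuity of $\nabla f$ and $\nabla f^{*}$ on their respective domains.

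For (i) the core is the ``half-duality'' that $f$ is essentially smooth if and only if $f^{*}$ is essentially strictly convex; applying this to $f^{*}$ and using $f^{**}=f$ then gives ``$f$ essentially strictly convex $\Leftrightarrow$ $f^{*}$ essentially smooth'', and since Legendre means essentially smooth \emph{and} essentially strictly convex, the equivalence for Legendre follows at once. To prove the half-duality I would pass to operator language. First, $\partial f$ is single-valued on its domain if and only if $\partial f^{*}=(\partial f)^{-1}$ is injective, and the latter is in turn equivalent to $f^{*}$ being strictly convex on every convex subset of $\dom\partial f^{*}$ (an affine segment in $\gra\partial f^{*}$ is exactly an affine piece of $f^{*}$). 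Second, since a subdifferential is locally bounded at every point of $\inte\dom f$ and $\dom\partial f=\inte\dom f$ is open, $\partial f=(\partial f^{*})^{-1}$ is automatically locally bounded on $\dom\partial f$, matching the remaining clause in the definition of essential strict convexity of $f^{*}$. The boundary blow-up in the definition of essential smoothness is in fact redundant once $\dom\partial f=\inte\dom f$ and G\^ateaux differentiability are known: if $x_{k}\to\bar x\in\bd\dom f$ with $(\nabla f(x_{k}))_{k\in\mathbb{N}}$ bounded, a weak cluster point $u$ of this sequence satisfies $(\bar x,u)\in\gra\partial f$ by the sequential closedness of $\gra\partial f$ in $\mathcal{H}_{\mathrm{strong}}\times\mathcal{H}_{\mathrm{weak}}$, forcing $\bar x\in\dom\partial f=\inte\dom f$, a contradiction, so $\norm{\nabla f(x_{k})}\to+\infty$. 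The step I expect to be the main obstacle is the reverse direction of the half-duality, i.e.\ manufacturing essential smoothness of $f$ from essential strict convexity of $f^{*}$: in infinite dimensions single-valuedness of $\partial f$ alone does not yield G\^ateaux differentiability, and one must carefully combine maximal monotonicity of $\partial f$, the closedness of its graph, and the local boundedness clause to pin down $\dom\partial f=\inte\dom f\neq\varnothing$ and upgrade monotone single-valuedness to differentiability.

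For (ii), granting (i) both $f$ and $f^{*}$ are essentially smooth, so $\partial f=\{\nabla f\}$ on $\inte\dom f$ and $\partial f^{*}=\{\nabla f^{*}\}$ on $\inte\dom f^{*}$. Then $\range\nabla f=\dom\partial f^{*}=\inte\dom f^{*}$ gives surjectivity, essential strict convexity of $f$ gives injectivity of $\partial f=\nabla f$ (equal gradients at $x\ne y$ would make $f$ affine on $[x,y]\subseteq\inte\dom f$), and $(\partial f)^{-1}=\partial f^{*}=\nabla f^{*}$ identifies the inverse. For (iii), essential strict convexity of $f$ says $(\partial f)^{-1}=\nabla f^{*}$ is locally bounded on its domain $\inte\dom f^{*}$, and applying the same to $f^{*}$ gives local boundedness of $\nabla f$ on $\inte\dom f$; norm-to-weak continuity then follows from the standard argument: for $x_{k}\to x$ in $\inte\dom f$, local boundedness makes $(\nabla f(x_{k}))_{k\in\mathbb{N}}$ bounded, each of its weak cluster points lies in $\partial f(x)=\{\nabla f(x)\}$ by weak-strong closedness of the graph, and a bounded sequence with unique weak cluster point $\nabla f(x)$ converges weakly to it, so $\nabla f(x_{k})\weakly\nabla f(x)$. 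The identical argument applies to $\nabla f^{*}$, completing the proof.
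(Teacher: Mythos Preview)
The paper does not prove this statement at all; it is recorded as a \emph{Fact} with a bare citation to \cite[Corollary~5.5 and Theorem~5.10]{BBC2001}, and no argument is supplied in the paper itself. There is therefore nothing in the present paper to compare your proposal against.

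That said, your outline is broadly faithful to the strategy of the cited reference: the half-duality ``$f$ essentially smooth $\Leftrightarrow$ $f^{*}$ essentially strictly convex'' together with $f^{**}=f$ yields the Legendre equivalence, the identities $\partial f=\{\nabla f\}$, $\partial f^{*}=\{\nabla f^{*}\}$ and $(\partial f)^{-1}=\partial f^{*}$ then give bijectivity and the inverse formula, and local boundedness plus demiclosedness of maximal monotone graphs deliver the norm-to-weak continuity. You are right to flag the reverse implication of the half-duality (from essential strict convexity of $f^{*}$ to essential smoothness of $f$) as the genuine infinite-dimensional obstacle; in \cite{BBC2001} this is precisely where the substantive work is done, and your sketch leaves that step as a declared gap rather than a completed argument. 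If your intent is merely to cite the result, as the paper does, your proposal already goes well beyond what is needed; if your intent is a self-contained proof, that reverse step must be filled in carefully.
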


\begin{definition} {\rm \cite[Definitions~7.1 and 7.7]{BBC2001}} \label{defn:BregmanDistance}
	Suppose that $f \in \Gamma_{0} (\mathcal{H}) $  with $\inte \dom f \neq \varnothing$ and that $f$ is G\^ateaux differentiable  on $\inte \dom f$. The  \emph{Bregman distance $\D_{f}$ associated with $f$} is defined by
	\begin{align*}
\D_{f}: \mathcal{H} \times \mathcal{H} \to \left[0, +\infty\right] : (x,y) \mapsto \begin{cases}
f(x) -f(y) -\innp{\nabla f(y), x-y}, \quad &\text{if } y \in  \inte \dom f;\\
+\infty,  \quad &\text{otherwise}.
\end{cases}
\end{align*}
	Moreover, let $C$ be a nonempty subset of $\mathcal{H}$. For every $(x,y) \in \dom f \times \inte \dom f$, define the \emph{backward Bregman projection}  of $y$ onto $C$ and  \emph{forward  Bregman projection} of $x$ onto $C$,  respectively, as
\begin{align*}
&\overleftarrow{\Pro}^{f}_{C}(y):= \left\{ u \in C \cap \dom f ~:~  (\forall c \in \C) \D_{f} \left( u,y \right) \leq \D_{f} (c,y)   \right\}, \text{and}\\
&\overrightarrow{\Pro}^{f}_{C}(x) := \left\{  v \in C \cap \inte \dom f ~:~ (\forall c \in \C) \D_{f} \left( x,v \right) \leq \D_{f} (x, c)  \right\}.
\end{align*}
\end{definition}

Clearly, one recovers the Euclidean distance $\D :   \mathcal{H} \times \mathcal{H} \to \left[0, +\infty\right] :  (x,y) \mapsto \frac{1}{2} \norm{x-y}^{2}$ by setting $f = \frac{1}{2} \norm{\cdot}^{2}$ in   \cref{defn:BregmanDistance}.

\subsection*{Bregman  projections}
 \Cref{fact:charac:PleftCf,fact:charac:PrightCf} play an essential role 
 for the characterizations of Bregman projections onto affine subspaces.

\begin{fact}  \label{fact:charac:PleftCf}
		Suppose  that $f \in \Gamma_{0} (\mathcal{H}) $   is Legendre, that $C$ is a closed convex subset of $\mathcal{H}$ with $C \cap \inte \dom f \neq \varnothing$, and that $y \in \inte \dom f $. Then  for every $z \in \mathcal{H}$,
		\begin{align} \label{eq:fact:charac:PleftCf}
	z=	\overleftarrow{\Pro}^{f}_{C}(y) \Leftrightarrow \left[ z \in C \cap \inte \dom f \quad \text{and} \quad (\forall c \in C)~ \Innp{c-z, \nabla f (y) -\nabla f (z)} \leq 0 \right];
		\end{align}
equivalently,
		\begin{align}\label{eq:fact:charac:PleftCf:D}
		z=	\overleftarrow{\Pro}^{f}_{C}(y) \Leftrightarrow \left[  z  \in C \cap \inte \dom f \quad \text{and} \quad (\forall c \in C)~ \D_{f} (c,y) \geq \D_{f} (c,z)  +\D_{f} (z,y) \right].
		\end{align}
	\end{fact}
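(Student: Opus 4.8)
The plan is to recognize the backward Bregman projection $\overleftarrow{\Pro}^f_C(y)$ as the solution of a convex minimization problem, apply Fermat's rule together with the subdifferential sum rule, and then pass to the $\D_f$-form via the three-point identity for Bregman distances. First I would set $g := \D_f(\cdot,y)$, which is $f$ plus an affine term and hence lies in $\Gamma_{0}(\mathcal{H})$ with $\dom g = \dom f$ and $\inte\dom g = \inte\dom f$. Since $C$ is closed and convex with $C \cap \inte\dom f \neq \varnothing$, the function $g + \iota_C$ is proper, lower semicontinuous and convex, and by \cref{defn:BregmanDistance} one has $z = \overleftarrow{\Pro}^f_C(y)$ precisely when $z$ minimizes $g + \iota_C$ over $\mathcal{H}$; essential strict convexity of $f$, which forces $g$ to be strictly convex on $\inte\dom f$, will guarantee that such a $z$ is unique once the next step shows that any minimizer lies in $\inte\dom f$, which justifies the singleton notation.

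Next I would invoke Fermat's rule: $z$ minimizes $g + \iota_C$ if and only if $0 \in \partial(g + \iota_C)(z)$. The constraint qualification $C \cap \inte\dom g = C \cap \inte\dom f \neq \varnothing$ allows the sum rule $\partial(g+\iota_C)(z) = \partial g(z) + \N_C(z)$, and since the affine part of $g$ has gradient $-\nabla f(y)$ this reads $0 \in \big(\partial f(z) - \nabla f(y)\big) + \N_C(z)$. Because $f$ is essentially smooth, $\dom\partial f = \inte\dom f$, so this forces $z \in C \cap \inte\dom f$ with $\partial f(z) = \{\nabla f(z)\}$; hence $\nabla f(y) - \nabla f(z) \in \N_C(z)$, which spelled out is exactly $(\forall c \in C)\ \innp{c - z,\, \nabla f(y) - \nabla f(z)} \leq 0$. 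This proves \eqref{eq:fact:charac:PleftCf}.

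For the equivalence with \eqref{eq:fact:charac:PleftCf:D}, I would expand the three Bregman distances directly from \cref{defn:BregmanDistance} to obtain, for every $c \in \dom f$ and $z, y \in \inte\dom f$, the identity $\D_f(c,y) - \D_f(c,z) - \D_f(z,y) = -\innp{c - z,\, \nabla f(y) - \nabla f(z)}$, while for $c \notin \dom f$ both $\D_f(c,y)$ and $\D_f(c,z)$ equal $+\infty$. This turns $(\forall c \in C)\ \D_f(c,y) \geq \D_f(c,z) + \D_f(z,y)$ into the inequality $\innp{c - z,\, \nabla f(y) - \nabla f(z)} \leq 0$ for all $c \in C \cap \dom f$; to upgrade to all $c \in C$, I would use that $\inte\dom f$ is open, so for $c \in C$ and small $t > 0$ the point $c_t := z + t(c - z)$ lies in $C \cap \inte\dom f$, apply the inequality at $c_t$, and divide by $t$. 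Together with the previous step this gives the claimed equivalence.

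The step I expect to be most delicate is the domain bookkeeping when applying Fermat's rule: verifying that the sum rule holds under exactly the stated hypothesis, and using essential smoothness ($\dom\partial f = \inte\dom f$) to conclude that \emph{any} solution $z$ automatically lies in $\inte\dom f$ --- without this, neither $\nabla f(z)$ nor the clean normal-cone formulation would even be defined. The openness argument in the last step is routine, but it is the only place where the quantifier ``for all $c \in C$'' --- rather than merely ``for all $c \in C \cap \dom f$'' --- is genuinely needed.
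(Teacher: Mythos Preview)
Your argument is correct. For \eqref{eq:fact:charac:PleftCf} you give a self-contained proof via Fermat's rule and the subdifferential sum rule, whereas the paper simply transplants the finite-dimensional proof of \cite[Proposition~3.16]{BB1997Legendre} together with \cite[Corollary~7.9]{BBC2001}; your route is more explicit and makes the role of essential smoothness (forcing $z\in\inte\dom f$) and essential strict convexity (uniqueness) transparent. For \eqref{eq:fact:charac:PleftCf:D} both you and the paper use the three-point identity for the forward implication, but the paper handles the backward implication more simply: from $\D_f(c,y)\geq \D_f(c,z)+\D_f(z,y)$ and $\D_f(c,z)\geq 0$ one gets $\D_f(c,y)\geq \D_f(z,y)$ for every $c\in C$, which is exactly the definition of $z=\overleftarrow{\Pro}^f_C(y)$. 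This bypasses your line-segment approximation $c_t=z+t(c-z)$ entirely; your detour through \eqref{eq:fact:charac:PleftCf} works, but the nonnegativity shortcut is cleaner.
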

\begin{proof}
	Note that although \cite{BB1997Legendre} is on $\mathbb{R}^{n}$, the proof of \cite[Proposition~3.16]{BB1997Legendre} works in the Hilbert space as well.  Hence, mimic  the proof of \cite[Proposition~3.16]{BB1997Legendre} and apply     \cite[Corollary~7.9]{BBC2001} to  obtain \cref{eq:fact:charac:PleftCf}.
	
	In addition, in view of \cref{eq:fact:charac:PleftCf} and \cref{defn:BregmanDistance}, we have
	\begin{align*}
	z=	\overleftarrow{\Pro}^{f}_{C}(y) \Rightarrow \left[  z  \in C \cap \inte \dom f \quad \text{and} \quad (\forall c \in C)~ \D_{f} (c,y) \geq \D_{f} (c,z)  +\D_{f} (z,y) \right].
	\end{align*}
	Furthermore, assume that  $z  \in C \cap \inte \dom f$ and $ (\forall c \in C)$ $\D_{f} (c,y) \geq \D_{f} (c,z)  +\D_{f} (z,y) $. Then
	\begin{align*}
(\forall c \in C) \quad \D_{f} (c,y) \geq \D_{f} (c,z)  +\D_{f} (z,y) \geq  \D_{f} (z,y),
	\end{align*}
which, by  \cref{defn:BregmanDistance}, implies that 	$z=	\overleftarrow{\Pro}^{f}_{C}(y)$. Altogether, \cref{eq:fact:charac:PleftCf:D} is true.
\end{proof}

\begin{definition} {\rm \cite[Definition~2.4]{BC2003}}
	Suppose that   $f \in \Gamma_{0} (\mathcal{H}) $  is  Legendre  such that $\dom f^{*}$ is open.  We say \emph{the function $f$ allows forward Bregman projections} if it satisfies the following properties.
	\begin{enumerate}
		\item $\nabla^{2} f$ exists and is continuous on $\inte \dom f$.
		\item $\D_{f}$ is convex on $(\inte \dom f)^{2}$.
		\item For every $x \in \inte \dom f$, $D_{f}(x, \cdot)$ is strictly convex on $\inte \dom f$.
	\end{enumerate}
\end{definition}

\begin{fact} {\rm \cite[Fact~2.6]{BC2003}}  \label{fact:charac:PrightCf}
	Suppose that $\mathcal{H} =\mathbb{R}^{n}$ and  $f \in \Gamma_{0} (\mathcal{H}) $ is  Legendre  such that $\dom f^{*}$ is open, that $f$ allows forward Bregman projections, and that  $C $ is a closed convex subset of $\mathcal{H}$ with $C \cap \inte \dom f \neq \varnothing$. Then for every $y \in \inte \dom f$,
the forward Bregman projection  of $y$ onto $C$ uniquely exists,
	 and for every $z \in \mathcal{H}$,
	\begin{align*}
	z =\overrightarrow{\Pro}^{f}_{C}(y) \Leftrightarrow \left[ z \in C \cap \inte \dom f \quad \text{and} \quad (\forall c \in C) ~ \Innp{c-z, \nabla^{2} f (z) (y - z)} \leq 0 \right];
	\end{align*}
	equivalently,
	\begin{align*}
z =\overrightarrow{\Pro}^{f}_{C}(y) \Leftrightarrow \left[ z   \in C \cap \inte \dom f \quad \text{and} \quad (\forall c \in C)~ \D_{f} (c,y) \geq \D_{f} (c,z)  +\D_{\D_{f}} \left((c,c), (y , z)\right) \right].
	\end{align*}
	Moreover, the \emph{forward Bregman projector} $\overrightarrow{\Pro}^{f}_{C}$ is continuous on $\inte \dom f$.
\end{fact}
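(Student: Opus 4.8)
The plan is to view $\overrightarrow{\Pro}^{f}_{C}(y)$ as the set of minimizers over $C$ of the function $g\colon \mathcal{H}\to\left[0,+\infty\right]$ given by $g(v):=\D_{f}(y,v)$, and to extract every assertion from the calculus of $g$. The three conditions packaged in ``$f$ allows forward Bregman projections'' are exactly what is needed: condition~(i) gives that $g$ is twice continuously differentiable on $\inte\dom f$ and, by the product rule, $\nabla g(v)=-\nabla^{2}f(v)(y-v)$; condition~(ii) gives that $g$ is convex on the convex set $\inte\dom f$; and condition~(iii), applied with first argument $y$, gives that $g$ is strictly convex there. Since $\D_{f}(y,v)=+\infty$ whenever $v\notin\inte\dom f$ (\cref{defn:BregmanDistance}), computing the forward projection is precisely the convex program $\min_{v\in C}g(v)$, whose feasible set meets $\inte\dom g=\inte\dom f$ by hypothesis.

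Uniqueness is then immediate from the strict convexity of $g$. For existence, in $\mathcal{H}=\mathbb{R}^{n}$ I would show that $g+\iota_{C}$ is lower semicontinuous with bounded lower level sets, so that its infimum is attained; this coercivity is exactly where the openness of $\dom f^{*}$ (equivalently, $\nabla f$ mapping $\inte\dom f$ onto an open set) and the essential smoothness of $f$ enter. Any such minimizer $z$ automatically lies in $\inte\dom f$: the optimal value is finite because $C\cap\inte\dom f\neq\varnothing$, while $g$ is finite only on $\inte\dom f$.

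Given $z\in C\cap\inte\dom f=C\cap\inte\dom g$, the characterization is the standard first-order optimality condition for minimizing a convex differentiable function over a convex set: $z$ is optimal if and only if $\scal{c-z}{\nabla g(z)}\geq 0$ for all $c\in C$, and substituting $\nabla g(z)=-\nabla^{2}f(z)(y-z)$ turns this into $(\forall c\in C)\ \scal{c-z}{\nabla^{2}f(z)(y-z)}\leq 0$. For the equivalent form in terms of Bregman distances I would first compute, by the same product rule, $\nabla\D_{f}(y,z)=\bigl(\nabla f(y)-\nabla f(z),\,-\nabla^{2}f(z)(y-z)\bigr)$, note $\D_{f}(c,c)=0$ for $c\in\inte\dom f$, and expand $\D_{\D_{f}}\bigl((c,c),(y,z)\bigr)$; after the $\nabla f(z)$-terms telescope to zero one is left with the identity
\begin{align*}
\D_{f}(c,y)-\D_{f}(c,z)-\D_{\D_{f}}\bigl((c,c),(y,z)\bigr)=-\scal{c-z}{\nabla^{2}f(z)(y-z)},
\end{align*}
valid for $c\in C\cap\inte\dom f$. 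Since $\inte\dom f$ is open and $z$ lies in it, the variational inequality over all of $C$ is equivalent to the same inequality over $C\cap\inte\dom f$ (move along the segment from $z$ toward a general $c\in C$), so the two displayed characterizations agree.

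For the final assertion, continuity of $\overrightarrow{\Pro}^{f}_{C}$ on $\inte\dom f$, I would take $y_{k}\to y$ in $\inte\dom f$, set $z_{k}:=\overrightarrow{\Pro}^{f}_{C}(y_{k})$, argue that $(z_{k})_{k\in\mathbb{N}}$ is bounded (uniform coercivity near $y$) with all cluster points in $C\cap\inte\dom f$, and pass to the limit in $\scal{c-z_{k}}{\nabla^{2}f(z_{k})(y_{k}-z_{k})}\leq 0$ using the continuity of $\nabla^{2}f$ from~(i); the uniqueness already established then forces the whole sequence to converge to $\overrightarrow{\Pro}^{f}_{C}(y)$. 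I expect the genuine obstacles to be (a)~the existence step, i.e.\ extracting from ``$\dom f^{*}$ open'' the coercivity and closedness that make the convex program attain its infimum, and (b)~its counterpart in the continuity proof, namely ruling out that a cluster point of the projections escapes to $\bd\dom f$; the gradient computations, the first-order optimality condition, and the algebraic identity relating the two forms of the characterization are routine by comparison, and finite-dimensionality is used precisely to convert bounded level sets into compactness in (a) and to extract convergent subsequences in (b).
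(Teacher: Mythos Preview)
The paper does not supply its own proof of this statement: it is imported verbatim from \cite[Fact~2.6]{BC2003} and stated as a Fact without argument. So there is nothing in the present paper to compare your proposal against.

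That said, your plan is the natural one and matches the reasoning underlying the original reference. Viewing $\overrightarrow{\Pro}^{f}_{C}(y)$ as $\argmin_{v\in C} g(v)$ with $g=\D_{f}(y,\cdot)$, using condition~(iii) for strict convexity (hence uniqueness), computing $\nabla g(z)=-\nabla^{2}f(z)(y-z)$, and reading off the first displayed equivalence from the first-order optimality condition for a differentiable convex function over a convex set is exactly how this is done. Your algebraic identity
\[
\D_{f}(c,y)-\D_{f}(c,z)-\D_{\D_{f}}\bigl((c,c),(y,z)\bigr)=-\Innp{c-z,\nabla^{2}f(z)(y-z)}
\]
is correct (it collapses via the three-point identity), and the segment argument $z+t(c-z)\in C\cap\inte\dom f$ for small $t>0$ legitimately upgrades the inequality from $C\cap\inte\dom f$ to all of $C$. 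One small clarification: condition~(ii), the joint convexity of $\D_{f}$ on $(\inte\dom f)^{2}$, is not really needed for the convexity of $g$ (condition~(iii) already gives strict convexity); its role is to make $\D_{\D_{f}}$ a bona fide Bregman distance, in particular nonnegative, so that the second characterization is a genuine strengthening. Your assessment that the only nonroutine steps are the coercivity/existence argument and the exclusion of boundary cluster points in the continuity proof is accurate; both are handled in \cite{BC2003} using the openness of $\dom f^{*}$ and Legendreness in the way you sketch.
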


Note that not every Legendre function allows forward Bregman projections and
that the backward and forward Bregman projections are different notions (see, e.g.,  \cite{BB1997Legendre}, \cite{BC2003} and \cite{BD2002}).
Below  we list particular functions satisfying conditions required by \cref{fact:charac:PleftCf} or \cref{fact:charac:PrightCf}.
\begin{fact} {\rm \cite[Examples~2.1 and 2.7]{BC2003}} \label{fact:examplef}
Suppose that $\mathcal{H} =\mathbb{R}^{n}$. 	Denote by  $(\forall x \in \mathcal{H} )$ $x= (x_{i})^{n}_{i=1} $. Then
the following functions  are  Legendre  such that the domain of their conjugates are open. Moreover, the energy, negative entropy,  and Fermi-Dirac entropy allow forward Bregman projections.\footnotemark
\footnotetext{Here and elsewhere, we use the convention that $0 \ln (0) =0$.}
\begin{enumerate}
	\item $($energy$)$ $f :x \mapsto \frac{1}{2} \norm{x}^{2} =\frac{1}{2}  \sum^{n}_{i=1} \abs{x_{i}}^{2}$, with $\dom f =\mathbb{R}^{n}$.
	\item $($negative entropy$)$ $f :x \mapsto   \sum^{n}_{i=1} x_{i} \ln (x_{i}) -x_{i}$, with $\dom f =\left[0, +\infty\right[^{n} $.
	\item $($Fermi-Dirac entropy$)$ $f :x \mapsto   \sum^{n}_{i=1} x_{i} \ln  (x_{i}) + (1-x_{i} ) \ln(1-x_{i} )$, with $\dom f =\left[0, 1\right]^{n} $.
		\item $($Burg entropy$)$ $f :x \mapsto   -\sum^{n}_{i=1}   \ln (x_{i})  $, with $\dom f =\left]0, +\infty\right[^{n} $.
	\item   $f :x \mapsto  - \sum^{n}_{i=1} \sqrt{x_{i} }$, with $\dom f =\left[0, +\infty\right[^{n} $.
\end{enumerate}
\end{fact}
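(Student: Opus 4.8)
The plan is to reduce everything to one dimension by separability. Each of the five functions has the form $f(x)=\sum_{i=1}^{n}\varphi(x_{i})$ for a single scalar convex function $\varphi$, so that $\dom f=(\dom\varphi)^{n}$, $\nabla f(x)=(\varphi'(x_{i}))_{i=1}^{n}$, $\nabla^{2}f(x)=\operatorname{diag}\big(\varphi''(x_{i})\big)_{i=1}^{n}$, $f^{*}(y)=\sum_{i=1}^{n}\varphi^{*}(y_{i})$, $\dom f^{*}=(\dom\varphi^{*})^{n}$, and $\D_{f}(x,y)=\sum_{i=1}^{n}\D_{\varphi}(x_{i},y_{i})$. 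Since Legendreness, openness of the conjugate domain, continuity of $\nabla^{2}f$, joint convexity of $\D_{f}$ on $(\inte\dom f)^{2}$, and strict convexity of $\D_{f}(x,\cdot)$ all hold for $f$ as soon as they hold for $\varphi$, it suffices to treat the scalar building blocks $\varphi\in\big\{\tfrac12 t^{2},\ t\ln t-t,\ t\ln t+(1-t)\ln(1-t),\ -\ln t,\ -\sqrt{t}\big\}$.

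First I would verify that each $\varphi$ is a one-dimensional Legendre function with the stated domain. Each $\varphi$ is $C^{\infty}$ with $\varphi''>0$ on the interior of its domain, hence strictly convex there and with $(\partial\varphi)^{-1}=(\varphi')^{-1}$ single-valued and continuous, so each $\varphi$ is essentially strictly convex. Essential smoothness amounts to $\dom\partial\varphi=\inte\dom\varphi$ together with $\lvert\varphi'(t)\rvert\to+\infty$ as $t$ approaches a finite boundary point, which one checks directly: $\varphi'(t)=\ln t\to-\infty$ (negative entropy), $\varphi'(t)=\ln\tfrac{t}{1-t}\to\mp\infty$ at $0^{+},1^{-}$ (Fermi--Dirac), $\varphi'(t)=-1/t\to-\infty$ (Burg), $\varphi'(t)=-\tfrac12 t^{-1/2}\to-\infty$ (negative square root), while the energy has full domain and nothing to check. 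Hence each $\varphi$, and therefore each $f$, is Legendre. Openness of $\dom f^{*}$ then follows because, by \cref{fact:nablaf:nablaf*:id}, $\range\nabla f=\inte\dom f^{*}$, so it is enough to see that $\range\varphi'=\dom\varphi^{*}$ for each summand; a direct Fenchel computation gives $\varphi^{*}(s)=\tfrac12 s^{2}$ on $\mathbb{R}$, $\varphi^{*}(s)=e^{s}$ on $\mathbb{R}$, $\varphi^{*}(s)=\ln(1+e^{s})$ on $\mathbb{R}$, $\varphi^{*}(s)=-\ln(-s)-1$ on $\left]-\infty,0\right[$, and $\varphi^{*}(s)=-\tfrac{1}{4s}$ on $\left]-\infty,0\right[$, respectively, all with open domain equal to $\range\varphi'$.

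Next, for the energy, the negative entropy and the Fermi--Dirac entropy I would verify the three conditions defining \enquote{allows forward Bregman projections.} Condition (a) is immediate, since each $\varphi''$ is smooth on the open interval $\inte\dom\varphi$. For (b) and (c) I would compute the Hessian of the scalar Bregman distance $(x,y)\mapsto\D_{\varphi}(x,y)=\varphi(x)-\varphi(y)-\varphi'(y)(x-y)$; writing $h:=1/\varphi''>0$, one gets
\[
	\nabla^{2}\D_{\varphi}(x,y)=\begin{pmatrix} \varphi''(x) & -\varphi''(y) \\ -\varphi''(y) & \dfrac{h(y)+h'(y)(x-y)}{h(y)^{2}} \end{pmatrix},
\]
whose $(1,1)$ entry is positive and whose determinant equals $\big(h(y)+h'(y)(x-y)-h(x)\big)\big/\big(h(x)\,h(y)^{2}\big)$. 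Therefore $\D_{\varphi}$ is jointly convex on $(\inte\dom\varphi)^{2}$ exactly when $h=1/\varphi''$ is concave on $\inte\dom\varphi$, and in that case the $(2,2)$ entry $\big(h(y)+h'(y)(x-y)\big)/h(y)^{2}\ge h(x)/h(y)^{2}>0$, which equals $\partial^{2}_{yy}\D_{\varphi}(x,y)$, so $\D_{\varphi}(x,\cdot)$ is strictly convex. Since $1/\varphi''=1,\ t,\ t(1-t)$ for the energy, negative entropy and Fermi--Dirac respectively — all concave, the first two affine — conditions (b) and (c) follow, and summing over coordinates transfers them to $f$. (For the Burg entropy and $-\sqrt{t}$ one has $1/\varphi''=t^{2}$ and $4t^{3/2}$, both strictly convex, so $\D_{\varphi}$ is not jointly convex; this is why those two are not claimed to allow forward Bregman projections.)

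The only non-mechanical part, and thus the main obstacle, is the Hessian computation above together with the resulting reformulation \enquote{$\D_{\varphi}$ is jointly convex $\iff 1/\varphi''$ is concave}; once it is in place, conditions (b) and (c) are read off immediately and everything else is bookkeeping. The one point still requiring a little care is that joint convexity on $(\inte\dom f)^{2}$ and coordinatewise strict convexity of $\D_{f}(x,\cdot)$ genuinely survive the passage from the scalar $\varphi$ to $f=\sum_{i}\varphi(x_{i})$, which they do because both properties are stable under forming $(x,y)\mapsto\sum_{i}\D_{\varphi}(x_{i},y_{i})$.
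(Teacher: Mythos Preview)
Your proof is correct and self-contained. The paper itself does not prove this statement: it is recorded as a \emph{Fact} with a bare citation to \cite[Examples~2.1 and 2.7]{BC2003}, so there is no argument in the paper to compare against beyond that pointer.

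What you have supplied is essentially the standard verification behind that citation, carried out explicitly. The separable reduction to a scalar $\varphi$ is exactly how these examples are handled in the source; the Hessian computation leading to the criterion ``$\D_{\varphi}$ jointly convex $\Leftrightarrow 1/\varphi''$ concave'' is the known Bauschke--Borwein characterisation, and your derivation of it is accurate (the determinant identity and the bound on the $(2,2)$ entry both check out). The conjugate computations, the Legendre checks, and the observation that the Burg and negative-square-root cases have $1/\varphi''$ strictly convex (hence fail joint convexity of $\D_{\varphi}$) are all correct. The passage from $\varphi$ back to $f=\sum_{i}\varphi(x_{i})$ is indeed routine: joint convexity of a sum of nonnegative jointly convex terms, and strict convexity of $y\mapsto\sum_{i}\D_{\varphi}(x_{i},y_{i})$ since each summand is strictly convex in its own coordinate. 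So where the paper simply imports the result, you have reconstructed the full argument; nothing is missing.
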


Bregman projections onto affine subspaces are characterized in the following result.
Notice that \cref{theor:affine:character:P}\cref{theor:affine:character:P:Lback} with
$f(x)= \sum^{n}_{i=1} x_{i}\log x_{i}$, where $x:= (x_{1}, \ldots, x_{n}) \in \mathbb{R}^{n}$, reduces to
\cite[Corollary~2.1]{Teboulle1992}.
\begin{theorem} \label{theor:affine:character:P}
	Suppose that $f \in \Gamma_{0} (\mathcal{H}) $ is  Legendre,    that   $U $ and $L$ are, respectively, closed affine  and linear subspaces of $\mathcal{H}$ with $U \cap \inte \dom f \neq \varnothing$ and $L \cap \inte \dom f \neq \varnothing$.  Let $y \in \inte \dom f$ and $z \in \mathcal{H}$.  Then the following assertions hold.
	\begin{enumerate}
		\item \label{theor:affine:character:PleftCf} The following three statements are equivalent.
		\begin{enumerate}
			\item \label{theor:affine:character:PleftCf:=}$z = 	\overleftarrow{\Pro}^{f}_{U} (y) $.
			\item \label{theor:affine:character:PleftCf:innp}$z \in U \cap \inte \dom f $ and $(\forall u \in U) $ $ \innp{\nabla f(y) -\nabla f ( z), u- z } = 0 $.
			\item \label{theor:affine:character:PleftCf:D}$z \in U \cap \inte \dom f $ and $(\forall u \in U) $ $\D_{f} (u,y) = \D_{f} (u,z)  +\D_{f} (z,y)$.
		\end{enumerate}
		\item  \label{theor:affine:character:P:Lback} $z = 	\overleftarrow{\Pro}^{f}_{L} (y)  \Leftrightarrow \left[  z \in L \cap \inte \dom f ~\text{and} ~ \left( \nabla f(y) -\nabla f ( z) \right) \in L^{\perp} \right].$
		\item \label{theor:affine:character:PrightCf} Suppose that  $\mathcal{H} =\mathbb{R}^{n}$, that  $\dom f^{*}$ is open, and that $f$ allows forward Bregman projections. Then the following three statements are equivalent.
		\begin{enumerate}
			\item \label{theor:affine:character:PrightCf:=} $z =\overrightarrow{\Pro}^{f}_{U}(y)$.
			\item \label{theor:affine:character:PrightCf:innp} $z \in U \cap \inte \dom f $ and $(\forall u \in U) $  $ \innp{u-z, \nabla^{2} f (z) (y - z)} = 0 $.
			\item \label{theor:affine:character:PrightCf:D} $z \in U \cap \inte \dom f $ and $(\forall u \in U) $  $\D_{f} (u,y) = \D_{f} (u,z)  +\D_{\D_{f}} \left((u,u), (y , z)\right)$.
		\end{enumerate}
		\item \label{theor:affine:character:P:Lfor} Suppose that  $\mathcal{H}  =\mathbb{R}^{n}$, that  $\dom f^{*}$ is open, and that $f$ allows forward Bregman projections. Then $z =\overrightarrow{\Pro}^{f}_{L}(y) \Leftrightarrow \left[ z \in L \cap \inte \dom f ~ \text{and} ~  \nabla^{2} f (z) (y - z) \in L^{\perp} \right]. $
	\end{enumerate}
	
\end{theorem}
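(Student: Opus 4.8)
The plan is to treat $U$ and $L$ simply as closed convex sets, invoke the already-established characterizations of backward and forward Bregman projections onto closed convex sets (\cref{fact:charac:PleftCf} and \cref{fact:charac:PrightCf}), and then upgrade the ``$\le 0$'' inequalities appearing there to the ``$=0$'' equalities of the theorem by exploiting the reflection symmetry of affine subspaces. The one elementary observation driving every part is this: if $U$ is an affine subspace and $z\in U$, then for each $u\in U$ the reflected point $2z-u$ again lies in $U$ (the coefficients $2$ and $-1$ sum to $1$), so for any $w\in\mathcal{H}$ the condition ``$\innp{u-z,w}\le 0$ for all $u\in U$'' is equivalent to ``$\innp{u-z,w}=0$ for all $u\in U$''; and when $L$ is a \emph{linear} subspace with $z\in L$ the latter says precisely $w\in L^{\perp}$, because $\{u-z:u\in L\}=L$.

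For \cref{theor:affine:character:PleftCf}, apply \cref{fact:charac:PleftCf} with $C=U$, which is legitimate since $U$ is closed and convex with $U\cap\inte\dom f\ne\varnothing$ and $y\in\inte\dom f$: statement \cref{theor:affine:character:PleftCf:=} is then equivalent to $z\in U\cap\inte\dom f$ together with $\innp{u-z,\nabla f(y)-\nabla f(z)}\le 0$ for all $u\in U$, which by the observation above is exactly \cref{theor:affine:character:PleftCf:innp}. For the equivalence of \cref{theor:affine:character:PleftCf:innp} and \cref{theor:affine:character:PleftCf:D}, expand the three Bregman distances via \cref{defn:BregmanDistance} to get the three-point identity $\D_{f}(u,y)-\D_{f}(u,z)-\D_{f}(z,y)=\innp{\nabla f(z)-\nabla f(y),\,u-z}$ for all $u\in\dom f$; for $u\in U\setminus\dom f$ both $\D_{f}(u,y)$ and $\D_{f}(u,z)$ equal $+\infty$, so the equality in \cref{theor:affine:character:PleftCf:D} holds trivially there, while conversely its validity on $U\cap\dom f$ already forces $\innp{\nabla f(y)-\nabla f(z),u-z}=0$ on all of $U$ because $U\cap\dom f$ contains a relative neighbourhood of $z$. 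Part \cref{theor:affine:character:P:Lback} is then the special case $U=L$ of \cref{theor:affine:character:PleftCf}, read through the second half of the observation.

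Parts \cref{theor:affine:character:PrightCf} and \cref{theor:affine:character:P:Lfor} run in exactly the same way, with \cref{fact:charac:PrightCf} in the role of \cref{fact:charac:PleftCf}; the extra hypotheses $\mathcal{H}=\mathbb{R}^{n}$, $\dom f^{*}$ open, and ``$f$ allows forward Bregman projections'' are precisely what \cref{fact:charac:PrightCf} needs, and they also guarantee that $\overrightarrow{\Pro}^{f}_{U}(y)$ exists and is unique. Here \cref{fact:charac:PrightCf} with $C=U$ makes \cref{theor:affine:character:PrightCf:=} equivalent to $z\in U\cap\inte\dom f$ together with $\innp{u-z,\nabla^{2}f(z)(y-z)}\le 0$ for all $u\in U$, which the observation turns into \cref{theor:affine:character:PrightCf:innp}. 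The equivalence with \cref{theor:affine:character:PrightCf:D} uses the second-order analogue of the three-point identity: computing $\nabla\D_{f}(y,z)=\bigl(\nabla f(y)-\nabla f(z),\ \nabla^{2}f(z)(z-y)\bigr)$ and substituting into the definition of $\D_{\D_{f}}$ yields $\D_{f}(u,y)-\D_{f}(u,z)-\D_{\D_{f}}\bigl((u,u),(y,z)\bigr)=-\innp{u-z,\nabla^{2}f(z)(y-z)}$ (using symmetry of $\nabla^{2}f(z)$), after which \cref{theor:affine:character:PrightCf:innp} $\Leftrightarrow$ \cref{theor:affine:character:PrightCf:D} is immediate and \cref{theor:affine:character:P:Lfor} is again the specialization $U=L$.

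The conceptual content is light: everything is a direct appeal to \cref{fact:charac:PleftCf}/\cref{fact:charac:PrightCf} together with the affine-reflection trick. Accordingly, the only steps that require genuine care are the verifications of the two three-point identities — particularly the second, where one must keep track of which slot of $\D_{f}$ is being differentiated when forming $\nabla\D_{f}$ and hence $\D_{\D_{f}}$ — and the small amount of extended-real-valued bookkeeping needed on $U\setminus\dom f$ for statements \cref{theor:affine:character:PleftCf:D} and \cref{theor:affine:character:PrightCf:D}.
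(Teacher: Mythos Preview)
Your proposal is correct and follows essentially the same approach as the paper: both arguments hinge on the affine-reflection trick $2z-u\in U$ to upgrade the variational inequalities of \cref{fact:charac:PleftCf} and \cref{fact:charac:PrightCf} to equalities, then obtain parts \cref{theor:affine:character:P:Lback} and \cref{theor:affine:character:P:Lfor} by specializing to the linear subspace $L$. Your treatment of \cref{theor:affine:character:PleftCf:innp}$\Leftrightarrow$\cref{theor:affine:character:PleftCf:D} via the explicit three-point identity, and of \cref{theor:affine:character:PrightCf:innp}$\Leftrightarrow$\cref{theor:affine:character:PrightCf:D} via the computed identity $\D_{f}(u,y)-\D_{f}(u,z)-\D_{\D_{f}}\bigl((u,u),(y,z)\bigr)=-\innp{u-z,\nabla^{2}f(z)(y-z)}$, is slightly more direct than the paper's, which instead closes the cycle by invoking the inequality form in \cref{fact:charac:PrightCf} to get \cref{theor:affine:character:PrightCf:D}$\Rightarrow$\cref{theor:affine:character:PrightCf:=}; your extra care with the extended-real bookkeeping on $U\setminus\dom f$ is a point the paper glosses over.
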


\begin{proof}
	\cref{theor:affine:character:PleftCf}:
	Suppose that $z = 	\overleftarrow{\Pro}^{f}_{U} (y) $.
	According to \cref{fact:charac:PleftCf}, $z  \in U \cap \inte \dom f $ and
	\begin{align} \label{eq:PCf:leq}
	(\forall u \in U ) \quad \innp{\nabla f(y) -\nabla f ( z ), u-z } \leq 0.
	\end{align}
	Because $z \in U$ and $U $ is affine, we have that  $(\forall u \in U)$  $z +(z-u) =2z-u \in U $. Hence,
	\begin{align} \label{eq:PCf:geq}
	(\forall u \in U ) \quad \innp{\nabla f(y) -\nabla f ( z ), z-u } =\innp{\nabla f(y) -\nabla f ( z ), z +(z-u)-z } \stackrel{\cref{eq:PCf:leq}}{\leq} 0.
	\end{align}
	Combine \cref{eq:PCf:leq} and \cref{eq:PCf:geq} to yield the required \cref{theor:affine:character:PleftCf:=}$\Leftrightarrow$\cref{theor:affine:character:PleftCf:innp}.
	
	On the other hand, similarly with the last part of our proof of \cref{fact:charac:PleftCf},  	\cref{theor:affine:character:PleftCf:innp} $\Leftrightarrow$ \cref{theor:affine:character:PleftCf:D} follows  from   \cref{defn:BregmanDistance} and \cref{theor:affine:character:PleftCf:=}$\Leftrightarrow$\cref{theor:affine:character:PleftCf:innp}.
	
	\cref{theor:affine:character:P:Lback}: This is clear from 	\cref{theor:affine:character:PleftCf:=} $\Leftrightarrow$ \cref{theor:affine:character:PleftCf:innp} above.
	
	\cref{theor:affine:character:PrightCf}: The proof of \cref{theor:affine:character:PrightCf:=} $\Leftrightarrow $ \cref{theor:affine:character:PrightCf:innp} is similar to the proof of \cref{theor:affine:character:PleftCf:=} $ \Leftrightarrow$ \cref{theor:affine:character:PleftCf:innp}, while this time we apply  \cref{fact:charac:PrightCf} instead of \cref{fact:charac:PleftCf}.
	
	Let $u \in U$.
By  \cref{defn:BregmanDistance},
\begin{align*}
&\D_{f} (u,y) = \D_{f} (u,z)  +\D_{\D_{f}} \left((u,u), (y , z)\right) \\
\Leftrightarrow & \D_{f} (u,y) = \D_{f} (u,z)  +\D_{f}(u,u) -\D_{f}(y , z) - \Innp{ \Big(\nabla f (y) - \nabla f(z), -\nabla f^{2} (z)(y-z)  \Big), (u-y,u-z)}\\
\Leftrightarrow &  \D_{f} (u,y) = \D_{f} (u,z) -\D_{f}(y , z) - \Innp{\nabla f (y) - \nabla f(z), u-y }  +\innp{\nabla^{2} f (z) (y - z),u-z},
\end{align*}
	which, by the three point identity in  \cite[Proposition~2.3(ii)]{BBC2003}, yields \cref{theor:affine:character:PrightCf:innp} $\Rightarrow$ \cref{theor:affine:character:PrightCf:D}.  On the other hand, by   \cref{defn:BregmanDistance}, we know that \cref{theor:affine:character:PrightCf:D}
	$\Rightarrow$ \cref{theor:affine:character:PrightCf:=}. Altogether, we obtain that \cref{theor:affine:character:PrightCf:=} $\Leftrightarrow$ \cref{theor:affine:character:PrightCf:innp} $\Leftrightarrow$  \cref{theor:affine:character:PrightCf:D}.
	
	\cref{theor:affine:character:P:Lfor}: This follows easily from \cref{theor:affine:character:PrightCf:=} $\Leftrightarrow$ \cref{theor:affine:character:PrightCf:innp}.
\end{proof}

\subsection*{Generalizations of formulae of circumcenters}
The following result is a generalized version of \cite[Theorem~4.1]{BOyW2018}, which presents an explicit  formula  of the circumcenter  of finite sets.
\begin{theorem} \label{thm:LinIndpPformula}
	Let $z_{0}, z_{1}, \ldots, z_{m}$ be  affinely independent points in $\mathcal{H}$, and let $(\lambda_{1}, \ldots, \lambda_{m}) \in \mathbb{R}^{m}$.
Set $\I:=\{1, \ldots, m  \}$ and
	\begin{align*}
p:=	z_{0}+ (z_{1}-z_{0},\ldots,z_{m}-z_{0})
	G( z_{1}-z_{0},\ldots,z_{m}-z_{0})^{-1}
	\begin{pmatrix}
	\lambda_{1}-\innp{z_{0},z_{1}-z_{0}}\\
	\vdots\\
	\lambda_{m}-\innp{z_{0},z_{m}-z_{0}} \\
	\end{pmatrix},
	\end{align*}
	where $G( z_{1}-z_{0},\ldots,z_{m}-z_{0})$ is the
	Gram matrix defined as
	\begin{align*}
	G( z_{1}-z_{0},\ldots,  z_{m}-z_{0})
	:=
	\begin{pmatrix}
	\norm{z_{1}-z_{0}}^{2}   & \cdots & \innp{z_{1}-z_{0}, z_{m}-z_{0}}  \\
	\vdots & \cdots &   \vdots \\
	\innp{z_{m}-z_{0},z_{1}-z_{0}}   & \cdots & \norm{z_{m}-z_{0}}^{2} \\
	\end{pmatrix}.
	\end{align*}
	Then $\left\{ x \in \aff ( \{z_{0}, z_{1}, \ldots, z_{m} \} ) ~:~  (\forall i \in \I) ~ \innp{x , z_{i} -z_{0}} = \lambda_{i}  \right\}  = \{p\}$.
\end{theorem}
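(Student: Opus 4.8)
The plan is to reduce the problem to a finite-dimensional linear system governed by the Gram matrix. Since $z_{0}, z_{1}, \ldots, z_{m}$ are affinely independent, the displacement vectors $z_{1}-z_{0}, \ldots, z_{m}-z_{0}$ are linearly independent, so every $x \in \aff(\{z_{0}, \ldots, z_{m}\})$ has a unique representation $x = z_{0} + \sum_{j \in \I} \alpha_{j}(z_{j}-z_{0})$ with $(\alpha_{1}, \ldots, \alpha_{m}) \in \mathbb{R}^{m}$; here the notation $(z_{1}-z_{0},\ldots,z_{m}-z_{0})\alpha$ used in the definition of $p$ stands for exactly this linear combination, i.e.\ for the synthesis operator $L\colon \mathbb{R}^{m}\to\mathcal{H}$, $\alpha\mapsto \sum_{j\in\I}\alpha_{j}(z_{j}-z_{0})$, applied to $\alpha$.

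Next I would rewrite the defining constraints in these coordinates. For $x = z_{0}+\sum_{j\in\I}\alpha_{j}(z_{j}-z_{0})$ and each $i \in \I$, the condition $\innp{x, z_{i}-z_{0}}=\lambda_{i}$ is, after expanding the inner product by bilinearity, equivalent to
\begin{align*}
\sum_{j\in\I}\innp{z_{i}-z_{0},\, z_{j}-z_{0}}\,\alpha_{j} = \lambda_{i} - \innp{z_{0},\, z_{i}-z_{0}}.
\end{align*}
In matrix form this is $G\alpha = b$, where $G := G(z_{1}-z_{0},\ldots,z_{m}-z_{0})$ and $b := \big(\lambda_{i}-\innp{z_{0},\, z_{i}-z_{0}}\big)_{i\in\I}$. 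Thus the set in the statement is in bijection, via $x\mapsto\alpha$, with the solution set of $G\alpha=b$.

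Then I would invoke the classical fact that the Gram matrix of linearly independent vectors is positive definite, hence invertible: indeed $G = L^{*}L$, so $\alpha^{\top}G\alpha = \norm{L\alpha}^{2}\geq 0$ with equality only when $L\alpha=0$, and $\ker L=\{0\}$ by linear independence of $z_{1}-z_{0},\ldots,z_{m}-z_{0}$. Consequently $G\alpha = b$ has the unique solution $\alpha = G^{-1}b$, so the constrained set is the singleton $\{\, z_{0} + L(G^{-1}b)\,\} = \{p\}$; this simultaneously shows that $p$ lies in $\aff(\{z_{0},\ldots,z_{m}\})$, that $p$ satisfies $\innp{p,\, z_{i}-z_{0}}=\lambda_{i}$ for all $i\in\I$, and that no other point does.

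The only genuine subtlety is bookkeeping: one must check that affine independence of $z_{0}, \ldots, z_{m}$ is precisely what delivers both the uniqueness of the coordinates $\alpha$ and the invertibility of $G$, and that the ``matrix times tuple-of-vectors'' notation in the formula for $p$ is to be read as the synthesis operator $L$. No analytic difficulty arises, since after restricting attention to $\spn\{z_{j}-z_{0} : j\in\I\}$ the problem is entirely finite-dimensional; the argument is essentially that of \cite[Theorem~4.1]{BOyW2018}, with the circumradius-type equations there replaced by the general linear equations $\innp{x,\, z_{i}-z_{0}}=\lambda_{i}$.
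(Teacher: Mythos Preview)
Your proposal is correct and follows essentially the same route as the paper: reduce membership in the set to the linear system $G\alpha=b$ in the coordinates $\alpha$ of $\aff(\{z_{0},\ldots,z_{m}\})$, then invoke invertibility of the Gram matrix (which the paper cites from \cite[Fact~2.13]{BOyW2018} and you prove directly via positive definiteness). The only cosmetic difference is that the paper argues uniqueness by showing any other solution $q$ satisfies $q-p\in L\cap L^{\perp}=\{0\}$, whereas you get it immediately from the bijection $x\leftrightarrow\alpha$ together with the uniqueness of $G^{-1}b$.
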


\begin{proof}
	Denote by $\Omega:=\left\{ x \in \aff ( \{z_{0}, z_{1}, \ldots, z_{m} \} ) ~:~  (\forall i \in \I) ~ \innp{x , z_{i} -z_{0}} = \lambda_{i}  \right\}  $.
	According to the assumption and \cite[Fact~2.8]{BOyW2018},
	$z_{1}-z_{0}, \ldots, z_{m}-z_{0} $ are linearly independent.
	Then by \cite[Fact~2.13]{BOyW2018}, the Gram matrix $G(z_{1}-z_{0},
	  \ldots, z_{m}-z_{0})$ is invertible. Set
	\begin{align*}
	\begin{pmatrix}
	\alpha_{1} \\
	\vdots\\
	\alpha_{m} \\
	\end{pmatrix}
	:= G(z_{1}-z_{0},  \ldots, z_{m}-z_{0})^{-1}
	\begin{pmatrix}
	\lambda_{1}-\innp{z_{0},z_{1}-z_{0}} \\
	\vdots\\
	\lambda_{m}-\innp{z_{0},z_{m}-z_{0}} \\
	\end{pmatrix}.
	\end{align*}
So, $p = z_{0}+\alpha_{1}(z_{1}-z_{0})+\alpha_{2}(z_{2}-z_{0})+\cdots+\alpha_{m}(z_{m}-z_{0}) \in \aff ( \{z_{0}, z_{1}, \ldots, z_{m} \} )$ is well-defined.

On the other hand,	using the definitions of $G(z_{1}-z_{0}, \ldots,
	z_{m}-z_{0})$, $(\alpha_{1},  \cdots, \alpha_{m})^{\intercal}$
	and $p$,  we see that
	\begin{align*}
	&G(z_{1}-z_{0}, \ldots,	z_{m}-z_{0})
	\begin{pmatrix}
	\alpha_{1} \\
	\vdots\\
	\alpha_{m} \\
	\end{pmatrix}
	=
	\begin{pmatrix}
	\lambda_{1}-\innp{z_{0},z_{1}-z_{0}} \\
	\vdots\\
	\lambda_{m}-\innp{z_{0},z_{m}-z_{0}} \\
	\end{pmatrix}\\
	\Leftrightarrow  &
	\begin{cases}
	\innp{ \alpha_{1}(z_{1}-z_{0})+ \cdots +\alpha_{m}(z_{m}-z_{0}), z_{1}-z_{0} } = \lambda_{1}-\innp{z_{0},z_{1}-z_{0}} \\
	~~~~~~~~~~\vdots \\
	\innp{\alpha_{1}(z_{1}-z_{0})+ \cdots +\alpha_{m}(z_{m}-z_{0}), z_{m}-z_{0} } = \lambda_{m}-\innp{z_{0},z_{m}-z_{0}}
	\end{cases}\\
	\Leftrightarrow &
	\begin{cases}
	\innp{p-z_{0},z_{1}-z_{0}} =\lambda_{1}-\innp{z_{0},z_{1}-z_{0}}\\
	~~~~~~~~~~\vdots \\
	\innp{p-z_{0}, z_{m}-z_{0}} =\lambda_{m}-\innp{z_{0},z_{m}-z_{0}}
	\end{cases}\\
	\Leftrightarrow & (\forall i \in \I) \quad \innp{p, z_{i}-z_{0}} = \lambda_{i}.
	\end{align*}
Hence, $p \in \Omega  $.
	
	In addition, assume $q \in \Omega$. Denote by $L:=\spn \{ z_{1} -z_{0}, \ldots, z_{m}-z_{0} \}$. Then $\{q, p\} \subseteq \Omega \subseteq \aff ( \{z_{0}, z_{1}, \ldots, z_{m} \} )$  implies that $q-p \in L$. 	
Moreover, 	$\{q, p\} \subseteq \Omega $ yields that
\begin{align*}
 (\forall i \in \I) \quad  \innp{ q , z_{i} -z_{0}} = \lambda_{i} \quad \text{and} \quad  \innp{ p , z_{i} -z_{0}} = \lambda_{i},
\end{align*}
which entails  $q-p \in L^{\perp}$. Therefore, $q-p \in L \cap L^{\perp} =\{0\}$, that is, $q=p$.

	Altogether,  the proof is complete.
\end{proof}

\section{Backward  Bregman circumcenters of finite sets} \label{sec:BackwardBregmancircumcenters}
In the whole section, suppose that $f \in \Gamma_{0} (\mathcal{H}) $  with $\inte \dom f \neq \varnothing$ and that $f$ is G\^ateaux differentiable  on $\inte \dom f$ and  that
\begin{empheq}[box=\mybluebox]{equation}   \label{eq:SBack}
\mathcal{S}:= \{q_{0}, q_{1}, \ldots, q_{m} \} \subseteq \inte \dom f \text{ and } \mathcal{S} \text{  is nonempty}.
\end{empheq}
Denote by $\I := \{1, \ldots, m \}$,
\begin{empheq}[box=\mybluebox]{equation}  \label{eq:Ef}
\overleftarrow{E}_{f}(\mathcal{S}):= \{ x \in \dom f ~:~  \D_{f} (x,q_{0}) =\D_{f} (x, q_{1}) =\cdots =\D_{f} (x, q_{m}) \},
\end{empheq}
\begin{align}  \label{eq:L}
L:= \aff ( \nabla f(\mathcal{S}) ) - \aff (  \nabla f(\mathcal{S}) ) = \spn \{ \nabla f(q_{1})-\nabla f(q_{0}), \ldots, \nabla f(q_{m})-\nabla f(q_{0}) \},
\end{align}
and
\begin{align} \label{eq:b}
(\forall i \in \I )\quad \beta_{i}:=  \innp{\nabla f(q_{i}), q_{i}} -f(q_{i})  - (\innp{\nabla f(q_{0}), q_{0}} - f(q_{0}) ).
\end{align}

\subsection*{Backward Bregman (pseudo-)circumcenter operators}
To introduce backward Bregman (pseudo-)circumcenter operators, we need the following lemmas.
\begin{lemma} \label{lemma:xinEp1pm}
We have $	x \in \overleftarrow{E}_{f}(\mathcal{S}) \Leftrightarrow  \left[  x \in \dom f \text{ and } (\forall i \in \I) \quad \innp{\nabla f(q_{i}) -\nabla f(q_{0}), x} = \beta_{i}\right]$.
\end{lemma}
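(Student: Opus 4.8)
The plan is a direct computation from the definition of the Bregman distance, with no need for the Legendre hypothesis, conjugates, or projection results; only the standing assumptions of this section (that $f$ is Gâteaux differentiable on $\inte\dom f$ and $\mathcal{S}\subseteq\inte\dom f$) are used. First I would fix $x\in\dom f$. Since each $q_i\in\inte\dom f$ by \cref{eq:SBack}, \cref{defn:BregmanDistance} gives, for every $i\in\{0\}\cup\I$, the \emph{finite} value $\D_f(x,q_i)=f(x)-f(q_i)-\innp{\nabla f(q_i),x-q_i}$ (it is a real number precisely because the second argument lies in $\inte\dom f$ and $x\in\dom f$). Hence, by \cref{eq:Ef}, membership $x\in\overleftarrow{E}_f(\mathcal{S})$ is equivalent to the system of $m$ scalar equations $\D_f(x,q_i)=\D_f(x,q_0)$, $i\in\I$.

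Next I would substitute the explicit formula into a single equation $\D_f(x,q_i)=\D_f(x,q_0)$ and observe that the term $f(x)$ cancels on both sides. Writing $\innp{\nabla f(q_i),x-q_i}=\innp{\nabla f(q_i),x}-\innp{\nabla f(q_i),q_i}$, collecting the inner products against $x$ on one side and the constants on the other, the equation becomes
\[
\innp{\nabla f(q_i)-\nabla f(q_0),\,x}=\big(\innp{\nabla f(q_i),q_i}-f(q_i)\big)-\big(\innp{\nabla f(q_0),q_0}-f(q_0)\big),
\]
whose right-hand side is exactly $\beta_i$ as defined in \cref{eq:b}. Ranging over $i\in\I$ yields the claimed equivalence, with the side condition $x\in\dom f$ carried along unchanged on both sides.

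I do not anticipate any genuine obstacle: the two points that deserve a word of care are (i) noting that $\D_f(x,q_i)$ is finite whenever $x\in\dom f$, so that the asserted chain of equalities is between real numbers, and (ii) the sign bookkeeping needed to recognize the rearranged constant as precisely $\beta_i$ rather than $-\beta_i$. Both are routine.
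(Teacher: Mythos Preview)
Your proposal is correct and follows essentially the same route as the paper's own proof: both expand $\D_f(x,q_i)=\D_f(x,q_0)$ via \cref{defn:BregmanDistance}, cancel $f(x)$, and rearrange to identify the constant as $\beta_i$ from \cref{eq:b}. The paper first records the computation for a general pair $\{i,j\}$ before specializing to $j=0$, while you go straight to $j=0$; this is a cosmetic difference only.
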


\begin{proof}
Let $x \in \dom f$. According to \cref{defn:BregmanDistance},  for every $\{ i, j \} \subseteq  \{0,1, \ldots, m \}$,   we have that
	\begin{subequations}  \label{eq:lemma:xinEp1pm}
		\begin{align}
	 \D_{f} (x, q_{i}) =\D_{f} (x,q_{j})
	&	 \Leftrightarrow  f(x) -f(q_{i}) -\innp{\nabla f(q_{i}), x-q_{i}} = f(x) -f(q_{j}) -\innp{\nabla f(q_{j}), x-q_{j}} \\
		& \Leftrightarrow 	\innp{\nabla f(q_{j}) -\nabla f(q_{i}), x}  =\innp{\nabla f(q_{j}), q_{j}} - f(q_{j})   - (  \innp{\nabla f(q_{i}), q_{i}} -f(q_{i}) ).
		\end{align}
	\end{subequations}
 Hence,
 \begin{subequations}
 	\begin{align*}
  x \in  \overleftarrow{E}_{f}(\mathcal{S})
 &	\stackrel{\cref{eq:Ef}}{\Leftrightarrow} ~ (\forall i \in \I ) ~ \D_{f} (x, q_{i}) =\D_{f} (x,q_{0})  \\
 &	\stackrel{\cref{eq:lemma:xinEp1pm}}{\Leftrightarrow}   (\forall i \in  \I )  ~ \innp{\nabla f(q_{i}) -\nabla f(q_{0}), x} =\innp{\nabla f(q_{i}), q_{i}}-f(q_{i})   -(\innp{\nabla f(q_{0}), q_{0}} -f(q_{0})  )\\
& 	\stackrel{\cref{eq:b}}{\Leftrightarrow}   (\forall i \in \I ) ~ \innp{\nabla f(q_{i}) -\nabla f(q_{0}), x} =\beta_{i}.
 	\end{align*}
 \end{subequations}
\end{proof}

\begin{lemma} \label{lemma:CCSpseudoB}
	Suppose that $\nabla f(q_{0}), \nabla f(q_{1}), \ldots, \nabla f(q_{m})$ are affinely independent. Set
	\begin{align*}
p  := \nabla f(q_{0})+\alpha_{1} \left(\nabla f(q_{1})-\nabla f(q_{0}) \right)+ \cdots+ \alpha_{m} \left(\nabla f(q_{m})-\nabla f(q_{0})\right),
	\end{align*}
	where
	\begin{align*}
	\begin{pmatrix}
	\alpha_{1} \\
	\vdots\\
	\alpha_{m } \\
	\end{pmatrix}
	:=
	G( \nabla f(q_{1})-\nabla f(q_{0}),\ldots,\nabla f(q_{m})-\nabla f(q_{0}))^{-1}
	\begin{pmatrix}
	\beta_{1} -\innp{\nabla f(q_{0}),\nabla f(q_{1})-\nabla f(q_{0})}\\
	\vdots\\
	\beta_{m}-\innp{\nabla f(q_{0}),\nabla f(q_{m})-\nabla f(q_{0})} \\
	\end{pmatrix}.
	\end{align*}
	 Then $ \left\{ x \in \aff ( \{\nabla f(q_{0}), \nabla f(q_{1}), \ldots,\nabla f(q_{m}) \} ) ~:~  (\forall i \in \I) ~ \innp{x , \nabla f(q_{i}) -\nabla f(q_{0})} = \beta_{i}  \right\}  = \{p\}$.
\end{lemma}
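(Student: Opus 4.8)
The plan is to recognize this lemma as nothing more than an instance of \cref{thm:LinIndpPformula}, with the roles of the affinely independent points $z_{0}, z_{1}, \ldots, z_{m}$ played by the gradients $\nabla f(q_{0}), \nabla f(q_{1}), \ldots, \nabla f(q_{m})$, and the roles of the scalars $\lambda_{1}, \ldots, \lambda_{m}$ played by $\beta_{1}, \ldots, \beta_{m}$. So first I would set $z_{i} := \nabla f(q_{i})$ for $i \in \{0,1,\ldots,m\}$ and $\lambda_{i} := \beta_{i}$ for $i \in \I$. The standing hypothesis of the lemma is precisely that $z_{0}, z_{1}, \ldots, z_{m}$ are affinely independent points of $\mathcal{H}$, which is exactly the hypothesis required by \cref{thm:LinIndpPformula}; the $\lambda_{i}$ are arbitrary reals, again as permitted there.

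Next I would observe that with these substitutions the point $p$ defined in the statement of \cref{thm:LinIndpPformula},
\begin{align*}
z_{0}+ (z_{1}-z_{0},\ldots,z_{m}-z_{0})
G( z_{1}-z_{0},\ldots,z_{m}-z_{0})^{-1}
\begin{pmatrix}
\lambda_{1}-\innp{z_{0},z_{1}-z_{0}}\\
\vdots\\
\lambda_{m}-\innp{z_{0},z_{m}-z_{0}}
\end{pmatrix},
\end{align*}
coincides verbatim with the point $p$ defined in the present lemma, once one writes out the matrix--vector product $(z_{1}-z_{0},\ldots,z_{m}-z_{0})\,(\alpha_{1},\ldots,\alpha_{m})^{\intercal} = \sum_{i\in\I}\alpha_{i}(z_{i}-z_{0})$ with $(\alpha_{1},\ldots,\alpha_{m})^{\intercal} = G(z_{1}-z_{0},\ldots,z_{m}-z_{0})^{-1}(\beta_{1}-\innp{z_{0},z_{1}-z_{0}},\ldots,\beta_{m}-\innp{z_{0},z_{m}-z_{0}})^{\intercal}$. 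This is a purely notational unwinding and requires no real computation.

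Finally, applying \cref{thm:LinIndpPformula} directly yields
\begin{align*}
\left\{ x \in \aff ( \{z_{0}, z_{1}, \ldots, z_{m} \} ) ~:~  (\forall i \in \I) ~ \innp{x , z_{i} -z_{0}} = \lambda_{i}  \right\}  = \{p\},
\end{align*}
which, upon translating back via $z_{i} = \nabla f(q_{i})$ and $\lambda_{i} = \beta_{i}$, is exactly the claimed identity. Since this is a straightforward specialization, I do not anticipate any genuine obstacle; the only point deserving care is to make sure the invertibility of the Gram matrix $G(\nabla f(q_{1})-\nabla f(q_{0}),\ldots,\nabla f(q_{m})-\nabla f(q_{0}))$ — needed for $(\alpha_{1},\ldots,\alpha_{m})$ and hence $p$ to be well defined — is in force, and this is guaranteed by the affine independence hypothesis exactly as it was inside the proof of \cref{thm:LinIndpPformula}.
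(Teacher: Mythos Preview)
Your proposal is correct and matches the paper's proof exactly: the paper simply states that the result follows immediately from \cref{thm:LinIndpPformula} by setting $z_{0}=\nabla f(q_{0}),\ z_{1}=\nabla f(q_{1}),\ \ldots,\ z_{m}=\nabla f(q_{m})$. Your write-up is more detailed but the substance is identical.
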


\begin{proof}
	This follows immediately from \cref{thm:LinIndpPformula} by setting $z_{0}=\nabla f(q_{0})$, $z_{1}=\nabla f(q_{1})$, $\ldots$, $z_{m}=\nabla f(q_{m})$.
\end{proof}

\begin{lemma}  	\label{lemma:x-y:Lperp}
	Let $x $ and $y$ be in $\overleftarrow{E}_{f}(\mathcal{S}) $. Then $x-y \in L^{\perp}$.
	Consequently, $\left(\forall z \in \overleftarrow{E}_{f}(\mathcal{S}) \right)$ $\overleftarrow{E}_{f}(\mathcal{S}) =\dom f \cap \left( z+L^{\perp} \right)$.
\end{lemma}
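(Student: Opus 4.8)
The plan is to reduce everything to the linear characterization of $\overleftarrow{E}_{f}(\mathcal{S})$ supplied by \cref{lemma:xinEp1pm} and then read off orthogonality against the spanning set of $L$. First I would take $x$ and $y$ in $\overleftarrow{E}_{f}(\mathcal{S})$ and apply \cref{lemma:xinEp1pm} to each: this gives $x,y \in \dom f$ together with $\innp{\nabla f(q_{i})-\nabla f(q_{0}),x} = \beta_{i}$ and $\innp{\nabla f(q_{i})-\nabla f(q_{0}),y} = \beta_{i}$ for every $i \in \I$. Subtracting the two systems kills the $\beta_{i}$'s and yields $\innp{\nabla f(q_{i})-\nabla f(q_{0}),x-y} = 0$ for all $i \in \I$. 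Since by \cref{eq:L} the vectors $\nabla f(q_{1})-\nabla f(q_{0}),\ldots,\nabla f(q_{m})-\nabla f(q_{0})$ span $L$, this is exactly the statement $x-y \in L^{\perp}$.

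For the ``consequently'' part I would fix $z \in \overleftarrow{E}_{f}(\mathcal{S})$ (note such a $z$ is only assumed to exist; the displayed identity is vacuous otherwise) and prove the two inclusions. For $\subseteq$: if $x \in \overleftarrow{E}_{f}(\mathcal{S})$, then $x \in \dom f$ and, by the first part applied to the pair $x,z$, we get $x-z \in L^{\perp}$, hence $x \in \dom f \cap (z+L^{\perp})$. For $\supseteq$: if $x \in \dom f \cap (z+L^{\perp})$, then $x-z \in L^{\perp}$, so $\innp{\nabla f(q_{i})-\nabla f(q_{0}),x-z}=0$ for each $i \in \I$; combining this with the equalities $\innp{\nabla f(q_{i})-\nabla f(q_{0}),z}=\beta_{i}$ coming from $z \in \overleftarrow{E}_{f}(\mathcal{S})$ (again via \cref{lemma:xinEp1pm}) gives $\innp{\nabla f(q_{i})-\nabla f(q_{0}),x}=\beta_{i}$ for all $i \in \I$. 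Since also $x \in \dom f$, \cref{lemma:xinEp1pm} returns $x \in \overleftarrow{E}_{f}(\mathcal{S})$.

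There is no serious obstacle here; the argument is a direct linear-algebra manipulation on the characterization already established. The only points requiring a little care are bookkeeping ones: keeping the membership constraint $x \in \dom f$ present throughout (it is not implied by the orthogonality/affine conditions and must be carried along explicitly, which is why the final set is $\dom f \cap (z+L^{\perp})$ rather than the full affine set $z+L^{\perp}$), and noting that the spanning description of $L$ in \cref{eq:L} is what converts ``orthogonal to each $\nabla f(q_{i})-\nabla f(q_{0})$'' into ``$\in L^{\perp}$''. I would present the computation using \cref{lemma:xinEp1pm} as a black box rather than unfolding \cref{defn:BregmanDistance} again.
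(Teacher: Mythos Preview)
Your proposal is correct and follows essentially the same route as the paper: apply \cref{lemma:xinEp1pm} to both $x$ and $y$, subtract, and use \cref{eq:L} to conclude $x-y\in L^{\perp}$. The paper actually leaves the ``consequently'' part implicit, so your explicit two-inclusion argument is more detailed than what appears there but entirely in the same spirit.
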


\begin{proof}
Because $ \{x,y \} \subseteq \overleftarrow{E}_{f}(\mathcal{S}) \subseteq \dom f$, due to \cref{lemma:xinEp1pm}, for every $i \in \I$,
	\begin{subequations} \label{eq:x:y:affine}
		\begin{align}
		\innp{\nabla f(q_{i }) -\nabla f(q_{0}), x} = \beta_{i} \label{eq:x:y:affine:x},\\
		\innp{\nabla f(q_{i }) -\nabla f(q_{0}), y} =\beta_{i} \label{eq:x:y:affine:y}.
		\end{align}
	\end{subequations}
Then for every $i \in\I$, subtract \cref{eq:x:y:affine:y} from \cref{eq:x:y:affine:x} to observe that
	\begin{align*}
	(\forall i \in \I ) \quad  \innp{\nabla f(q_{i }) -\nabla f(q_{0}), x-y} =0,
	\end{align*}
	which, by \cref{eq:L}, yields that $x-y\in L^{\perp}$.
\end{proof}

\begin{lemma} \label{lemma:CCSsingletonOrempty}
	The set $ \aff ( \nabla f (\mathcal{S}) ) \cap \overleftarrow{E}_{f}(\mathcal{S})$ is either empty or a singleton.
\end{lemma}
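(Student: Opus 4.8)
The plan is to show that any two points of $\aff(\nabla f(\mathcal{S})) \cap \overleftarrow{E}_{f}(\mathcal{S})$ must coincide, so that the set is either empty or a singleton. Accordingly, assume the intersection is nonempty and pick $x, y \in \aff(\nabla f(\mathcal{S})) \cap \overleftarrow{E}_{f}(\mathcal{S})$; the goal is $x = y$.

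First I would use membership in $\overleftarrow{E}_{f}(\mathcal{S})$: since $\{x,y\} \subseteq \overleftarrow{E}_{f}(\mathcal{S})$, \cref{lemma:x-y:Lperp} gives immediately $x - y \in L^{\perp}$. Next I would use membership in $\aff(\nabla f(\mathcal{S}))$: because $x$ and $y$ both lie in the affine subspace $\aff(\nabla f(\mathcal{S}))$, their difference lies in the associated linear subspace $\aff(\nabla f(\mathcal{S})) - \aff(\nabla f(\mathcal{S}))$, which by \cref{eq:L} is precisely $L$; hence $x - y \in L$. Combining the two inclusions yields $x - y \in L \cap L^{\perp} = \{0\}$, i.e.\ $x = y$, which completes the argument.

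The only slightly delicate point — and it is genuinely minor here — is recognizing that the linear subspace $L$ defined in \cref{eq:L} is exactly the direction space of $\aff(\nabla f(\mathcal{S}))$; this is already recorded in the displayed equality in \cref{eq:L} itself (namely $\aff(\nabla f(\mathcal{S})) - \aff(\nabla f(\mathcal{S})) = \spn\{\nabla f(q_{1}) - \nabla f(q_{0}), \ldots, \nabla f(q_{m}) - \nabla f(q_{0})\}$), so no additional work is needed. Thus there is no real obstacle: the statement follows by juxtaposing \cref{lemma:x-y:Lperp} (which handles the $\overleftarrow{E}_{f}$ side) with the elementary fact that differences of points in an affine subspace lie in its direction space (which handles the $\aff$ side), and then invoking $L \cap L^{\perp} = \{0\}$.
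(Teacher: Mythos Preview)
Your proposal is correct and follows essentially the same argument as the paper: assume two points in the intersection, use \cref{lemma:x-y:Lperp} to get the difference in $L^{\perp}$, use membership in $\aff(\nabla f(\mathcal{S}))$ together with \cref{eq:L} to get the difference in $L$, and conclude via $L \cap L^{\perp} = \{0\}$. The only cosmetic difference is the order in which the two inclusions are established.
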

\begin{proof}	
	Suppose that $ \aff ( \nabla f (\mathcal{S}) ) \cap \overleftarrow{E}_{f}(\mathcal{S}) \neq \varnothing$   and that  $\{ x_{1}, x_{2}\} \subseteq    \aff ( \nabla f (\mathcal{S}) ) \cap \overleftarrow{E}_{f}(\mathcal{S})$. 	
	Then
	\begin{align} \label{eq:theorem:CCSsingletonOrempty:L}
	x_{1}- x_{2}  \in \aff ( \nabla f(\mathcal{S}) ) - \aff (  \nabla f(\mathcal{S}) ) \stackrel{\cref{eq:L}}{=} L.
	\end{align}
	On the other hand,	 by  \cref{lemma:x-y:Lperp}, $x_{1}- x_{2}  \in L^{\perp}$. This combined with \cref{eq:theorem:CCSsingletonOrempty:L} to obtain that $x_{1}- x_{2}   \in L \cap L^{\perp} =\{0\}$.
	Therefore, the claimed result is true.
\end{proof}

We are now ready to define  backward Bregman circumcenters and pseudo-circumcenters.
\begin{definition} \label{defn:CCS:Bregman:left}
	Let $ \mathcal{P}( \inte \dom f)$ be  the set of all nonempty subsets of $\inte \dom f$ containing finitely many elements.
	For every $K \in \mathcal{P}( \inte \dom f)  $, set
	\begin{empheq}[box=\mybluebox]{equation*}
	\overleftarrow{E}_{f}(K):= \{ p \in \dom f ~:~ (\forall y \in K) ~ \D_{f} (p,y) \text{ is a singleton}\}.
		\end{empheq}
	\begin{enumerate}
		\item \label{defn:CCS:Bregman:left:} Define the \emph{backward Bregman circumcenter operator $\overleftarrow{\CCO{}}_{f}$ w.r.t.\,$f$} as
		\begin{empheq}[box=\mybluebox]{equation*}
		\overleftarrow{\CCO{}}_{f} : \mathcal{P}( \inte \dom f) \to 2^{\mathcal{H}} :  K \mapsto
		\aff (K )\cap \overleftarrow{E}_{f}( K ).
		\end{empheq}
	\item \label{defn:CCS:Bregman:left:ps}	Define the \emph{backward Bregman pseudo-circumcenter operator $\overleftarrow{\CCO{}}^{ps}_{f}$ w.r.t.\,$f$} as
	\begin{empheq}[box=\mybluebox]{equation*}
	\overleftarrow{\CCO{}}^{ps}_{f} :  \mathcal{P}( \inte \dom f) \to 2^{\mathcal{H}} :  K \mapsto  \aff ( \nabla f (K ))\cap \overleftarrow{E}_{f}( K ).
	\end{empheq}
	\end{enumerate}
	In particular, for every $K \in  \mathcal{P}( \inte \dom f)  $, we call the element  in  $	\overleftarrow{\CCO{}}_{f}(K) $ and   $\overleftarrow{\CCO{}}^{ps}_{f} (K)$ backward Bregman  circumcenter  and backward Bregman
pseudo-circumcenter of $K$, respectively,
\end{definition}
By \cref{lemma:CCSsingletonOrempty}, we know that $\left( \forall K \in  \mathcal{P}( \inte \dom f) \right)$ $\overleftarrow{\CCO{}}^{ps}_{f}(K)$ is either a singleton or an empty set.

\subsection*{Existence of backward Bregman  circumcenters}
\begin{proposition} \label{prop:formualCCS:Pleft:matrixEQ}
	 Set
	\begin{align*}
	&A:=
	\begin{pmatrix}
	\Innp{\nabla f (q_{1}) -\nabla f (q_{0}), q_{1} -q_{0} } & \ldots& \Innp{\nabla f (q_{1}) -\nabla f (q_{0}), q_{m} -q_{0} }\\
	\vdots & \ldots & \vdots\\
	\Innp{\nabla f (q_{m}) -\nabla f (q_{0}), q_{1} -q_{0} } & \ldots& \Innp{\nabla f (q_{m}) -\nabla f (q_{0}), q_{m} -q_{0} }\\
	\end{pmatrix}, \\
	&B:= \begin{pmatrix}
	\beta_{1} - \Innp{\nabla f (q_{1}) -\nabla f (q_{0}),q_{0} }\\
	\vdots\\
	\beta_{m} - \Innp{\nabla f (q_{m}) -\nabla f (q_{0}),q_{0} }
	\end{pmatrix} \text{ and}\\
	& \Lambda := \left\{  q_{0} +\sum^{m}_{i=1} \alpha_{i} \left( q_{i} -q_{0} \right) ~:~ (\alpha_{1}, \ldots, \alpha_{m} )^{\intercal}  \in \mathbb{R}^{m}  \text{ s.t. } A\alpha =B  \right\}.
	\end{align*}
	Then $\overleftarrow{\CCO{}}_{f}(\mathcal{S}) = \Lambda \cap \dom f$.
\end{proposition}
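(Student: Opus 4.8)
The plan is to unwind the definition $\overleftarrow{\CCO{}}_{f}(\mathcal{S}) = \aff(\mathcal{S}) \cap \overleftarrow{E}_{f}(\mathcal{S})$ and to translate the resulting membership conditions into the linear system $A\alpha = B$. The only structural fact I need about the affine hull is the standard parametrization $\aff(\mathcal{S}) = \{ q_{0} + \sum_{i=1}^{m} \alpha_{i}(q_{i}-q_{0}) : (\alpha_{1},\ldots,\alpha_{m}) \in \mathbb{R}^{m}\}$, valid whether or not $q_{0},\ldots,q_{m}$ are affinely independent; this is the bridge to the set $\Lambda$.

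For the inclusion $\overleftarrow{\CCO{}}_{f}(\mathcal{S}) \subseteq \Lambda \cap \dom f$, I would take $x \in \overleftarrow{\CCO{}}_{f}(\mathcal{S})$. Since $x \in \aff(\mathcal{S})$, fix $(\alpha_{1},\ldots,\alpha_{m}) \in \mathbb{R}^{m}$ with $x = q_{0} + \sum_{j=1}^{m} \alpha_{j}(q_{j}-q_{0})$; since $x \in \overleftarrow{E}_{f}(\mathcal{S}) \subseteq \dom f$, \cref{lemma:xinEp1pm} gives $\innp{\nabla f(q_{i})-\nabla f(q_{0}), x} = \beta_{i}$ for every $i \in \I$. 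Substituting the parametrization of $x$ and expanding by bilinearity, the scalar $\innp{\nabla f(q_{i})-\nabla f(q_{0}), q_{0}}$ splits off and the remaining sum $\sum_{j=1}^{m} \alpha_{j}\innp{\nabla f(q_{i})-\nabla f(q_{0}), q_{j}-q_{0}}$ is precisely the $i$-th entry of $A\alpha$; rearranging yields $(A\alpha)_{i} = \beta_{i} - \innp{\nabla f(q_{i})-\nabla f(q_{0}), q_{0}}$, i.e. $A\alpha = B$. Hence $x \in \Lambda$, and as $x \in \dom f$ we get $x \in \Lambda \cap \dom f$.

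The reverse inclusion runs the same computation in the other direction. Given $x \in \Lambda \cap \dom f$, pick $(\alpha_{1},\ldots,\alpha_{m}) \in \mathbb{R}^{m}$ with $A\alpha = B$ and $x = q_{0} + \sum_{j=1}^{m} \alpha_{j}(q_{j}-q_{0})$. The parametrization shows $x \in \aff(\mathcal{S})$, and expanding $\innp{\nabla f(q_{i})-\nabla f(q_{0}), x}$ exactly as before and using $A\alpha = B$ gives $\innp{\nabla f(q_{i})-\nabla f(q_{0}), x} = \beta_{i}$ for every $i \in \I$; since also $x \in \dom f$, \cref{lemma:xinEp1pm} yields $x \in \overleftarrow{E}_{f}(\mathcal{S})$. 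Therefore $x \in \aff(\mathcal{S}) \cap \overleftarrow{E}_{f}(\mathcal{S}) = \overleftarrow{\CCO{}}_{f}(\mathcal{S})$, and the two inclusions give the claimed equality.

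There is no substantive obstacle: the proposition is a bookkeeping reformulation of \cref{lemma:xinEp1pm} combined with the affine-hull parametrization. The single point demanding care is the algebraic separation, when expanding $\innp{\nabla f(q_{i})-\nabla f(q_{0}),\, q_{0} + \sum_{j} \alpha_{j}(q_{j}-q_{0})}$, of the $q_{0}$-contribution (which goes into the constant vector $B$) from the $(q_{j}-q_{0})$-contributions (which form the matrix $A$), so that the definitions of $A$ and $B$ in the statement line up with the expansion.
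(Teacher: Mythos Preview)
Your proposal is correct and follows essentially the same approach as the paper: both invoke \cref{defn:CCS:Bregman:left}\cref{defn:CCS:Bregman:left:} together with \cref{lemma:xinEp1pm}, parametrize $\aff(\mathcal{S})$ via $q_{0}+\sum_{j}\alpha_{j}(q_{j}-q_{0})$, and expand the inner products to identify the system $A\alpha=B$. The only difference is that you spell out both inclusions and the algebraic expansion explicitly, whereas the paper states the equivalence in one line.
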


\begin{proof}
	According to \cref{defn:CCS:Bregman:left}\cref{defn:CCS:Bregman:left:} and \cref{lemma:xinEp1pm}, $p \in \overleftarrow{\CCO{}}_{f}(\mathcal{S}) $ if and only if there exists  $(\alpha_{1}, \ldots, \alpha_{m} )^{\intercal} \in \mathbb{R}^{m}$ such that $ p =q_{0} +\sum^{m}_{j=1} \alpha_{i} \left( q_{j} -q_{0} \right)  \in \dom f $ and
	\begin{align*}
   (\forall i \in \I) \quad  \Innp{\nabla f(q_{i}) -\nabla f(q_{0}), q_{0} +\sum^{m}_{j=1} \alpha_{j} \left( q_{j} -q_{0} \right) } = \beta_{i},
	\end{align*}
	which entails the desired result.
\end{proof}

\begin{theorem} \label{theorem:formualCCS:Pleft}
	Assume that $\overleftarrow{E}_{f}(\mathcal{S})  \neq \varnothing$.
Let $z \in \overleftarrow{E}_{f}(\mathcal{S})$.	Then  the following hold.
	\begin{enumerate}
		\item\label{theorem:formualCCS:Pleft:EQ} $\overleftarrow{\CCO{}}_{f}(\mathcal{S}) =\aff (\mathcal{S}) \cap \dom f \cap (z+L^{\perp})$.
		\item \label{theorem:formualCCS:Pleft:P} Suppose that $\mathcal{H} =\mathbb{R}^{n}$, that $f$ is Legendre such that $\dom f^{*}$ is open, that $f$	allows forward Bregman projections,   that $\aff (\mathcal{S}) \subseteq \inte \dom f$, and that $\nabla f (\aff (\mathcal{S})  ) $ is closed and convex such that $(\forall i \in \I)$ $\pm \left( \nabla f(q_{i }) -\nabla f(q_{0})  \right)\in \nabla f (\aff (\mathcal{S})  )-\overleftarrow{\Pro}^{f^{*}}_{\nabla f (\aff (\mathcal{S})  ) } (\nabla f (z)) $
		$($e.g.\,$ \nabla f (\aff (\mathcal{S})  )  $  is a closed affine subspace$)$.
		  Then $ \overrightarrow{\Pro}^{f}_{\aff( \mathcal{S}  )} (z) \in \overleftarrow{\CCO{}}_{f}(\mathcal{S})$.
	\end{enumerate}
\end{theorem}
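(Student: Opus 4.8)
The plan is to obtain \cref{theorem:formualCCS:Pleft:EQ} directly from the definitions, and to prove \cref{theorem:formualCCS:Pleft:P} by a duality argument that converts the forward Bregman projection with respect to $f$ into a backward Bregman projection with respect to $f^{*}$. For \cref{theorem:formualCCS:Pleft:EQ}: by \cref{defn:CCS:Bregman:left}\cref{defn:CCS:Bregman:left:} applied with $K=\mathcal{S}$ we have $\overleftarrow{\CCO{}}_{f}(\mathcal{S})=\aff(\mathcal{S})\cap\overleftarrow{E}_{f}(\mathcal{S})$, and since $z\in\overleftarrow{E}_{f}(\mathcal{S})$, \cref{lemma:x-y:Lperp} identifies $\overleftarrow{E}_{f}(\mathcal{S})=\dom f\cap(z+L^{\perp})$; intersecting with $\aff(\mathcal{S})$ gives the stated formula.

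For \cref{theorem:formualCCS:Pleft:P}, note first that the hypotheses tacitly require $z\in\inte\dom f$ (they involve $\nabla f(z)$ and $\overrightarrow{\Pro}^{f}_{\aff(\mathcal{S})}(z)$). Set $V:=\nabla f(\aff(\mathcal{S}))$; by hypothesis $V$ is closed and convex, it contains $\nabla f(q_{0})$, and $V\subseteq\nabla f(\inte\dom f)=\inte\dom f^{*}=\dom f^{*}$ because $\aff(\mathcal{S})\subseteq\inte\dom f$, $\nabla f(\inte\dom f)=\inte\dom f^{*}$ by \cref{fact:nablaf:nablaf*:id}, and $\dom f^{*}$ is open. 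As $f^{*}$ is Legendre (\cref{fact:nablaf:nablaf*:id}) and $\nabla f(z)\in\inte\dom f^{*}$, the backward Bregman projection $w:=\overleftarrow{\Pro}^{f^{*}}_{V}(\nabla f(z))\in V$ is well-defined (\cref{fact:charac:PleftCf}); put $c:=\nabla f^{*}(w)\in\inte\dom f$, so $c\in\aff(\mathcal{S})$ since $w\in\nabla f(\aff(\mathcal{S}))$ and $\nabla f^{*}\circ\nabla f=\Id$ on $\inte\dom f$. The key step is to check that $c=\overrightarrow{\Pro}^{f}_{\aff(\mathcal{S})}(z)$: for every $v\in\aff(\mathcal{S})\subseteq\inte\dom f$, \cref{fact:PropertD} gives $\D_{f}(z,v)=\D_{f^{*}}(\nabla f(v),\nabla f(z))$; as $v\mapsto\nabla f(v)$ is a bijection from $\aff(\mathcal{S})$ onto $V$ (\cref{fact:nablaf:nablaf*:id}), minimizing $\D_{f}(z,\cdot)$ over $\aff(\mathcal{S})$ is the same as minimizing $\D_{f^{*}}(\cdot,\nabla f(z))$ over $V$, whose minimizer is $w$; and since $\aff(\mathcal{S})$ is a closed convex subset of $\mathbb{R}^{n}$ meeting $\inte\dom f$, the forward Bregman projection of $z$ onto it is a well-defined point by \cref{fact:charac:PrightCf}, hence equals $\nabla f^{*}(w)=c$.

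It remains to show $c\in\overleftarrow{E}_{f}(\mathcal{S})$. By \cref{lemma:xinEp1pm} and $c\in\dom f$, this amounts to $\innp{\nabla f(q_{i})-\nabla f(q_{0}),c}=\beta_{i}$ for all $i\in\I$; since $z\in\overleftarrow{E}_{f}(\mathcal{S})$ gives $\innp{\nabla f(q_{i})-\nabla f(q_{0}),z}=\beta_{i}$ for all $i\in\I$ (again \cref{lemma:xinEp1pm}), it suffices to show $z-c\in L^{\perp}$. Applying \cref{fact:charac:PleftCf} to $f^{*}$, the set $V$, and the point $\nabla f(z)$, and using $\nabla f^{*}(\nabla f(z))=z$ and $\nabla f^{*}(w)=c$, we obtain $\innp{v-w,z-c}\leq 0$ for every $v\in V$; the standing hypothesis $\pm(\nabla f(q_{i})-\nabla f(q_{0}))\in V-w$ then forces $\innp{\nabla f(q_{i})-\nabla f(q_{0}),z-c}=0$ for all $i\in\I$, i.e.\ $z-c\in L^{\perp}$ by \cref{eq:L}. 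Therefore $c=\overrightarrow{\Pro}^{f}_{\aff(\mathcal{S})}(z)\in\aff(\mathcal{S})\cap\overleftarrow{E}_{f}(\mathcal{S})=\overleftarrow{\CCO{}}_{f}(\mathcal{S})$, which is the assertion.

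I expect the main obstacle to be the duality identity $\overrightarrow{\Pro}^{f}_{\aff(\mathcal{S})}(z)=\nabla f^{*}\big(\overleftarrow{\Pro}^{f^{*}}_{\nabla f(\aff(\mathcal{S}))}(\nabla f(z))\big)$, together with the attendant domain bookkeeping (that $V\subseteq\inte\dom f^{*}$, so the backward projection onto $V$ really is the minimizer of $\D_{f^{*}}(\cdot,\nabla f(z))$, and that $z\in\inte\dom f$). Once this is available, the parenthetical case is immediate: if $V=\nabla f(\aff(\mathcal{S}))$ is a closed affine subspace, then $w\in V$ and $V-w$ is a linear subspace containing both $\nabla f(q_{i})-w$ and $\nabla f(q_{0})-w$, hence $\pm(\nabla f(q_{i})-\nabla f(q_{0}))\in V-w$ for every $i\in\I$, so the standing hypothesis of \cref{theorem:formualCCS:Pleft:P} holds automatically.
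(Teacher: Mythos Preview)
Your proof is correct and follows essentially the same approach as the paper: part~\cref{theorem:formualCCS:Pleft:EQ} is obtained identically from \cref{defn:CCS:Bregman:left}\cref{defn:CCS:Bregman:left:} and \cref{lemma:x-y:Lperp}, and for part~\cref{theorem:formualCCS:Pleft:P} both arguments apply \cref{fact:charac:PleftCf} to $f^{*}$ on $V=\nabla f(\aff(\mathcal{S}))$ at $\nabla f(z)$ and use the $\pm$ hypothesis to force $z-\overrightarrow{\Pro}^{f}_{\aff(\mathcal{S})}(z)\in L^{\perp}$. The only difference is that the paper imports the duality identity $\overrightarrow{\Pro}^{f}_{\aff(\mathcal{S})}(z)=\nabla f^{*}\bigl(\overleftarrow{\Pro}^{f^{*}}_{\nabla f(\aff(\mathcal{S}))}(\nabla f(z))\bigr)$ from \cite[Proposition~7.1]{BWYY2009}, whereas you re-derive it via \cref{fact:PropertD} and the bijectivity in \cref{fact:nablaf:nablaf*:id}; your additional remark verifying the parenthetical affine case is a welcome complement.
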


\begin{proof}
	\cref{theorem:formualCCS:Pleft:EQ}: This is clear from  \cref{defn:CCS:Bregman:left}\cref{defn:CCS:Bregman:left:}  and \cref{lemma:x-y:Lperp}.

	\cref{theorem:formualCCS:Pleft:P}:
	Note that, by \cref{eq:SBack} and \cref{fact:charac:PrightCf}, $\varnothing \neq S \subseteq   \inte \dom f \cap \aff (\mathcal{S})$ and
$\overrightarrow{\Pro}^{f}_{\aff( \mathcal{S}  )} (z) \in \aff (\mathcal{S}) \subseteq \inte \dom f$. Hence, in view of \cref{theorem:formualCCS:Pleft:EQ} above and \cref{eq:L}, it remains to show that
	\begin{align} \label{eq:theorem:formualCCS:Pleft:P:betai}
	 (\forall i \in \I) \quad \Innp{\nabla f( q_{i }) -\nabla f(q_{0}), \overrightarrow{\Pro}^{f}_{\aff( \mathcal{S}  )} (z) -z} =0.
	\end{align}
Employ  \cref{fact:nablaf:nablaf*:id} and $\aff (\mathcal{S}) \subseteq \inte \dom f$ to deduce that $\nabla f \left( \aff (\mathcal{S}) \right) \subseteq \inte \dom f^{*}$.
Then apply \cref{fact:charac:PleftCf} with $f =f^{*}$, $C=\nabla f (\aff (\mathcal{S})  ) $, and $y=\nabla f (z)$ to obtain that $\overleftarrow{\Pro}^{f^{*}}_{\nabla f (\aff (\mathcal{S})  ) } (\nabla f (z))   \in \nabla f (\aff (\mathcal{S})  )  \cap \inte \dom f^{*}$
		and that
				\begin{align*}
			&	\Innp{\nabla f^{*}(\nabla f (z)) -\nabla f^{*} \left( \overleftarrow{\Pro}^{f^{*}}_{\nabla f (\aff (\mathcal{S})  ) } (\nabla f (z))  \right), \nabla f (\aff (\mathcal{S})  ) -\overleftarrow{\Pro}^{f^{*}}_{\nabla f (\aff (\mathcal{S})  ) } (\nabla f (z))  } \leq 0\\
			\Leftrightarrow &~
			\Innp{z -\nabla f^{*} \left( \overleftarrow{\Pro}^{f^{*}}_{\nabla f (\aff (\mathcal{S})  ) } (\nabla f (z))  \right), \nabla f (\aff (\mathcal{S})  ) -\overleftarrow{\Pro}^{f^{*}}_{\nabla f (\aff (\mathcal{S})  ) } (\nabla f (z))  } \leq 0 \quad (\text{by \cref{fact:nablaf:nablaf*:id}})\\
			\Leftrightarrow &~
				\Innp{z -\overrightarrow{\Pro}^{f}_{\aff( \mathcal{S}  )}(z), \nabla f (\aff (\mathcal{S})  ) -\overleftarrow{\Pro}^{f^{*}}_{\nabla f (\aff (\mathcal{S})  ) } (\nabla f (z))  } \leq 0, \quad (\text{by \cite[Poposition~7.1]{BWYY2009}})
			\end{align*}
which, combining with the assumption that $\pm \left( \nabla f(q_{i }) -\nabla f(q_{0})  \right)\in \nabla f (\aff (\mathcal{S})  )-\overleftarrow{\Pro}^{f^{*}}_{\nabla f (\aff (\mathcal{S})  ) } (\nabla f (z)) $, forces
\cref{eq:theorem:formualCCS:Pleft:P:betai}.
\end{proof}
Notice that the condition \enquote{$ \nabla f (\aff (\mathcal{S})  )  $  is a closed affine subspace} in \cref{theorem:formualCCS:Pleft}\cref{theorem:formualCCS:Pleft:P}  doesn't imply  $ \nabla f$ affine. Because in both our \cref{exam:Back:Rn}\cref{exam:Back:Rn:negative:}$\&$\cref{exam:Back:Rn:FD:} below $\mathcal{S}$ contains points in $\mathbb{R}^{n}$ with the first coordinates being the same and $\nabla f$ is  surjective, in both cases,  $ \nabla f (\aff (\mathcal{S})  )  $  is a closed affine subspace.

\begin{example}\label{exam:Back:Rn}
	Suppose that $\mathcal{H} =\mathbb{R}^{n}$.  Denote by  $(\forall x \in \mathcal{H} )$ $x= (x_{i})^{n}_{i=1} $.  Then the following statements hold.
	\begin{enumerate}
			
		\item \label{exam:Back:Rn:negative} Suppose that  $(\forall x \in \left[0, +\infty\right[^{n} )$ $f(x) =\sum^{n}_{i=1} x_{i} \ln( x_{i}) -x_{i}$.
		\begin{enumerate}
				\item \label{exam:Back:Rn:negative:xy}  Suppose that $\mathcal{S}:= \{x,y \} \subseteq \left]0, +\infty\right[^{n}$ such that $x$ and $y$  are pairwise distinct. Let $p \in \mathcal{H} $.  Then $ p \in 	\overleftarrow{\CCO{}}_{f}(\mathcal{S}) $ if and only if $p=\alpha x +  (1-\alpha) y   \in \left[0, +\infty\right[^{n}$,   where $\alpha:= \frac{\sum^{n}_{i=1} y_{i}-x_{i} -y_{i} \ln \left(\frac{y_{i}}{x_{i}} \right) }{\sum^{n}_{i=1}(x_{i}-y_{i}) \ln \left(\frac{y_{i}}{x_{i}} \right) }$.
				
			\item \label{exam:Back:Rn:negative:xyz} Suppose that $\mathcal{S}:= \{x,y,z\} \subseteq \left]0, +\infty\right[^{n}$ such that $x$,  $y$, and $ z$ are pairwise distinct. Let $p \in \mathcal{H} $.  Then $ p \in 	\overleftarrow{\CCO{}}_{f}(\mathcal{S}) $ if and only if $p=x + \alpha (y-x) +\beta (z-x) \in \left[0, +\infty\right[^{n}$, where $(\alpha, \beta)^{\intercal} \in \mathbb{R}^{2}$ solves the following equation
			\begin{align*}
			\begin{pmatrix}
			\sum^{n}_{i=1}(y_{i} -x_{i}) \ln(\frac{y_{i}}{x_{i}}) & 	\sum^{n}_{i=1}(z_{i} -x_{i}) \ln(\frac{y_{i}}{x_{i}})\\
				\sum^{n}_{i=1}(y_{i} -x_{i}) \ln(\frac{z_{i}}{x_{i}}) & 	\sum^{n}_{i=1}(z_{i} -x_{i}) \ln(\frac{z_{i}}{x_{i}})
			\end{pmatrix}
			\begin{pmatrix}
			\alpha\\
			\beta
			\end{pmatrix}
			=
			\begin{pmatrix}
			\sum^{n}_{i=1}y_{i} -x_{i} -x_{i} \ln(\frac{y_{i}}{x_{i}})\\
			\sum^{n}_{i=1} z_{i} -x_{i} -x_{i} \ln(\frac{z_{i}}{x_{i}})
			\end{pmatrix}.
			\end{align*}
			\item \label{exam:Back:Rn:negative:} Suppose that $\mathcal{H}=\mathbb{R}^{3}$ and that  $\mathcal{S}:= \{(1,1,1), (1,2,1), (1,1,2)\} $. Then
			\begin{align*}
				\overleftarrow{\CCO{}}_{f}(\mathcal{S}) = \left\{ (1,1,1) + \frac{1-\ln 2}{\ln 2} (0,1,0) +\frac{1-\ln 2}{\ln 2}  (0,0,1)   \right\}.
			\end{align*}
		\end{enumerate}
	\item \label{exam:Back:Rn:FD}	Suppose that $(\forall x \in \left[0, 1\right]^{n} )$ $f(x) =\sum^{n}_{i=1} x_{i} \ln( x_{i}) + (1-x_{i}) \ln (1-x_{i})$.
	\begin{enumerate}
	\item  \label{exam:Back:Rn:FD:xyz} Suppose that $\mathcal{S}:= \{x,y,z\} \subseteq \left]0, 1\right[^{n} $ such that $x$,  $y$, and $ z$ are pairwise distinct. Let $p \in \mathcal{H} $.  Then $ p \in 	\overleftarrow{\CCO{}}_{f}(\mathcal{S}) $ if and only if $p=x + \alpha (y-x) +\beta (z-x) \in \left[ 0, 1\right]^{n} $, where $(\alpha, \beta)^{\intercal}$ solves the following equation
	\begin{align*}
	\begin{pmatrix}
 \sum^{n}_{i=1} (y_{i} -x_{i} )  \ln \left(  \frac{x_{i}}{y_{i}} \frac{1-y_{i}}{1-x_{i}}  \right)  & 	\sum^{n}_{i=1} (z_{i} -x_{i} )  \ln \left(  \frac{x_{i}}{y_{i}} \frac{1-y_{i}}{1-x_{i}}  \right)\\
\sum^{n}_{i=1} (y_{i} -x_{i} )  \ln \left(  \frac{x_{i}}{z_{i}} \frac{1-z_{i}}{1-x_{i}}  \right) & 	\sum^{n}_{i=1} (z_{i} -x_{i} )  \ln \left(  \frac{x_{i}}{z_{i}} \frac{1-z_{i}}{1-x_{i}}  \right)
	\end{pmatrix}
	\begin{pmatrix}
	\alpha\\
	\beta
	\end{pmatrix}
	=
	\begin{pmatrix}
	\sum^{n}_{i=1} \ln \left(    \frac{1-y_{i}}{1-x_{i}}   \right) -x_{i}  \ln \left(  \frac{x_{i}}{y_{i}} \frac{1-y_{i}}{1-x_{i}}  \right)\\
	\sum^{n}_{i=1} \ln \left(    \frac{1-z_{i}}{1-x_{i}}   \right) -x_{i}  \ln \left(  \frac{x_{i}}{z_{i}} \frac{1-z_{i}}{1-x_{i}}  \right)
	\end{pmatrix}.
	\end{align*}
	\item  \label{exam:Back:Rn:FD:} Suppose that $\mathcal{H}=\mathbb{R}^{3}$ and that $\mathcal{S}:= \{(\frac{1}{4},\frac{1}{4},\frac{1}{4}), (\frac{1}{4},\frac{1}{2},\frac{1}{4}), (\frac{1}{4},\frac{1}{4},\frac{1}{2})\} $. Then
	\begin{align*}
	\overleftarrow{\CCO{}}_{f}(\mathcal{S}) = \left\{ \left(\frac{1}{4},\frac{1}{4},\frac{1}{4}\right) +\frac{3\ln 3 -4 \ln 2 }{\ln 3} \left(0,\frac{1}{4},0\right) +\frac{3\ln 3 -4 \ln 2 }{\ln 3}  \left(0,0,\frac{1}{4}\right)   \right\}.
	\end{align*}
\end{enumerate}	
	\end{enumerate}
\end{example}

\begin{proof}
	\cref{exam:Back:Rn:negative}:
	According to \cite[Proposition~3.5]{BB1997Legendre} and \cref{defn:BregmanDistance},
		\begin{align}\label{eq:exam:back:Rn}
 	\left(\forall \{u,v\} \subseteq \left[0, +\infty\right[^{n}  \right)
	\quad D_{f} (u,v) =\sum^{n}_{i=1}   u_{i} (\ln (u_{i}) -\ln (v_{i})) +v_{i} -u_{i} .
	\end{align}
	
	\cref{exam:Back:Rn:negative:xy}: This is clear from  \cref{eq:exam:back:Rn} and \cref{defn:CCS:Bregman:left}\cref{defn:CCS:Bregman:left:}.
	
	\cref{exam:Back:Rn:negative:xyz}: In view of \cref{defn:CCS:Bregman:left}\cref{defn:CCS:Bregman:left:},  $ p \in 	\overleftarrow{\CCO{}}_{f}(\mathcal{S}) $ if and only if $p=x + \alpha (y-x) +\beta (z-x) \in \left[0, +\infty\right[^{n}$ such that
	\begin{align*}
&	\D_{f} (p, x) =\D_{f} (p, y)=\D_{f} (p, z) \\
	\Leftrightarrow &
	\begin{cases}
	\sum^{n}_{i=1} p_{i} \ln (\frac{y_{i}}{x_{i}}) = \sum^{n}_{i=1}(y_{i} -x_{i})\\
	\sum^{n}_{i=1} p_{i} \ln (\frac{z_{i}}{x_{i}}) = \sum^{n}_{i=1}(z_{i} -x_{i})
	\end{cases}\\
	 \Leftrightarrow &
	\begin{cases}
	\alpha\sum^{n}_{i=1}(y_{i} -x_{i}) \ln(\frac{y_{i}}{x_{i}})  +\beta 	\sum^{n}_{i=1}(z_{i} -x_{i}) \ln(\frac{y_{i}}{x_{i}}) = \sum^{n}_{i=1}\left(y_{i} -x_{i} -x_{i} \ln(\frac{y_{i}}{x_{i}}) \right)\\
	\alpha   \sum^{n}_{i=1}(y_{i} -x_{i}) \ln(\frac{z_{i}}{x_{i}})   +   \beta 	\sum^{n}_{i=1}(z_{i} -x_{i}) \ln(\frac{z_{i}}{x_{i}})   = 	\sum^{n}_{i=1}\left(z_{i} -x_{i} -x_{i} \ln(\frac{z_{i}}{x_{i}}) \right)
	\end{cases},	
	\end{align*}
	 which deduces the required result.
	
	\cref{exam:Back:Rn:negative:}: The desired result follows clearly from \cref{exam:Back:Rn:negative:xyz} above.
	
		\cref{exam:Back:Rn:FD:xyz}: Employing \cref{defn:CCS:Bregman:left}\cref{defn:CCS:Bregman:left:}, \cite[Proposition~3.5]{BB1997Legendre} and \cref{eq:exam:back:R:negative:xyz:D}, we observe that in this case, $ p \in 	\overleftarrow{\CCO{}}_{f}(\mathcal{S}) $ if and only if $p=x + \alpha (y-x) +\beta (z-x) \in \left[0, 1\right]^{n}$ such that
	\begin{align*}
	&D_{f} (p,x) =D_{f} (p,y)=D_{f} (p,z) \\
	\Leftrightarrow &\begin{cases}
		\sum^{n}_{i=1} p_{i}  \ln (\frac{p_{i}}{x_{i}})  + (1 -p_{i})\ln (\frac{1-p_{i}}{1-x_{i}})  = \sum^{n}_{i=1} p_{i}  \ln (\frac{p_{i}}{y_{i}})  + (1 -p_{i})\ln (\frac{1-p_{i}}{1-y_{i}}) \\
			\sum^{n}_{i=1} p_{i}  \ln (\frac{p_{i}}{x_{i}})  + (1 -p_{i})\ln (\frac{1-p_{i}}{1-x_{i}})  = \sum^{n}_{i=1} p_{i}  \ln (\frac{p_{i}}{z_{i}})  + (1 -p_{i})\ln (\frac{1-p_{i}}{1-z_{i}})
	\end{cases}\\
	\Leftrightarrow &\begin{cases}
	\sum^{n}_{i=1} p_{i}  \ln \left(  \frac{x_{i}}{y_{i}} \frac{1-y_{i}}{1-x_{i}}  \right) = \sum^{n}_{i=1} \ln \left(    \frac{1-y_{i}}{1-x_{i}}  \right) \\
		\sum^{n}_{i=1} p_{i}  \ln \left(  \frac{x_{i}}{z_{i}} \frac{1-z_{i}}{1-x_{i}}  \right) = \sum^{n}_{i=1} \ln \left(    \frac{1-z_{i}}{1-x_{i}}  \right)
		\end{cases}\\
		\Leftrightarrow &\begin{cases}
		\alpha \sum^{n}_{i=1} (y_{i} -x_{i} )  \ln \left(  \frac{x_{i}}{y_{i}} \frac{1-y_{i}}{1-x_{i}}  \right)  +
		\beta \sum^{n}_{i=1} (z_{i} -x_{i} )  \ln \left(  \frac{x_{i}}{y_{i}} \frac{1-y_{i}}{1-x_{i}}  \right)
		 = \sum^{n}_{i=1} \ln \left(    \frac{1-y_{i}}{1-x_{i}}   \right) -x_{i}  \ln \left(  \frac{x_{i}}{y_{i}} \frac{1-y_{i}}{1-x_{i}}  \right)\\
		 	\alpha \sum^{n}_{i=1} (y_{i} -x_{i} )  \ln \left(  \frac{x_{i}}{z_{i}} \frac{1-z_{i}}{1-x_{i}}  \right)  +
		 \beta \sum^{n}_{i=1} (z_{i} -x_{i} )  \ln \left(  \frac{x_{i}}{z_{i}} \frac{1-z_{i}}{1-x_{i}}  \right)
		 = \sum^{n}_{i=1} \ln \left(    \frac{1-z_{i}}{1-x_{i}}   \right) -x_{i}  \ln \left(  \frac{x_{i}}{z_{i}} \frac{1-z_{i}}{1-x_{i}}  \right)
			\end{cases},
	\end{align*}
	which implies the required result.

	\cref{exam:Back:Rn:FD:}: This follows from 	\cref{exam:Back:Rn:FD:xyz} and some easy algebra.
\end{proof}

\begin{example} \label{exam:back:R}
	Suppose that $\mathcal{H}=\mathbb{R}$. Then the following statements hold.
	\begin{enumerate}
		\item  \label{exam:back:R:negative} Suppose that  $(\forall x \in \left[0, +\infty\right[ )$ $f(x) =x \ln( x) -x$, and that $\mathcal{S}:= \{x,y,z\} \subseteq \left]0, +\infty\right[$ such that $x$,  $y$, and $ z$ are pairwise distinct. Then $\overleftarrow{\CCO{}}_{f}(\mathcal{S}) = \varnothing$.

		\item  \label{exam:back:R:FD} Suppose that $(\forall x \in \left[0, 1\right] )$ $f(x) =x \ln( x) + (1-x) \ln (1-x)$, and that $\mathcal{S}:= \{x,y,z\} \subseteq  \left]0, 1\right[$ such that $x$,  $y$, and $ z$ are pairwise distinct. Then $\overleftarrow{\CCO{}}_{f}(\mathcal{S}) = \varnothing$.

	\end{enumerate}
	
\end{example}

\begin{proof}
	\cref{exam:back:R:negative}:
	Let $p \in \left[0, +\infty\right[ $. According to \cref{eq:exam:back:Rn} with $n=1$,  it is easy to see that
	\begin{align} \label{eq:exam:back:R:negative:xyz}
	D_{f} (p,x) =D_{f} (p,y)=D_{f} (p,z) \Leftrightarrow p =\frac{y-x}{\ln (y) -\ln (x)}  =\frac{z-x}{\ln (z) -\ln (x)}.
	\end{align}
	Set $g:\left]0, +\infty\right[ \to \mathbb{R} : t \mapsto \frac{t-x}{\ln(t) -\ln(x)} $. Then   clearly, $\left(\forall t \in \left]0, +\infty\right[  \smallsetminus \{x\} \right) $ $g(t) \in \mathbb{R}_{++}$,
	\begin{align*}
	\left(\forall t \in \left]0, +\infty\right[  \smallsetminus \{x\} \right) \quad g'(t) =\frac{(\frac{x}{t} -\ln \frac{x}{t}) -1}{( \ln (t) -\ln (x))^{2} } >0,
	\end{align*}
	which implies that $g$ is strictly increasing. This combined with the assumption, $x$,  $y$, and $ z$ are pairwise distinct, deduces that there is no $p \in \left[0, +\infty\right[$ satisfying \cref{eq:exam:back:R:negative:xyz}. Therefore, by \cref{defn:CCS:Bregman:left}\cref{defn:CCS:Bregman:left:}, $\overleftarrow{\CCO{}}_{f}(\mathcal{S}) = \varnothing$.
	
	\cref{exam:back:R:FD}: Now, in view of \cref{defn:BregmanDistance},
	\begin{align} \label{eq:exam:back:R:negative:xyz:D}
	(\forall \{u,v\} \subseteq \left]0, 1\right[ ) \quad D_{f} (u,v) =u  \ln (\frac{u}{v})  + (1 -u)\ln (\frac{1-u}{1-v}),
	\end{align}
	and
	\begin{subequations}\label{eq:exam:back:R:FD}
		\begin{align}
		&D_{f} (p,x) =D_{f} (p,y)=D_{f} (p,z) \\
		\Leftrightarrow &p = \left(\ln \frac{1-y}{1-x} \right) \left( \ln \frac{1-y}{1-x} -\ln \frac{y}{x} \right)^{-1}  =\left(\ln \frac{1-z}{1-x} \right) \left( \ln \frac{1-z}{1-x} -\ln \frac{z}{x} \right)^{-1}.
		\end{align}
	\end{subequations}
	Set $g:\left]0, 1\right[  \to \mathbb{R} : t \mapsto \left(\ln \frac{1-t}{1-x} \right) \left( \ln \frac{1-t}{1-x} -\ln \frac{t}{x} \right)^{-1} $. Then  some  algebra deduces that $(\forall t \in\left]0, 1\right[ \smallsetminus \{x\}  )$ $g(t) \in \left[ 0,1 \right]$ and $g'(t) >0$, which, combining with the assumption, $x$,  $y$, and $ z$ are pairwise distinct, implies that  there exists no $p \in \left[0, 1\right] $ satisfying \cref{eq:exam:back:R:FD}. Therefore, by \cref{defn:CCS:Bregman:left}\cref{defn:CCS:Bregman:left:}, $\overleftarrow{\CCO{}}_{f}(\mathcal{S}) = \varnothing$.
\end{proof}

\subsection*{Explicit  formula of  backward Bregman pseudo-circumcenters}

\begin{proposition} \label{prop:formualCCS:matrixEQ}
	Set
	\begin{align*}
	&A:=
	\begin{pmatrix}
	\Innp{\nabla f (q_{1}) -\nabla f (q_{0}), \nabla f (q_{1}) -\nabla f (q_{0}) } & \ldots& \Innp{\nabla f (q_{1}) -\nabla f (q_{0}), \nabla f (q_{m}) -\nabla f (q_{0}) }\\
	\vdots & \ldots & \vdots\\
	\Innp{\nabla f (q_{m}) -\nabla f (q_{0}), \nabla f (q_{1}) -\nabla f (q_{0})} & \ldots& \Innp{\nabla f (q_{m}) -\nabla f (q_{0}), \nabla f (q_{m}) -\nabla f (q_{0}) }\\
	\end{pmatrix},\\
	&B:= \begin{pmatrix}
	\beta_{1} - \Innp{\nabla f (q_{1}) -\nabla f (q_{0}), \nabla f (q_{0}) }\\
	\vdots\\
	\beta_{m} - \Innp{\nabla f (q_{m}) -\nabla f (q_{0}), \nabla f (q_{0}) }
	\end{pmatrix}
	\text{ and } \\
	& \Omega := \left\{   \nabla f (q_{0}) +\sum^{m}_{i=1} \alpha_{i} \left( \nabla f (q_{i}) -\nabla f (q_{0}),\right)   ~:~ (\alpha_{1}, \ldots, \alpha_{m} )^{\intercal}  \in \mathbb{R}^{m} \text{ s.t. } A\alpha =B    \right\}.
	\end{align*}
	Then $  \overleftarrow{\CCO{}}^{ps}_{f}(\mathcal{S})  = \Omega \cap \dom f$.
\end{proposition}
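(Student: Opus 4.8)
The plan is to argue exactly as in the proof of \cref{prop:formualCCS:Pleft:matrixEQ}, with the points $q_{i}$ replaced throughout by their gradient images $\nabla f(q_{i})$. First I would unfold \cref{defn:CCS:Bregman:left}\cref{defn:CCS:Bregman:left:ps}: a point $p$ lies in $\overleftarrow{\CCO{}}^{ps}_{f}(\mathcal{S})$ if and only if $p \in \aff(\nabla f(\mathcal{S}))$ and $p \in \overleftarrow{E}_{f}(\mathcal{S})$, and I would note that the membership $p \in \overleftarrow{E}_{f}(\mathcal{S})$ already forces $p \in \dom f$ by \cref{eq:Ef}.

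Next, since $\nabla f(\mathcal{S}) = \{\nabla f(q_{0}), \nabla f(q_{1}), \ldots, \nabla f(q_{m})\}$ is a finite set, its affine hull admits the standard parametrization $\aff(\nabla f(\mathcal{S})) = \{\nabla f(q_{0}) + \sum_{i=1}^{m} \alpha_{i}(\nabla f(q_{i}) - \nabla f(q_{0})) : (\alpha_{1}, \ldots, \alpha_{m})^{\intercal} \in \mathbb{R}^{m}\}$. On the other hand, by \cref{lemma:xinEp1pm} the condition $p \in \overleftarrow{E}_{f}(\mathcal{S})$ is equivalent to $p \in \dom f$ together with the $m$ linear equations $\innp{\nabla f(q_{i}) - \nabla f(q_{0}), p} = \beta_{i}$, $i \in \I$. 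Combining the two descriptions, $p \in \overleftarrow{\CCO{}}^{ps}_{f}(\mathcal{S})$ if and only if there exists $\alpha = (\alpha_{1}, \ldots, \alpha_{m})^{\intercal} \in \mathbb{R}^{m}$ with $p = \nabla f(q_{0}) + \sum_{j=1}^{m} \alpha_{j}(\nabla f(q_{j}) - \nabla f(q_{0})) \in \dom f$ and, for every $i \in \I$, $\innp{\nabla f(q_{i}) - \nabla f(q_{0}),\, \nabla f(q_{0}) + \sum_{j=1}^{m} \alpha_{j}(\nabla f(q_{j}) - \nabla f(q_{0}))} = \beta_{i}$.

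Finally, I would expand each of these inner products by bilinearity and move the constant term $\innp{\nabla f(q_{i}) - \nabla f(q_{0}), \nabla f(q_{0})}$ to the right-hand side; the resulting system in $\alpha$ is precisely $A\alpha = B$ with $A$ and $B$ as in the statement. Hence $p \in \overleftarrow{\CCO{}}^{ps}_{f}(\mathcal{S})$ exactly when $p \in \dom f$ and $p \in \Omega$, that is, $\overleftarrow{\CCO{}}^{ps}_{f}(\mathcal{S}) = \Omega \cap \dom f$. There is no genuine obstacle here: the argument is a verbatim analogue of \cref{prop:formualCCS:Pleft:matrixEQ}, and the only points requiring a little care are retaining the $\cap\,\dom f$ constraint at every stage (since $\aff(\nabla f(\mathcal{S}))$ need not be contained in $\dom f$) and invoking the standard parametrization of the affine hull of finitely many points.
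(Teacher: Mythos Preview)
Your proposal is correct and follows exactly the approach the paper indicates: the paper's own proof simply states that the argument is similar to that of \cref{prop:formualCCS:Pleft:matrixEQ}, using \cref{defn:CCS:Bregman:left}\cref{defn:CCS:Bregman:left:ps} and \cref{lemma:xinEp1pm}, which is precisely what you do. Your write-up is in fact more detailed than the paper's, and your remark about retaining the $\cap\,\dom f$ constraint is apt.
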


\begin{proof}
	The proof is similar to that of \cref{prop:formualCCS:Pleft:matrixEQ}, but this time we exploit
 \cref{defn:CCS:Bregman:left}\cref{defn:CCS:Bregman:left:ps}  and \cref{lemma:xinEp1pm}.
\end{proof}

\begin{example}\label{exam:Backpseudo:Rn}
	Suppose that $\mathcal{H} =\mathbb{R}^{n}$.  Denote by  $(\forall x \in \mathcal{H} )$ $x= (x_{i})^{n}_{i=1} $.
	Suppose that  $(\forall x \in \left[0, +\infty\right[^{n} )$ $f(x) =\sum^{n}_{i=1} x_{i} \ln( x_{i}) -x_{i}$.
	and  that $\mathcal{S}:= \{x,y \} \subseteq \left]0, +\infty\right[^{n}$ such that $x$ and $y$  are pairwise distinct. Let $p \in \mathcal{H} $.  Then $ p \in 	 \overleftarrow{\CCO{}}^{ps}_{f}(\mathcal{S}) $ if and only if $p=\Big( \alpha \ln (x_{1}) +  (1-\alpha) \ln (y_{1}) , \ldots  ,\alpha \ln (x_{n}) +  (1-\alpha) \ln (y_{n}) \Big)^{\intercal}  \in \left[0, +\infty\right[^{n}$,   where $\alpha:= \frac{ \sum^{n}_{i=1} x_{i}-y_{i} + \ln (y_{i}) \ln \left(\frac{y_{i}}{x_{i}} \right) }{\sum^{n}_{i=1}  \left(\ln \frac{y_{i}}{x_{i}} \right)^{2 }}$.
\end{example}
\begin{proof}
	This is clear  by substituting $m=1$, $q_{1}=x$ and $q_{0} =y$ in \cref{prop:formualCCS:matrixEQ}.
\end{proof}

\begin{theorem} \label{theorem:formualCCS}
	Suppose that $\overleftarrow{E}_{f}(\mathcal{S}) \neq \varnothing$.  Let $x \in \overleftarrow{E}_{f}(\mathcal{S}) $. Then the following statements hold.
	\begin{enumerate}
		\item \label{theorem:formualCCS:id}  $ \overleftarrow{\CCO{}}^{ps}_{f}(\mathcal{S})  = \aff ( \nabla f (\mathcal{S}) ) \cap \overleftarrow{E}_{f}(\mathcal{S})  = \aff ( \nabla f (\mathcal{S}) ) \cap \dom f \cap \left( x+L^{\perp} \right)$.

		 \item \label{theorem:formualCCS:EucP}  Suppose that  $ \Pro_{\aff(\nabla f( \mathcal{S} ) )}  (x ) \in \dom f$ $($e.g., $\aff(\nabla f( \mathcal{S} ) ) \subseteq \dom f$ or $\dom f =\mathcal{H}$$)$.  Then  $\overleftarrow{\CCO{}}^{ps}_{f}(\mathcal{S})=\Pro_{\aff(\nabla f( \mathcal{S} ) )} (x)  $.
	\end{enumerate}
\end{theorem}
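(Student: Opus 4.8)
The plan is to reduce both parts to the structural lemmas already at hand, with the only genuinely external ingredient being the elementary characterization of Euclidean projections onto closed affine subspaces.

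For \cref{theorem:formualCCS:id}, I would first note that the first equality $\overleftarrow{\CCO{}}^{ps}_{f}(\mathcal{S}) = \aff(\nabla f(\mathcal{S})) \cap \overleftarrow{E}_{f}(\mathcal{S})$ is nothing but \cref{defn:CCS:Bregman:left}\cref{defn:CCS:Bregman:left:ps} applied with $K = \mathcal{S}$, after observing that the set $\overleftarrow{E}_{f}(\mathcal{S})$ from that definition coincides with the one in \cref{eq:Ef}. For the second equality, since $x \in \overleftarrow{E}_{f}(\mathcal{S})$ by hypothesis, \cref{lemma:x-y:Lperp} gives $\overleftarrow{E}_{f}(\mathcal{S}) = \dom f \cap (x + L^{\perp})$; plugging this in yields $\overleftarrow{\CCO{}}^{ps}_{f}(\mathcal{S}) = \aff(\nabla f(\mathcal{S})) \cap \dom f \cap (x + L^{\perp})$.

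For \cref{theorem:formualCCS:EucP}, set $U := \aff(\nabla f(\mathcal{S}))$. Since $\nabla f(\mathcal{S}) = \{\nabla f(q_{0}), \ldots, \nabla f(q_{m})\}$ is finite, $U$ is a finite-dimensional, hence closed, affine subspace whose parallel linear subspace is exactly $L$ by \cref{eq:L}; in particular $\Pro_{U}$ is single-valued and everywhere defined. The key step is the standard fact that for such $U$ one has $z = \Pro_{U}(x) \Leftrightarrow [z \in U \text{ and } x - z \in L^{\perp}]$, equivalently $U \cap (x + L^{\perp}) = \{\Pro_{U}(x)\}$. Combining with \cref{theorem:formualCCS:id} gives $\overleftarrow{\CCO{}}^{ps}_{f}(\mathcal{S}) = \{\Pro_{U}(x)\} \cap \dom f$, and the hypothesis $\Pro_{U}(x) \in \dom f$ (which holds when $\aff(\nabla f(\mathcal{S})) \subseteq \dom f$, since then $\Pro_{U}(x) \in U \subseteq \dom f$, and trivially when $\dom f = \mathcal{H}$) collapses this to $\{\Pro_{U}(x)\}$, i.e.\ $\overleftarrow{\CCO{}}^{ps}_{f}(\mathcal{S}) = \Pro_{\aff(\nabla f(\mathcal{S}))}(x)$ under the singleton-identification convention of the Notation section.

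I do not expect any real obstacle here: the argument is essentially bookkeeping on top of \cref{lemma:x-y:Lperp} and \cref{defn:CCS:Bregman:left}. The one point deserving a line of care is that $U$ must be closed for $\Pro_{U}$ to behave well, which follows since $U$ is the affine hull of finitely many points. If one wished to avoid citing the projection characterization outright, the identity $U \cap (x + L^{\perp}) = \{\Pro_{U}(x)\}$ could instead be extracted from \cref{thm:LinIndpPformula} by first passing to an affinely independent subset of $\nabla f(\mathcal{S})$ with the same affine hull.
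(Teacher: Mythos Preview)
Your proposal is correct and follows essentially the same route as the paper: part \cref{theorem:formualCCS:id} is exactly the combination of \cref{defn:CCS:Bregman:left}\cref{defn:CCS:Bregman:left:ps} with \cref{lemma:x-y:Lperp}, and for part \cref{theorem:formualCCS:EucP} both you and the paper use that $x - \Pro_{U}(x) \in L^{\perp}$ where $U = \aff(\nabla f(\mathcal{S}))$ has direction $L$. The only cosmetic difference is that the paper verifies this orthogonality by the explicit translation/decomposition $x - \Pro_{\nabla f(q_{0})+L}(x) = \Pro_{L^{\perp}}(x-\nabla f(q_{0}))$ and then invokes \cref{lemma:CCSsingletonOrempty} for uniqueness, whereas you cite the standard characterization $U \cap (x+L^{\perp}) = \{\Pro_{U}(x)\}$ directly, which packages membership and uniqueness together; either way the argument is the same.
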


\begin{proof}
	\cref{theorem:formualCCS:id}: This is a direct result of \cref{defn:CCS:Bregman:left}\cref{defn:CCS:Bregman:left:ps}  and \cref{lemma:x-y:Lperp}.

	\cref{theorem:formualCCS:EucP}: Notice that $\Pro_{\aff(\nabla f( \mathcal{S} ) )} (x) \in \aff(\nabla f( \mathcal{S} ) ) \cap   \dom f$.
Furthermore,
	\begin{align*}
	x - \Pro_{\aff(\nabla f( \mathcal{S} ) )} (x) & = x - \Pro_{\nabla f(q_{0}) +L} (x) \quad (\text{by \cref{eq:L}})\\
	&= x-\nabla f(q_{0})  -  \Pro_{ L} (x-\nabla f(q_{0}) ) \quad (\text{by \cite[Proposition~3.19]{BC2017}})\\
	&= \Pro_{ L^{\perp}} (x-\nabla f(q_{0})  ) \in L^{\perp} .  \quad (\text{by  \cite[Theorem~5.8]{D2012}})
	\end{align*}
Altogether,  $\Pro_{\aff(\nabla f( \mathcal{S} ) )} (x) \in \aff ( \nabla f (\mathcal{S}) ) \cap \dom f \cap \left( x+L^{\perp} \right)$, which combining with  \cref{theorem:formualCCS:id} above and \cref{lemma:CCSsingletonOrempty} to deduce the required result.
\end{proof}

\begin{corollary} \label{cor:equi:Ef:CCfS}
	Suppose that $ \Pro_{\aff(\nabla f( \mathcal{S} ) )}  (\overleftarrow{E}_{f}(\mathcal{S}) ) \subseteq \dom f$ $($e.g., $\dom f =\mathcal{H}$$)$.  Then $\overleftarrow{E}_{f}(\mathcal{S})  \neq \varnothing$ if and only if $\overleftarrow{\CCO{}}^{ps}_{f}(\mathcal{S}) \neq \varnothing$.
\end{corollary}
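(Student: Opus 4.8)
The plan is to deduce both implications directly from \cref{defn:CCS:Bregman:left}\cref{defn:CCS:Bregman:left:ps} and \cref{theorem:formualCCS}, with essentially no computation. The backward implication is immediate: by \cref{defn:CCS:Bregman:left}\cref{defn:CCS:Bregman:left:ps} we have $\overleftarrow{\CCO{}}^{ps}_{f}(\mathcal{S}) = \aff(\nabla f(\mathcal{S})) \cap \overleftarrow{E}_{f}(\mathcal{S})$, so if $\overleftarrow{\CCO{}}^{ps}_{f}(\mathcal{S}) \neq \varnothing$ then a fortiori $\overleftarrow{E}_{f}(\mathcal{S}) \neq \varnothing$. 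This direction needs no hypothesis.

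For the forward implication, suppose $\overleftarrow{E}_{f}(\mathcal{S}) \neq \varnothing$ and fix some $x \in \overleftarrow{E}_{f}(\mathcal{S})$. The standing hypothesis $\Pro_{\aff(\nabla f(\mathcal{S}))}(\overleftarrow{E}_{f}(\mathcal{S})) \subseteq \dom f$ applied to this particular $x$ gives exactly $\Pro_{\aff(\nabla f(\mathcal{S}))}(x) \in \dom f$, which is precisely the extra assumption required to invoke \cref{theorem:formualCCS}\cref{theorem:formualCCS:EucP}. That result then yields $\overleftarrow{\CCO{}}^{ps}_{f}(\mathcal{S}) = \Pro_{\aff(\nabla f(\mathcal{S}))}(x)$, which is a singleton and in particular nonempty. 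Combining the two implications finishes the proof.

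I do not anticipate a genuine obstacle here; the only point requiring minor care is the logical bookkeeping of the hypothesis, namely that $\Pro_{\aff(\nabla f(\mathcal{S}))}(\overleftarrow{E}_{f}(\mathcal{S})) \subseteq \dom f$ is the right uniform statement guaranteeing that \cref{theorem:formualCCS}\cref{theorem:formualCCS:EucP} can be applied at \emph{every} (equivalently, \emph{some}) point of $\overleftarrow{E}_{f}(\mathcal{S})$, and that the listed special case $\dom f = \mathcal{H}$ trivially satisfies it since then $\dom f = \mathcal{H}$ contains everything. No new estimates or constructions are needed beyond citing \cref{defn:CCS:Bregman:left} and \cref{theorem:formualCCS}.
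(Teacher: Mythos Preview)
Your proposal is correct and follows exactly the paper's approach: the paper's proof simply says ``It is easy to verify the equivalence by \cref{defn:CCS:Bregman:left}\cref{defn:CCS:Bregman:left:ps} and \cref{theorem:formualCCS}\cref{theorem:formualCCS:EucP},'' and you have spelled out precisely those two citations in the intended way.
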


\begin{proof}
	It is easy to verify the equivalence by \cref{defn:CCS:Bregman:left}\cref{defn:CCS:Bregman:left:ps} and \cref{theorem:formualCCS}\cref{theorem:formualCCS:EucP}.
\end{proof}

The following result provides an explicit formula for backward Bregman pseudo-circumcenters.
\begin{theorem} \label{thm:unique:LinIndpPformula}
Suppose that $\nabla f(q_{0}), \nabla f(q_{1}), \ldots, \nabla f(q_{m})$ are affinely independent and that $\aff ( \nabla f (\mathcal{S}) ) \subseteq \dom f$.
	Then $\overleftarrow{\CCO{}}^{ps}_{f}(\mathcal{S}) \neq \varnothing$.
	Moreover,
	\begin{align} \label{eq:thm:unique:LinIndpPformula}
	\overleftarrow{\CCO{}}^{ps}_{f}(\mathcal{S})  = \nabla f(q_{0})+\alpha_{1} \left(\nabla f(q_{1})-\nabla f(q_{0}) \right)+ \cdots+ \alpha_{m} \left(\nabla f(q_{m})-\nabla f(q_{0})\right),
		\end{align}
		where
		\begin{align*}
		\begin{pmatrix}
		\alpha_{1} \\
		\vdots\\
		\alpha_{m } \\
		\end{pmatrix}
		:=
	G( \nabla f(q_{1})-\nabla f(q_{0}),\ldots,\nabla f(q_{m})-\nabla f(q_{0}))^{-1}
	\begin{pmatrix}
	\beta_{1} -\innp{\nabla f(q_{0}),\nabla f(q_{1})-\nabla f(q_{0})}\\
	\vdots\\
	\beta_{m}-\innp{\nabla f(q_{0}),\nabla f(q_{m})-\nabla f(q_{0})} \\
	\end{pmatrix}.
	\end{align*}
\end{theorem}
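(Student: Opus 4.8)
The plan is to read the result off directly from \cref{lemma:CCSpseudoB}, \cref{lemma:xinEp1pm}, and \cref{defn:CCS:Bregman:left}\cref{defn:CCS:Bregman:left:ps}: the affine independence hypothesis produces a unique candidate point, and the domain hypothesis certifies that this candidate actually lies in $\overleftarrow{E}_{f}(\mathcal{S})$. So the argument amounts to a double inclusion, with essentially no new computation.

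First I would introduce
\begin{align*}
\Omega := \left\{ x \in \aff ( \nabla f (\mathcal{S}) ) ~:~ (\forall i \in \I) ~ \innp{x, \nabla f(q_{i}) - \nabla f(q_{0})} = \beta_{i} \right\},
\end{align*}
and let $p$ be the point displayed in \cref{lemma:CCSpseudoB}. Since $\nabla f(q_{0}), \nabla f(q_{1}), \ldots, \nabla f(q_{m})$ are affinely independent, \cref{lemma:CCSpseudoB} (which itself is \cref{thm:LinIndpPformula} applied to $z_{i} = \nabla f(q_{i})$) guarantees that the Gram matrix in the statement is invertible, that $p$ is well-defined, and that $\Omega = \{p\}$.

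Next I would check $p \in \overleftarrow{\CCO{}}^{ps}_{f}(\mathcal{S})$. By construction $p \in \aff(\nabla f(\mathcal{S}))$, and the hypothesis $\aff(\nabla f(\mathcal{S})) \subseteq \dom f$ then forces $p \in \dom f$. Since $p \in \Omega$ satisfies $\innp{\nabla f(q_{i}) - \nabla f(q_{0}), p} = \beta_{i}$ for every $i \in \I$, \cref{lemma:xinEp1pm} gives $p \in \overleftarrow{E}_{f}(\mathcal{S})$. Hence $p \in \aff(\nabla f(\mathcal{S})) \cap \overleftarrow{E}_{f}(\mathcal{S}) = \overleftarrow{\CCO{}}^{ps}_{f}(\mathcal{S})$ by \cref{defn:CCS:Bregman:left}\cref{defn:CCS:Bregman:left:ps}, so $\overleftarrow{\CCO{}}^{ps}_{f}(\mathcal{S}) \neq \varnothing$. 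For the reverse inclusion, take any $x \in \overleftarrow{\CCO{}}^{ps}_{f}(\mathcal{S}) = \aff(\nabla f(\mathcal{S})) \cap \overleftarrow{E}_{f}(\mathcal{S})$; then $x \in \aff(\nabla f(\mathcal{S}))$ and, by \cref{lemma:xinEp1pm}, $\innp{\nabla f(q_{i}) - \nabla f(q_{0}), x} = \beta_{i}$ for every $i \in \I$, so $x \in \Omega = \{p\}$, i.e.\ $x = p$. (One may also simply invoke \cref{lemma:CCSsingletonOrempty} to know a priori that $\overleftarrow{\CCO{}}^{ps}_{f}(\mathcal{S})$ is at most a singleton.) Combining the two inclusions yields $\overleftarrow{\CCO{}}^{ps}_{f}(\mathcal{S}) = \{p\}$, which is \cref{eq:thm:unique:LinIndpPformula} under the convention identifying a singleton with its unique element.

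I do not expect a genuine obstacle here: the substance has already been packaged into \cref{thm:LinIndpPformula}/\cref{lemma:CCSpseudoB} and into \cref{lemma:xinEp1pm}. The one step that must not be glossed over is the use of $\aff(\nabla f(\mathcal{S})) \subseteq \dom f$: without it the candidate $p$ still lies in the right affine hull and solves all the linear equations, but membership in $\overleftarrow{E}_{f}(\mathcal{S})$ additionally demands $p \in \dom f$, and this is exactly where the domain assumption is consumed — and also the only place where it is needed.
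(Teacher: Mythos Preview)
Your argument is correct and essentially the same as the paper's: the paper invokes \cref{prop:formualCCS:matrixEQ} (which packages \cref{defn:CCS:Bregman:left}\cref{defn:CCS:Bregman:left:ps} and \cref{lemma:xinEp1pm}) together with invertibility of the Gram matrix, whereas you cite \cref{lemma:CCSpseudoB}, \cref{lemma:xinEp1pm}, and the definition directly; both routes amount to identifying the unique solution of the linear system and using $\aff(\nabla f(\mathcal{S}))\subseteq\dom f$ to place it in $\overleftarrow{E}_{f}(\mathcal{S})$.
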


\begin{proof}
	According to the assumption and \cite[Fact~2.8]{BOyW2018},
	$\nabla f(q_{1})-\nabla f(q_{0}), \ldots, \nabla f(q_{m})-\nabla f(q_{0}) $ are linearly independent.
	Then by \cite[Fact~2.13]{BOyW2018}, the Gram matrix $G\left(\nabla f(q_{1})-\nabla f(q_{0}), \ldots, \nabla f(q_{m})-\nabla f(q_{0}) \right)$ is invertible.
	Therefore, the required result follows immediately from  \cref{prop:formualCCS:matrixEQ}.	
\end{proof}

\section{Forward Bregman circumcenters of finite sets}\label{sec:ForwardBregmancircumcenters}
Throughout this section,  suppose that $f \in \Gamma_{0} (\mathcal{H}) $  with $\inte \dom f \neq \varnothing$ and that $f$ is G\^ateaux  differentiable  on $\inte \dom f$, and  that
\begin{empheq}[box=\mybluebox]{equation}   \label{eq:Sfor}
\mathcal{S}:= \{p_{0}, p_{1}, \ldots, p_{m} \} \subseteq  \dom f \text{ and } \mathcal{S} \text{ is nonempty}.
\end{empheq}
Denote by  $\I := \{1, \ldots, m\}$,
\begin{empheq}[box=\mybluebox]{equation}  \label{eq:Eright}
\overrightarrow{E}_{f}(\mathcal{S} ):= \{ y \in \inte \dom f~:~  \D_{f} (p_{0},y) =\D_{f} (p_{1},y) =\cdots =\D_{f} (p_{m},y) \},
\end{empheq}
\begin{align}  \label{eq:M}
M :=\aff ( \mathcal{S} ) - \aff ( \mathcal{S} ) = \spn \{ p_{1} -p_{0}, \ldots, p_{m} -p_{0} \},
\end{align}
and
\begin{align} \label{eq:eta}
(\forall i \in \I )\quad \eta_{i}:= f(p_{i}) -f (p_{0}).
\end{align}

\subsection*{Forward Bregman (pseudo-)circumcenter operators}
The following lemmas are helpful in this section.
\begin{lemma} \label{lemma:xinEp1pm:right}
We have $	y \in  \overrightarrow{E}_{f}(\mathcal{S} ) \Leftrightarrow  \left[ y \in \inte \dom f \text{ and } (\forall i \in \I) \quad \innp{\nabla f(y) , p_{i }-p_{0}} = \eta_{i}  \right]$ .
\end{lemma}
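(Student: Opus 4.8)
The plan is to mimic the proof of \cref{lemma:xinEp1pm} almost verbatim, with the roles of the two arguments of $\D_f$ interchanged so that now the ``moving'' point sits in the second slot. First I would fix $y \in \inte \dom f$; this costs nothing because both sides of the claimed equivalence already require $y \in \inte \dom f$ (on the left via \cref{eq:Eright}, on the right explicitly), so for $y \notin \inte \dom f$ the statement is just ``false $\Leftrightarrow$ false''.

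With $y \in \inte \dom f$ fixed, using \cref{defn:BregmanDistance} I would expand, for every pair $\{i,j\} \subseteq \{0,1,\dots,m\}$,
\[
\D_f(p_i,y) - \D_f(p_j,y) = f(p_i) - f(p_j) - \innp{\nabla f(y),\, p_i - p_j},
\]
so that $\D_f(p_i,y) = \D_f(p_j,y) \Leftrightarrow \innp{\nabla f(y),\, p_i - p_j} = f(p_i) - f(p_j)$. Then I would chain these equivalences through the base point $p_0$: by \cref{eq:Eright}, $y \in \overrightarrow{E}_f(\mathcal{S})$ iff $y \in \inte \dom f$ and $\D_f(p_i,y) = \D_f(p_0,y)$ for every $i \in \I$, which by the displayed identity is equivalent to $\innp{\nabla f(y),\, p_i - p_0} = f(p_i) - f(p_0)$ for every $i \in \I$; invoking the definition \cref{eq:eta} of $\eta_i$ closes the argument.

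I do not expect a genuine obstacle: this is essentially a one-line computation once the Bregman distance is written out, entirely parallel to \cref{lemma:xinEp1pm}. The only point deserving a moment's care is domain bookkeeping — $\D_f(p_i,y)$ is finite (indeed real-valued) precisely because $p_i \in \dom f$ by \cref{eq:Sfor} and $y \in \inte \dom f$, so the cancellations above are legitimate and no $+\infty$ arithmetic is involved.
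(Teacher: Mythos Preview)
Your proposal is correct and essentially identical to the paper's proof: both fix $y \in \inte \dom f$, expand $\D_f(p_i,y)$ via \cref{defn:BregmanDistance}, cancel the common terms to obtain $\D_f(p_i,y)=\D_f(p_j,y) \Leftrightarrow \innp{\nabla f(y),p_i-p_j}=f(p_i)-f(p_j)$, and then specialize to $j=0$ and invoke \cref{eq:eta}. Your explicit remark on domain bookkeeping (finiteness via $p_i\in\dom f$ and $y\in\inte\dom f$) is a slight elaboration beyond what the paper spells out, but the argument is otherwise the same.
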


\begin{proof}
Let $y \in \inte \dom f$.	Then  $f(y) \in \mathbb{R}$. Hence, for every $\{ i, j  \} \subseteq   \I$, by \cref{defn:BregmanDistance},
	 \begin{subequations}\label{eq:DfpixDfpjx}
	 	\begin{align}
	 	\D_{f} (p_{i}, y) =\D_{f} ( p_{j}, y) & \Leftrightarrow f(p_{i}) - f(y) -\innp{\nabla f(y), p_{i} -y} = f(p_{j}) - f(y) -\innp{\nabla f(y), p_{j} -y} \\
	 	& \Leftrightarrow \innp{\nabla f(y) ,  p_{j} - p_{i} }  = f(p_{j})  -f(p_{i}).
	 	\end{align}
	 \end{subequations}
Therefore,
		\begin{align*}
		  y\in \overrightarrow{E}_{f}(\mathcal{S} )
		\stackrel{\cref{eq:Eright}}{\Leftrightarrow} & (\forall i \in \I) \quad  \D_{f} (p_{i},y) =\D_{f} (p_{0},y ) \\
		\stackrel{\cref{eq:DfpixDfpjx}}{\Leftrightarrow} & (\forall i \in\I) \quad \innp{\nabla f(y) ,  p_{i}  -p_{0} } = f(p_{i})  -f(p_{0})\\
		\stackrel{\cref{eq:eta}}{\Leftrightarrow} & (\forall i \in \I) \quad \innp{\nabla f(y) ,  p_{i}  -p_{1} } =\eta_{i}.
		\end{align*}
\end{proof}

\begin{lemma} \label{lemma:CCSpsF}
	Suppose that  $p_{0}, p_{1}, \ldots, p_{m}$ are affinely independent. Set
	\begin{align*}
q: =  p_{0}+ (p_{1}-p_{0},\ldots,p_{m}-p_{0})
G( p_{1}-p_{0},\ldots,p_{m}-p_{0})^{-1}
\begin{pmatrix}
\eta_{1}-\innp{p_{0},p_{1}-p_{0}}\\
\vdots\\
\eta_{m}-\innp{p_{0},p_{m}-p_{0}} \\
\end{pmatrix},
	\end{align*}
		Then $	\left\{ x \in \aff ( \{p_{0}, p_{1}, \ldots, p_{m} \} ) ~:~  (\forall i \in \I) ~ \innp{x , p_{i} -p_{0}} = \eta_{i}  \right\}  = \{q\}$.
\end{lemma}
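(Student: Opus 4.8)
The plan is to recognize that this lemma is nothing but a specialization of \cref{thm:LinIndpPformula}, exactly as \cref{lemma:CCSpseudoB} was. So the first (and essentially only) step is to set $z_{0}:=p_{0},\ z_{1}:=p_{1},\ \ldots,\ z_{m}:=p_{m}$ and $\lambda_{i}:=\eta_{i}$ for every $i\in\I$, and then check that the hypothesis and the conclusion of \cref{thm:LinIndpPformula} match those of the present statement under this substitution.

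Concretely, I would verify that: (i) the affine independence of $p_{0},p_{1},\ldots,p_{m}$ is precisely the affine independence hypothesis required by \cref{thm:LinIndpPformula}; (ii) the Gram matrix $G(p_{1}-p_{0},\ldots,p_{m}-p_{0})$ appearing in the definition of $q$ coincides with the Gram matrix in \cref{thm:LinIndpPformula} (it does, by definition), so in particular it is invertible and $q$ is well defined; and (iii) the point $q$ defined in the lemma is literally the point $p$ produced by \cref{thm:LinIndpPformula} with the above data, since
\begin{align*}
q = p_{0}+ (p_{1}-p_{0},\ldots,p_{m}-p_{0})\, G(p_{1}-p_{0},\ldots,p_{m}-p_{0})^{-1}
\begin{pmatrix}
\eta_{1}-\innp{p_{0},p_{1}-p_{0}}\\
\vdots\\
\eta_{m}-\innp{p_{0},p_{m}-p_{0}}
\end{pmatrix}
\end{align*}
is obtained from the formula for $p$ by the substitution $\lambda_{i}\mapsto\eta_{i}$. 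Then \cref{thm:LinIndpPformula} gives directly that
\begin{align*}
\left\{ x \in \aff(\{p_{0},\ldots,p_{m}\}) ~:~ (\forall i\in\I)~ \innp{x,p_{i}-p_{0}}=\eta_{i} \right\} = \{q\},
\end{align*}
which is the desired conclusion.

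I do not anticipate any real obstacle here: the whole content is the dictionary $z_{i}\leftrightarrow p_{i}$, $\lambda_{i}\leftrightarrow\eta_{i}$, and the verification is purely a matter of matching notation. The only thing worth stating explicitly in the write-up is that \cref{thm:LinIndpPformula} is being invoked (mirroring the one-line proof of \cref{lemma:CCSpseudoB}), so that the reader sees the affine independence hypothesis is exactly what guarantees $G(p_{1}-p_{0},\ldots,p_{m}-p_{0})$ is invertible and hence that $q$ makes sense.
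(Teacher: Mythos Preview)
Your proposal is correct and matches the paper's own proof essentially verbatim: the paper simply says to apply \cref{thm:LinIndpPformula} with $z_{0}=p_{0}$, $(\forall i\in\I)\ z_{i}=p_{i}$ and $\lambda_{i}=\eta_{i}$. There is nothing to add.
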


\begin{proof}
	Apply \cref{thm:LinIndpPformula} with $z_{0}=p_{0}$ and $(\forall i \in \I)$ $z_{i}=p_{i}$ and $\lambda_{i}=\eta_{i}$ to deduce the desired result.
\end{proof}

\begin{lemma} \label{lemma:nablafyi:in:Mperp}
	Let $x$ and $y$ be in $ \overrightarrow{E}_{f}(\mathcal{S} )$. Then $\nabla f (y) -\nabla f (x) \in M^{\perp}$.
\end{lemma}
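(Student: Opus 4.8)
The plan is to mirror exactly the argument used for the backward counterpart in \cref{lemma:x-y:Lperp}, replacing the role of \cref{lemma:xinEp1pm} by its forward analogue \cref{lemma:xinEp1pm:right}. Since $\{x,y\} \subseteq \overrightarrow{E}_{f}(\mathcal{S})$, \cref{lemma:xinEp1pm:right} gives that $x$ and $y$ both lie in $\inte \dom f$ and that, for every $i \in \I$,
\begin{align*}
\innp{\nabla f(x), p_{i} - p_{0}} = \eta_{i} \quad \text{and} \quad \innp{\nabla f(y), p_{i} - p_{0}} = \eta_{i}.
\end{align*}

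Next I would subtract these two families of equalities: for every $i \in \I$,
\begin{align*}
\innp{\nabla f(y) - \nabla f(x),\, p_{i} - p_{0}} = 0.
\end{align*}
Since by \cref{eq:M} we have $M = \spn\{p_{1} - p_{0}, \ldots, p_{m} - p_{0}\}$, orthogonality against each generator $p_{i} - p_{0}$ is equivalent to orthogonality against all of $M$, so $\nabla f(y) - \nabla f(x) \in M^{\perp}$, which is the claim.

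There is essentially no obstacle here: the whole content has already been packaged into \cref{lemma:xinEp1pm:right}, and what remains is the one-line linear-algebra observation that a vector orthogonal to a spanning set of $M$ lies in $M^{\perp}$. If anything, the only point to be slightly careful about is making sure $\nabla f(x)$ and $\nabla f(y)$ are well defined, which is guaranteed because \cref{lemma:xinEp1pm:right} places $x$ and $y$ in $\inte \dom f$ where $f$ is assumed G\^ateaux differentiable.
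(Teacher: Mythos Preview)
Your proposal is correct and follows essentially the same approach as the paper's own proof: invoke \cref{lemma:xinEp1pm:right} to obtain the two families of equalities $\innp{\nabla f(x), p_i-p_0}=\eta_i$ and $\innp{\nabla f(y), p_i-p_0}=\eta_i$, subtract, and use the definition \cref{eq:M} of $M$ to conclude. There is nothing to add.
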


\begin{proof}
Notice that	$\{ x,y\} \subseteq \overrightarrow{E}_{f}(\mathcal{S})$ implies that $\{x,y\} \subseteq \inte \dom f$, and  that, by \cref{lemma:xinEp1pm:right},
	\begin{subequations}
		\begin{align}
	(\forall i \in \I) \quad 	\innp{\nabla f(x) , p_{i }-p_{0}} = \eta_{i}, \label{lemma:nablafyi:in:Mperp:y1}\\
	(\forall i \in \I) \quad 	\innp{\nabla f(y) , p_{i}-p_{0}} = \eta_{i}. \label{lemma:nablafyi:in:Mperp:y2}
		\end{align}
	\end{subequations}
Subtracting \cref{lemma:nablafyi:in:Mperp:y1} from \cref{lemma:nablafyi:in:Mperp:y2} to deduce that  $	(\forall i \in \I) $ $ \innp{\nabla f(y ) -\nabla f(x), p_{i }-p_{0}} = 0$,
which, connecting with the definition of $M$ in \cref{eq:M}, yields the desired result.
\end{proof}

\begin{lemma} \label{lem:CCOfrS}
	Suppose that $f$ is Legendre. Then
	$\left( \nabla f^{*} \left( \aff    \mathcal{S} \right)   \right) \cap  \overrightarrow{E}_{f}(\mathcal{S}) $  is either an empty set or a singleton.
\end{lemma}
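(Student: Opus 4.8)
The plan is to mimic the argument for the backward analogue \cref{lemma:CCSsingletonOrempty}, transported through the Legendre duality $\nabla f \leftrightarrow \nabla f^{*}$. It suffices to show that if $x$ and $y$ both lie in $\left( \nabla f^{*}(\aff \mathcal{S}) \right) \cap \overrightarrow{E}_{f}(\mathcal{S})$, then $x = y$; together with the trivial possibility that the set is empty, this yields the dichotomy.

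First I would extract what membership in $\nabla f^{*}(\aff \mathcal{S})$ gives us. Since $f$ is Legendre, by \cref{fact:nablaf:nablaf*:id} the map $\nabla f^{*}$ is a bijection from $\inte \dom f^{*}$ onto $\inte \dom f$ with inverse $\nabla f$. Hence $x \in \nabla f^{*}(\aff \mathcal{S})$ means $x = \nabla f^{*}(a)$ for some $a \in \aff \mathcal{S} \cap \inte \dom f^{*}$, so that $x \in \inte \dom f$ and $\nabla f(x) = \nabla f(\nabla f^{*}(a)) = a \in \aff \mathcal{S}$; likewise $y \in \inte \dom f$ and $\nabla f(y) \in \aff \mathcal{S}$. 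Consequently
\begin{align*}
\nabla f(x) - \nabla f(y) \in \aff \mathcal{S} - \aff \mathcal{S} \stackrel{\cref{eq:M}}{=} M .
\end{align*}

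Next, since $\{x,y\} \subseteq \overrightarrow{E}_{f}(\mathcal{S})$, \cref{lemma:nablafyi:in:Mperp} gives $\nabla f(y) - \nabla f(x) \in M^{\perp}$. Combining the two displays, $\nabla f(x) - \nabla f(y) \in M \cap M^{\perp} = \{0\}$, i.e.\ $\nabla f(x) = \nabla f(y)$. Finally, because $x, y \in \inte \dom f$ and $\nabla f$ is injective on $\inte \dom f$ (again \cref{fact:nablaf:nablaf*:id}), we conclude $x = y$, which finishes the proof.

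\textbf{Main obstacle.} The only delicate point is the bookkeeping of domains: $\nabla f^{*}$ is single-valued only on $\inte \dom f^{*}$, so one must read $\nabla f^{*}(\aff \mathcal{S})$ as $\{\nabla f^{*}(a) : a \in \aff \mathcal{S} \cap \inte \dom f^{*}\}$ and verify, via \cref{fact:nablaf:nablaf*:id}, that applying $\nabla f$ returns us exactly to $a \in \aff \mathcal{S}$ and that the relevant points sit in $\inte \dom f$ so that injectivity of $\nabla f$ applies. Everything else is a direct $M \cap M^{\perp} = \{0\}$ argument parallel to \cref{lemma:CCSsingletonOrempty}.
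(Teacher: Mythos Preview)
Your proposal is correct and follows essentially the same route as the paper: show $\nabla f(x)-\nabla f(y)\in M$ via the Legendre bijection \cref{fact:nablaf:nablaf*:id}, invoke \cref{lemma:nablafyi:in:Mperp} for $M^{\perp}$, intersect to get $\nabla f(x)=\nabla f(y)$, and conclude $x=y$ by injectivity of $\nabla f$. The only difference is cosmetic---the paper applies $\nabla f^{*}$ to both sides at the end rather than citing injectivity of $\nabla f$, and it is slightly less explicit than you are about the domain bookkeeping.
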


\begin{proof}
	Suppose that $ \left( \nabla f^{*} \left( \aff    \mathcal{S} \right)   \right) \cap  \overrightarrow{E}_{f}(\mathcal{S})  \neq \varnothing$ and that  $\{y_{1}, y_{2} \}  \subseteq \left( \nabla f^{*} \left( \aff    \mathcal{S} \right)   \right) \cap  \overrightarrow{E}_{f}(\mathcal{S}) $.
	Then, by \cref{fact:nablaf:nablaf*:id} and \cref{eq:M},
	\begin{align} \label{eq:lem:CCOfrS:M}
	\nabla f (y_{1} )-\nabla f (y_{2}) \in \aff  (\mathcal{S})   -\aff    (\mathcal{S}) =M.
	\end{align}	
	Because $\{y_{1}, y_{2} \}  \subseteq  \overrightarrow{E}_{f}(\mathcal{S}) $,    \cref{lemma:nablafyi:in:Mperp} implies that
	\begin{align}\label{eq:lem:CCOfrS:Mperp}
	\nabla f (y_{1} )-\nabla f (y_{2})  \in  M^{\perp}.
	\end{align}
	Combine	\cref{eq:lem:CCOfrS:M} and \cref{eq:lem:CCOfrS:Mperp} to deduce that $\nabla f (y_{1} )-\nabla f (y_{2})  \in  M \cap M^{\perp} =\{0\}$, that is,
	\begin{align}\label{eq:lem:CCOfrS:EQ}
	\nabla f (y_{1} )= \nabla f (y_{2}).
	\end{align}
	Apply $\nabla f^{*} $ in both sides of \cref{eq:lem:CCOfrS:EQ} and utilize  \cref{fact:nablaf:nablaf*:id}  to obtain that $y_{1} =y_{2}$.
\end{proof}

We are now ready to define forward Bregman circumcenters and pseudo-circumcenters.
\begin{definition} \label{defn:CCS:Bregman:forward}
	Let $\mathcal{P} (\dom f)$ be the set of all nonempty subsets of $\dom f$ containing finitely many elements.  For every $K \in \mathcal{P} (\dom f)$,
	set
	\begin{empheq}[box=\mybluebox]{equation*}
	\overrightarrow{E}_{f}(K):= \{ q \in \inte \dom f~:~  (\forall x \in K) ~\D_{f} (x,q) \text{ is a singleton} \}.
	\end{empheq}
	\begin{enumerate}
		\item \label{defn:CCS:Bregman:forward:}	Define the \emph{forward Bregman circumcenter  operator w.r.t.\,$f$} as
			\begin{empheq}[box=\mybluebox]{equation*}
	\overrightarrow{\CCO{}}_{f}  :  	\mathcal{P} (\dom f) \to 2^{\mathcal{H}} : K \mapsto  \aff (  K ) \cap 	\overrightarrow{E}_{f}(K).
		\end{empheq}
		\item \label{defn:CCS:Bregman:forward:ps} Define the \emph{forward Bregman pseudo-circumcenter  operator w.r.t.\,$f$} as
		\begin{empheq}[box=\mybluebox]{equation*}
		\overrightarrow{\CCO{}}^{ps}_{f} :  \mathcal{P} (\dom f)  \to  2^{\mathcal{H}}  : K \mapsto
	\left(	\nabla f^{*} \left( \aff   K \right) \right) \cap \overrightarrow{E}_{f}  (K).
		\end{empheq}
	\end{enumerate}
In particular, for every $K \in  \mathcal{P} (\dom f)$, we call the element  in $\overrightarrow{\CCO{}}_{f}(K) $ and   $\overrightarrow{\CCO{}}^{ps}_{f}(K)$ forward Bregman circumcenter  and forward Bregman pseudo-circumcenter of $K$, respectively.
\end{definition}

In view of \cref{lem:CCOfrS}, $\left( \forall K \in \mathcal{P} (\dom f) \right)$ $\overrightarrow{\CCO{}}^{ps}_{f}(K)$
 is either a singleton or an empty set.

\subsection*{Existence of forward Bregman circumcenters}

\begin{theorem} \label{theorem:forwardCCS}
	 	Suppose that $f$ is Legendre and that $\overrightarrow{E}_{f}(\mathcal{S} )   \neq \varnothing$.  Let $y \in \overrightarrow{E}_{f}(\mathcal{S} )$.
	Then the following hold.
	\begin{enumerate}
		\item \label{theorem:forwardCCS:EQ} $\overrightarrow{\CCO{}}_{f}(\mathcal{S}) = \aff (\mathcal{S}) \cap \inte \dom f \cap \nabla f^{*} \left( \nabla f(y) +M^{\perp}  \right)$.
		\item \label{theorem:forwardCCS:P} Suppose that $ \aff (\mathcal{S}) \cap  \inte \dom f \neq \varnothing$ $($e.g., $\dom f =\mathcal{H}$ or $ \mathcal{S} \cap  \inte \dom f \neq \varnothing$$)$.  Then $ \overleftarrow{\Pro}^{f}_{\aff(\mathcal{S})}(y) \in \overrightarrow{\CCO{}}_{f}(\mathcal{S})$.
	\end{enumerate}
\end{theorem}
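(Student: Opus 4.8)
The plan is to prove \cref{theorem:forwardCCS:EQ} by establishing the set identity $\overrightarrow{E}_{f}(\mathcal{S}) = \inte \dom f \cap \nabla f^{*}(\nabla f(y) + M^{\perp})$ and then intersecting with $\aff(\mathcal{S})$, and to prove \cref{theorem:forwardCCS:P} by computing $\overleftarrow{\Pro}^{f}_{\aff(\mathcal{S})}(y)$ and checking it meets the description just obtained. Throughout, recall that, since $\nabla f^{*}$ is defined on $\inte \dom f^{*}$, the symbol $\nabla f^{*}(\nabla f(y) + M^{\perp})$ means the image under $\nabla f^{*}$ of $(\nabla f(y) + M^{\perp}) \cap \inte \dom f^{*}$, and that by \cref{fact:nablaf:nablaf*:id} the map $\nabla f \colon \inte \dom f \to \inte \dom f^{*}$ is a bijection with inverse $\nabla f^{*}$; this will be used to shuttle between $\inte \dom f$ and $\inte \dom f^{*}$ repeatedly.

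For \cref{theorem:forwardCCS:EQ}, since $\overrightarrow{\CCO{}}_{f}(\mathcal{S}) = \aff(\mathcal{S}) \cap \overrightarrow{E}_{f}(\mathcal{S})$ by \cref{defn:CCS:Bregman:forward} and $\overrightarrow{E}_{f}(\mathcal{S}) \subseteq \inte \dom f$, it suffices to prove the identity $\overrightarrow{E}_{f}(\mathcal{S}) = \inte \dom f \cap \nabla f^{*}(\nabla f(y) + M^{\perp})$. For the inclusion $\subseteq$: given $z \in \overrightarrow{E}_{f}(\mathcal{S})$, both $z$ and the fixed $y$ lie in $\overrightarrow{E}_{f}(\mathcal{S})$, so \cref{lemma:nablafyi:in:Mperp} yields $\nabla f(z) - \nabla f(y) \in M^{\perp}$; since $z \in \inte \dom f$, \cref{fact:nablaf:nablaf*:id} gives $\nabla f(z) \in \inte \dom f^{*}$ and $z = \nabla f^{*}(\nabla f(z))$, so $z \in \nabla f^{*}(\nabla f(y) + M^{\perp})$. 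For $\supseteq$: given $z \in \inte \dom f$ with $z = \nabla f^{*}(w)$ for some $w \in (\nabla f(y) + M^{\perp}) \cap \inte \dom f^{*}$, \cref{fact:nablaf:nablaf*:id} gives $\nabla f(z) = w$, hence $\nabla f(z) - \nabla f(y) \in M^{\perp}$; pairing against $p_{i} - p_{0} \in M$ and using $\innp{\nabla f(y), p_{i} - p_{0}} = \eta_{i}$ (which holds by \cref{lemma:xinEp1pm:right} since $y \in \overrightarrow{E}_{f}(\mathcal{S})$) we get $\innp{\nabla f(z), p_{i} - p_{0}} = \eta_{i}$ for every $i \in \I$, so $z \in \overrightarrow{E}_{f}(\mathcal{S})$ again by \cref{lemma:xinEp1pm:right}.

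For \cref{theorem:forwardCCS:P}, the hypothesis $\aff(\mathcal{S}) \cap \inte \dom f \neq \varnothing$, together with the closedness of $\aff(\mathcal{S})$ (finite set) and $y \in \inte \dom f$ (as $y \in \overrightarrow{E}_{f}(\mathcal{S})$), guarantees that $z := \overleftarrow{\Pro}^{f}_{\aff(\mathcal{S})}(y)$ exists (by \cite[Corollary~7.9]{BBC2001}). Then \cref{theor:affine:character:P}\cref{theor:affine:character:PleftCf}, equivalently \cref{fact:charac:PleftCf}, yields $z \in \aff(\mathcal{S}) \cap \inte \dom f$ and $\innp{\nabla f(y) - \nabla f(z), u - z} = 0$ for every $u \in \aff(\mathcal{S})$. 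Inserting $u = p_{i}$ and $u = p_{0}$ and subtracting gives $\innp{\nabla f(y) - \nabla f(z), p_{i} - p_{0}} = 0$ for every $i \in \I$, i.e.\ $\nabla f(z) - \nabla f(y) \in M^{\perp}$; hence $z = \nabla f^{*}(\nabla f(z)) \in \nabla f^{*}(\nabla f(y) + M^{\perp})$, and combining with $z \in \aff(\mathcal{S}) \cap \inte \dom f$ and \cref{theorem:forwardCCS:EQ} above gives $z \in \overrightarrow{\CCO{}}_{f}(\mathcal{S})$, as desired.

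The argument is short and is the backward/forward dual of the proof of \cref{theorem:formualCCS:Pleft}\cref{theorem:formualCCS:Pleft:P}; in fact it is simpler, because here no passage to $f^{*}$ and no forward-Bregman-projection hypotheses are needed. The only two points that require care are (a) consistently reading $\nabla f^{*}(\nabla f(y)+M^{\perp})$ through the domain $\inte \dom f^{*}$ of $\nabla f^{*}$ and using the inverse relationship of \cref{fact:nablaf:nablaf*:id} in both directions, and (b) securing the existence of the backward Bregman projection onto the affine subspace $\aff(\mathcal{S})$ — which is precisely where the extra hypothesis of \cref{theorem:forwardCCS:P} is used. Neither is a genuine obstacle.
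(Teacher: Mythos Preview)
Your proof is correct and follows essentially the same route as the paper's: for \cref{theorem:forwardCCS:EQ} the paper simply cites \cref{defn:CCS:Bregman:forward}\cref{defn:CCS:Bregman:forward:}, \cref{lemma:nablafyi:in:Mperp}, and \cref{fact:nablaf:nablaf*:id} without writing out the two inclusions you spell out, and for \cref{theorem:forwardCCS:P} it applies \cref{theor:affine:character:P}\cref{theor:affine:character:PleftCf} with $U=\aff(\mathcal{S})$, observes that $\aff(\mathcal{S})-\overleftarrow{\Pro}^{f}_{\aff(\mathcal{S})}(y)=M$ (your substitution $u=p_{i},p_{0}$ is the same observation pointwise), and concludes via \cref{fact:nablaf:nablaf*:id} and part~\cref{theorem:forwardCCS:EQ}. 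Your explicit mention of existence of $\overleftarrow{\Pro}^{f}_{\aff(\mathcal{S})}(y)$ via \cite[Corollary~7.9]{BBC2001} is a nice touch that the paper leaves implicit.
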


\begin{proof}
	\cref{theorem:forwardCCS:EQ}: This is clear from \cref{defn:CCS:Bregman:forward}\cref{defn:CCS:Bregman:forward:}, \cref{lemma:nablafyi:in:Mperp}, and \cref{fact:nablaf:nablaf*:id}.
	
	\cref{theorem:forwardCCS:P}:
Because $y \in \overrightarrow{E}_{f}(\mathcal{S}) \subseteq  \inte \dom f$,  $\aff (\mathcal{S}) \cap \inte \dom f \neq \varnothing$, and  $\aff (\mathcal{S})$ is a closed affine subspace, apply \cref{theor:affine:character:P}\cref{theor:affine:character:PleftCf} with $U=\aff (\mathcal{S})$ to obtain that
	\begin{subequations}
			\begin{align}
		&\overleftarrow{\Pro}^{f}_{\aff (\mathcal{S})} (y)   \in \aff (\mathcal{S}) \cap \inte \dom f, \text{ and} \label{eq:theorem:leftP:in:rightCCS:P}\\
	&	\Innp{\nabla f(y) -\nabla f \left( \overleftarrow{\Pro}^{f}_{\aff (\mathcal{S})} (y)  \right), \aff (\mathcal{S})-\overleftarrow{\Pro}^{f}_{\aff (\mathcal{S})} (y)  } = 0.\label{eq:theorem:leftP:in:rightCCS:innp}
		\end{align}
	\end{subequations}
Employ $\overleftarrow{\Pro}^{f}_{\aff (\mathcal{S})} (y)   \in \aff (\mathcal{S}) $ and \cref{eq:M} to see that $  \aff (\mathcal{S})-\overleftarrow{\Pro}^{f}_{\aff (\mathcal{S})} (y)  =M$. Combine this with \cref{eq:theorem:leftP:in:rightCCS:innp} and \cref{fact:nablaf:nablaf*:id} to observe  that
\begin{align*}
\nabla f(y) -\nabla f \left( \overleftarrow{\Pro}^{f}_{\aff (\mathcal{S})} (y)  \right) \in M^{\perp} \Leftrightarrow \overleftarrow{\Pro}^{f}_{\aff (\mathcal{S})} (y)  \in ( \nabla f)^{-1} \left( \nabla f(y) +M^{\perp}  \right) =\nabla f^{*} \left( \nabla f(y) +M^{\perp}  \right),
\end{align*}
which, combining with \cref{eq:theorem:leftP:in:rightCCS:P} and \cref{theorem:forwardCCS:EQ}, yields  the desired result.
\end{proof}

	\begin{example}\label{exam:forward:Rn}
		Suppose that $\mathcal{H} =\mathbb{R}^{n}$.  Denote by  $(\forall x \in \mathcal{H} )$ $x= (x_{i})^{n}_{i=1} $.  Then the following statements hold.
		\begin{enumerate}
			\item \label{exam:forward:Rn:negative} Suppose that  $(\forall x \in \left[0, +\infty\right[^{n} )$ $f(x) =\sum^{n}_{i=1} x_{i} \ln( x_{i}) -x_{i}$.
			\begin{enumerate}
				\item \label{exam:forward:Rn:negative:xyz} Suppose that $\mathcal{S}:= \{x,y,z\} \subseteq \left]0, +\infty\right[^{n}$ such that $x$,  $y$ and $ z$ are pairwise distinct. Let $p \in \mathcal{H} $.  Then $ p \in 	\overrightarrow{\CCO{}}_{f}(\mathcal{S}) $ if and only if $p=x + \alpha (y-x) +\beta (z-x) \in \left]0, +\infty\right[^{n}$, where $(\alpha, \beta)^{\intercal} \in \mathbb{R}^{2}$ solves the following system of equations
				\begin{align*}
			\begin{cases}
			\sum^{n}_{i=1} (y_{i}-x_{i}) \ln\left(  x_{i} +\alpha (y_{i} -x_{i}) +\beta (z_{i} -x_{i})  \right)=f(y) -f(x)\\
			\sum^{n}_{i=1} (z_{i}-x_{i}) \ln\left(  x_{i} +\alpha (y_{i} -x_{i}) +\beta (z_{i} -x_{i})  \right)=f(z) -f(x)
			\end{cases}.
				\end{align*}
				\item \label{exam:forward:Rn:negative:} Suppose that $\mathcal{S}:= \{(1,1,1), (1,2,1), (1,1,2)\} $. Then
				\begin{align*}
				\overrightarrow{\CCO{}}_{f}(\mathcal{S}) = \left\{ (1,1,1) + \left( \frac{4}{\rm e} -1\right) (0,1,0) +\left( \frac{4}{\rm e} -1\right) (0,0,1)   \right\}.
				\end{align*}
			\end{enumerate}
			\item \label{exam:forward:Rn:FD}	Suppose that $(\forall x \in \left[0, 1\right]^{n} )$ $f(x) =\sum^{n}_{i=1} x_{i} \ln( x_{i}) + (1-x_{i}) \ln (1-x_{i})$.
			\begin{enumerate}
				\item  \label{exam:forward:Rn:FD:xyz} Suppose that $\mathcal{S}:= \{x,y,z\} \subseteq  \left]0, 1\right[^{n} $ such that $x$,  $y$ and $ z$ are pairwise distinct. Let $p \in \mathcal{H} $.  Then $ p \in \overrightarrow{\CCO{}}_{f}(\mathcal{S})  $ if and only if $p=x + \alpha (y-x) +\beta (z-x) \in \left]0, 1\right[^{n}$, where $(\alpha, \beta)^{\intercal} \in \mathbb{R}^{2}$ solves the following system of equations
				\begin{align*}
				 	\begin{cases}
				 \sum^{n}_{i=1} (y_{i}-x_{i}) \ln\left(  \frac{ x_{i} +\alpha (y_{i} -x_{i}) +\beta (z_{i} -x_{i})  }{1-x_{i} -\alpha (y_{i} -x_{i}) -\beta (z_{i} -x_{i}) } \right)=f(y) -f(x)\\
				 \sum^{n}_{i=1} (z_{i}-x_{i}) \ln\left(  \frac{ x_{i} +\alpha (y_{i} -x_{i}) +\beta (z_{i} -x_{i})  }{1-x_{i} -\alpha (y_{i} -x_{i}) -\beta (z_{i} -x_{i}) } \right)=f(z) -f(x)
				 \end{cases}.
				\end{align*}
				\item  \label{exam:forward:Rn:FD:} Suppose that $\mathcal{S}:= \{(\frac{1}{4},\frac{1}{4},\frac{1}{4}), (\frac{1}{4},\frac{1}{2},\frac{1}{4}), (\frac{1}{4},\frac{1}{4},\frac{1}{2})\} $. Then
				\begin{align*}
			\overrightarrow{\CCO{}}_{f}(\mathcal{S})=  \left\{   \left(\frac{1}{4},\frac{1}{4},\frac{1}{4}\right) +\frac{21}{43} \left(0,\frac{1}{4},0\right)+\frac{21}{43} \left(0,0,\frac{1}{4}\right)
			 \right\}= \left\{ \left(\frac{1}{4},\frac{16}{43},\frac{16}{43}\right)    \right\}.
				\end{align*}
			\end{enumerate}	
		\end{enumerate}
	\end{example}
	
	\begin{proof}
		\cref{exam:forward:Rn:negative:xyz}:  As a consequence of \cref{defn:BregmanDistance},
		\begin{align*}
		D_{f} (p,x) =D_{f} (p,y)=D_{f} (p,z)  &\Leftrightarrow  \begin{cases}
	\innp{\nabla f(p), y-x} =f(y) -f(x)\\
	 \innp{\nabla f(p),z-x} =f(z) -f(x)
		\end{cases}\\
		& \Leftrightarrow \begin{cases}
		\sum^{n}_{i=1} (y_{i}-x_{i}) \ln(p_{i}) =f(y) -f(x)\\
		\sum^{n}_{i=1} (z_{i}-x_{i}) \ln(p_{i}) =f(z) -f(x)
		\end{cases}\\
		& \Leftrightarrow
		\begin{cases}
		 \sum^{n}_{i=1} (y_{i}-x_{i}) \ln\left(  x_{i} +\alpha (y_{i} -x_{i}) +\beta (z_{i} -x_{i})  \right)=f(y) -f(x)\\
		\sum^{n}_{i=1} (z_{i}-x_{i}) \ln\left(  x_{i} +\alpha (y_{i} -x_{i}) +\beta (z_{i} -x_{i})  \right)=f(z) -f(x)
		\end{cases}.
		\end{align*}
		
		 \cref{exam:forward:Rn:negative:}: This follows directly from \cref{exam:forward:Rn:negative:xyz} above.
		
		\cref{exam:forward:Rn:FD}: The proof is similar to that of \cref{exam:forward:Rn:negative} and is omitted here.
	\end{proof}

\begin{example} \label{exam:forward:R}
	Suppose that $\mathcal{H}=\mathbb{R}$. Then the following statements hold.
	\begin{enumerate}
		\item  \label{exam:forward:R:negative} Suppose that  $(\forall x \in \left[0, +\infty\right[ )$ $f(x) =x \ln( x) -x$.
		\begin{enumerate}
			\item  \label{exam:forward:R:negative:xy} Suppose $\mathcal{S}:= \{x,y\} \subseteq \left[0, +\infty\right[$ with $x \neq y$. Then $\overrightarrow{\CCO{}}_{f}(\mathcal{S}) = \left\{ \exp \left(  \frac{y \ln(y) -x\ln(x) +x -y}{y-x}  \right) \right\}$.
			\item  \label{exam:forward:R:negative:xyz} Suppose $\mathcal{S}:= \{x,y,z\} \subseteq \left[0, +\infty\right[$ such that $x$,  $y$, and $ z$ are pairwise distinct. Then $\overrightarrow{\CCO{}}_{f}(\mathcal{S}) = \varnothing$.
		\end{enumerate}
		
		\item  \label{exam:forward:R:FD} Suppose that $(\forall x \in \left[0, 1\right] )$ $f(x) =x \ln( x) + (1-x) \ln (1-x)$.
		\begin{enumerate}
			\item \label{exam:forward:R:FD:xy} Suppose $\mathcal{S}:= \{x,y\} \subseteq \left[0, 1\right] $ with $x \neq y$. Then
			\begin{align*}
			\overrightarrow{\CCO{}}_{f}(\mathcal{S}) = \left\{  \left(  \exp \left( \frac{  x\ln(x) + (1-x) \ln(1-x) - y\ln(y) -(1-y) \ln(1-y) }{y-x} \right) +1  \right)^{-1} \right\} .
			\end{align*}
			
			\item \label{exam:forward:R:FD:xyz} Suppose $\mathcal{S}:= \{x,y,z\} \subseteq  \left[0, 1\right] $ such that $x$,  $y$, and $ z$ are pairwise distinct. Then $\overrightarrow{\CCO{}}_{f}(\mathcal{S}) = \varnothing$.
		\end{enumerate}
	\end{enumerate}
	
\end{example}

\begin{proof}
	This comes from  \cref{defn:BregmanDistance} and some easy calculus and algebra.  In particular, the proof of \cref{exam:forward:R:negative:xyz}  and \cref{exam:forward:R:FD:xyz}  are similar to that of \cref{exam:back:R}.
\end{proof}

\subsection*{Explicit formula of forward Bregman pseudo-circumcenters}

\begin{proposition}  \label{prop:psuCCS:forward:matrix}
	Set
	\begin{align*}
	&A:=
	\begin{pmatrix}
	\Innp{ p_{1} -p_{0}, p_{1} -p_{0} } & \ldots& \Innp{p_{1} -p_{0},p_{m} -p_{0}}\\
	\vdots & \ldots & \vdots\\
	\Innp{p_{m} -p_{0}, p_{1} -p_{0}} & \ldots& \Innp{p_{m} -p_{0}, p_{m} -p_{0} }\\
	\end{pmatrix} \text{ and }
 B:= \begin{pmatrix}
	\beta_{1} - \Innp{p_{0}, p_{1}-p_{0}}\\
	\vdots\\
	\beta_{m} - \Innp{p_{0}, p_{m}-p_{0}}
	\end{pmatrix}.
	\end{align*}
	Then
	$ \overrightarrow{\CCO{}}^{ps}_{f}(\mathcal{S}) = \left\{ q \in \inte \dom f   ~:~ \exists (\alpha_{1}, \ldots, \alpha_{m} )^{\intercal}  \in \mathbb{R}^{m} \text{ s.t. } \nabla f (q) =p_{0} +\sum^{m}_{i=1} \alpha_{i} \left( p_{i} -p_{0} \right)  \text{ and } A\alpha =B   \right\}$.
	
\end{proposition}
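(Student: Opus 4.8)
The plan is to unfold \cref{defn:CCS:Bregman:forward}\cref{defn:CCS:Bregman:forward:ps}, rewrite the requirement $q\in\nabla f^{*}(\aff\mathcal{S})$ in a parametric form, and then apply \cref{lemma:xinEp1pm:right} to the condition $q\in\overrightarrow{E}_{f}(\mathcal{S})$; together these collapse membership in $\overrightarrow{\CCO{}}^{ps}_{f}(\mathcal{S})$ to a single affine system in $\mathbb{R}^{m}$. The argument is completely parallel to the proofs of \cref{prop:formualCCS:Pleft:matrixEQ,prop:formualCCS:matrixEQ}.

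First I would observe that, by \cref{defn:CCS:Bregman:forward}\cref{defn:CCS:Bregman:forward:ps}, $q\in\overrightarrow{\CCO{}}^{ps}_{f}(\mathcal{S})$ holds exactly when $q\in\nabla f^{*}(\aff\mathcal{S})$ and $q\in\overrightarrow{E}_{f}(\mathcal{S})$; in particular $q\in\inte\dom f$. Using the inverse correspondence $\nabla f=(\nabla f^{*})^{-1}$ between $\inte\dom f$ and $\inte\dom f^{*}$ --- available from \cref{fact:nablaf:nablaf*:id} when $f$ is Legendre, and in general from the Fenchel--Young equality together with the G\^ateaux differentiability of $f$ on $\inte\dom f$ --- I would show that, for $q\in\inte\dom f$, the condition $q\in\nabla f^{*}(\aff\mathcal{S})$ is equivalent to $\nabla f(q)\in\aff\mathcal{S}$. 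Since $\aff\mathcal{S}=p_{0}+\spn\{p_{1}-p_{0},\dots,p_{m}-p_{0}\}$ by \cref{eq:M}, this is in turn equivalent to the existence of $(\alpha_{1},\dots,\alpha_{m})^{\intercal}\in\mathbb{R}^{m}$ with $\nabla f(q)=p_{0}+\sum_{i=1}^{m}\alpha_{i}(p_{i}-p_{0})$.

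Next, for such a $q$, \cref{lemma:xinEp1pm:right} says that $q\in\overrightarrow{E}_{f}(\mathcal{S})$ iff $\innp{\nabla f(q),p_{j}-p_{0}}=\eta_{j}$ for every $j\in\I$. Substituting $\nabla f(q)=p_{0}+\sum_{i=1}^{m}\alpha_{i}(p_{i}-p_{0})$ and expanding by bilinearity, the $j$-th equality becomes $\sum_{i=1}^{m}\alpha_{i}\innp{p_{i}-p_{0},p_{j}-p_{0}}=\eta_{j}-\innp{p_{0},p_{j}-p_{0}}$, which is precisely the $j$-th row of the system $A\alpha=B$ with $A$ and $B$ as in the statement (the $j$-th entry of $B$ being $\eta_{j}-\innp{p_{0},p_{j}-p_{0}}$ with $\eta_{j}$ as in \cref{eq:eta}). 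Assembling the two equivalences yields the asserted description of $\overrightarrow{\CCO{}}^{ps}_{f}(\mathcal{S})$.

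The underlying computation is routine linear algebra; \textbf{the main obstacle} is the domain bookkeeping hidden in the symbol $\nabla f^{*}(\aff\mathcal{S})$: one must justify cleanly that, after intersecting with $\overrightarrow{E}_{f}(\mathcal{S})\subseteq\inte\dom f$, this set equals $\{\,q\in\inte\dom f:\nabla f(q)\in\aff\mathcal{S}\,\}$, i.e.\ that $\nabla f^{*}$ is to be read as applied on $\aff\mathcal{S}\cap\inte\dom f^{*}$ and that no points are lost or gained on the boundary of $\dom f^{*}$. This is the same subtlety handled implicitly in \cref{prop:formualCCS:Pleft:matrixEQ,prop:formualCCS:matrixEQ}, so in practice the proof reduces to a brief argument mimicking that of \cref{prop:formualCCS:matrixEQ}, now based on \cref{defn:CCS:Bregman:forward}\cref{defn:CCS:Bregman:forward:ps} and \cref{lemma:xinEp1pm:right}.
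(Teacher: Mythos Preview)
Your proposal is correct and takes essentially the same route as the paper: the paper's proof simply says to mimic \cref{prop:formualCCS:Pleft:matrixEQ} using \cref{defn:CCS:Bregman:forward}\cref{defn:CCS:Bregman:forward:ps} and \cref{lemma:xinEp1pm:right}, and you have spelled out precisely those steps. Your parenthetical remark that the entries of $B$ should read $\eta_{j}-\innp{p_{0},p_{j}-p_{0}}$ (with $\eta_{j}$ from \cref{eq:eta}) rather than $\beta_{j}$ correctly catches a typographical slip in the statement.
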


\begin{proof}
	The proof is similar to that of \cref{prop:formualCCS:Pleft:matrixEQ}, while this time we utilize
	\cref{defn:CCS:Bregman:forward}\cref{defn:CCS:Bregman:forward:ps}  and \cref{lemma:xinEp1pm:right}.
\end{proof}

\begin{theorem} \label{theorem:psuCCS:forward}
Suppose that $f$ is Legendre and $ \overrightarrow{E}_{f}(\mathcal{S})  \neq \varnothing$.   Let $y \in\overrightarrow{E}_{f}(\mathcal{S})   $.
Then the following  hold.
\begin{enumerate}
	\item \label{theorem:psuCCS:forward:cap} $  \overrightarrow{\CCO{}}^{ps}_{f}(\mathcal{S})  = \nabla f^{*} \left( \aff (\mathcal{S})\right) \cap \inte \dom f \cap \nabla f ^{*}(\nabla f (y) + M^{\perp})$.

	\item \label{theorem:psuCCS:forward:P} Suppose that $ \Pro_{\aff( \mathcal{S}  )} \left( \nabla f (y) \right) \in \inte \dom f^{*}$ $($e.g., $\aff( \mathcal{S}  ) \subseteq  \inte \dom f^{*}$ or $\dom f^{*} =\mathcal{H}$$)$. Then $\overrightarrow{\CCO{}}^{ps}_{f}(\mathcal{S})= \nabla f^{*} \left( \Pro_{\aff( \mathcal{S}  )} (\nabla f(y)) \right)$.
\end{enumerate}

\end{theorem}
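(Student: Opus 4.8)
The plan is to treat part \cref{theorem:psuCCS:forward:cap} as the forward mirror of \cref{theorem:formualCCS}\cref{theorem:formualCCS:id}, and part \cref{theorem:psuCCS:forward:P} as the forward mirror of \cref{theorem:formualCCS}\cref{theorem:formualCCS:EucP}, now transporting everything through the gradient map: the roles of $\nabla f(\mathcal{S})$ and $\aff(\nabla f(\mathcal{S}))$ in the backward case are played here by $\mathcal{S}$ and $\aff(\mathcal{S})$, and the Euclidean projection acts on the $\dom f^{*}$-side and is pulled back by $\nabla f^{*}$. All the bridging between the $\mathcal{H}$-side and the $\dom f^{*}$-side is done by the Legendre hypothesis via \cref{fact:nablaf:nablaf*:id}.

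For \cref{theorem:psuCCS:forward:cap} I would first establish the intermediate identity $\overrightarrow{E}_{f}(\mathcal{S}) = \inte \dom f \cap \nabla f^{*}(\nabla f(y) + M^{\perp})$. The inclusion ``$\subseteq$'' is immediate from \cref{lemma:nablafyi:in:Mperp}: if $x \in \overrightarrow{E}_{f}(\mathcal{S})$ then $x \in \inte \dom f$ and $\nabla f(x) - \nabla f(y) \in M^{\perp}$, hence $\nabla f(x) \in \nabla f(y) + M^{\perp}$ and $x = \nabla f^{*}(\nabla f(x)) \in \nabla f^{*}(\nabla f(y) + M^{\perp})$ by \cref{fact:nablaf:nablaf*:id}. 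For ``$\supseteq$'', take $x \in \inte \dom f$ with $\nabla f(x) \in \nabla f(y) + M^{\perp}$; since $y \in \overrightarrow{E}_{f}(\mathcal{S})$, \cref{lemma:xinEp1pm:right} gives $\innp{\nabla f(y), p_{i} - p_{0}} = \eta_{i}$ for every $i \in \I$, and because $\nabla f(x) - \nabla f(y) \in M^{\perp}$ while $p_{i} - p_{0} \in M$ by \cref{eq:M}, we obtain $\innp{\nabla f(x), p_{i} - p_{0}} = \eta_{i}$, so $x \in \overrightarrow{E}_{f}(\mathcal{S})$ by \cref{lemma:xinEp1pm:right} again. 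Intersecting this identity with $\nabla f^{*}(\aff(\mathcal{S}))$ and invoking \cref{defn:CCS:Bregman:forward}\cref{defn:CCS:Bregman:forward:ps} yields \cref{theorem:psuCCS:forward:cap}.

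For \cref{theorem:psuCCS:forward:P}, set $w := \Pro_{\aff(\mathcal{S})}(\nabla f(y))$, which by hypothesis lies in $\inte \dom f^{*}$, and write $\aff(\mathcal{S}) = p_{0} + M$ via \cref{eq:M}. Exactly as in the proof of \cref{theorem:formualCCS}\cref{theorem:formualCCS:EucP}, one computes $\nabla f(y) - w = \nabla f(y) - p_{0} - \Pro_{M}(\nabla f(y) - p_{0}) = \Pro_{M^{\perp}}(\nabla f(y) - p_{0}) \in M^{\perp}$, using translation of projections $\cite[Proposition~3.19]{BC2017}$ and the orthogonal decomposition $\cite[Theorem~5.8]{D2012}$; equivalently $w \in \nabla f(y) + M^{\perp}$. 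Since also $w \in \aff(\mathcal{S}) \cap \inte \dom f^{*}$, applying $\nabla f^{*}$ gives $\nabla f^{*}(w) \in \nabla f^{*}(\aff(\mathcal{S})) \cap \inte \dom f \cap \nabla f^{*}(\nabla f(y) + M^{\perp})$, which by \cref{theorem:psuCCS:forward:cap} equals $\overrightarrow{\CCO{}}^{ps}_{f}(\mathcal{S})$. Finally, \cref{lem:CCOfrS} guarantees that $\overrightarrow{\CCO{}}^{ps}_{f}(\mathcal{S})$ is a singleton or empty; having exhibited $\nabla f^{*}(w)$ in it, we conclude $\overrightarrow{\CCO{}}^{ps}_{f}(\mathcal{S}) = \{\nabla f^{*}(\Pro_{\aff(\mathcal{S})}(\nabla f(y)))\}$.

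The projection algebra and the characterizations from \cref{lemma:xinEp1pm:right,lemma:nablafyi:in:Mperp} are routine. The one point demanding care is the bookkeeping of domains, since $\nabla f^{*}$ is defined only on $\inte \dom f^{*}$: the hypothesis $\Pro_{\aff(\mathcal{S})}(\nabla f(y)) \in \inte \dom f^{*}$ is precisely what licenses applying $\nabla f^{*}$ to $w$ in part \cref{theorem:psuCCS:forward:P}, and in part \cref{theorem:psuCCS:forward:cap} one must note that $\nabla f(y) \in \inte \dom f^{*}$ (because $y \in \overrightarrow{E}_{f}(\mathcal{S}) \subseteq \inte \dom f$ together with \cref{fact:nablaf:nablaf*:id}) so that the translate $\nabla f(y) + M^{\perp}$ is interpreted correctly under $\nabla f^{*}$. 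I expect no genuine obstacle beyond this domain bookkeeping; the Legendre assumption and \cref{fact:nablaf:nablaf*:id} carry all the weight in passing between the primal and dual sides.
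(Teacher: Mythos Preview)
Your proposal is correct and follows essentially the same approach as the paper: part~\cref{theorem:psuCCS:forward:cap} is obtained from \cref{defn:CCS:Bregman:forward}\cref{defn:CCS:Bregman:forward:ps}, \cref{fact:nablaf:nablaf*:id}, and \cref{lemma:nablafyi:in:Mperp}, and part~\cref{theorem:psuCCS:forward:P} from the same projection computation $\Pro_{\aff(\mathcal{S})}(\nabla f(y))-\nabla f(y)\in M^{\perp}$ followed by an application of $\nabla f^{*}$ and part~\cref{theorem:psuCCS:forward:cap}. Your write-up is in fact slightly more explicit than the paper's---you spell out both inclusions for the intermediate identity $\overrightarrow{E}_{f}(\mathcal{S})=\inte\dom f\cap\nabla f^{*}(\nabla f(y)+M^{\perp})$ and you invoke \cref{lem:CCOfrS} explicitly to pin down the singleton---but the underlying argument is identical.
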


\begin{proof}
	\cref{theorem:psuCCS:forward:cap}: This is clear from \cref{defn:CCS:Bregman:forward}\cref{defn:CCS:Bregman:forward:}, 	\cref{fact:nablaf:nablaf*:id} and \cref{lemma:nablafyi:in:Mperp}.

\cref{theorem:psuCCS:forward:P}: Exploit	\cref{fact:nablaf:nablaf*:id} to observe that $ \nabla f^{*} \left( \Pro_{\aff( \mathcal{S}  )} (\nabla f(y)) \right) \in  \inte \dom f \cap \nabla f^{*} (\aff(\mathcal{S}))$.
Moreover,
	\begin{align*}
	\Pro_{\aff( \mathcal{S}  )} (\nabla f(y)) -\nabla f(y)   &= \Pro_{p_{0} +M} (\nabla f(y)) - \nabla f(y) \quad (\text{by \cref{eq:M}})\\
	&= p_{0}  + \Pro_{ M} (\nabla f(y)-p_{0} ) -\nabla f(y) \quad (\text{by \cite[Proposition~3.19]{BC2017}})\\
	&=- \Pro_{ M^{\perp}} (\nabla f(y)-p_{0}  ) \in M^{\perp},   \quad (\text{by \cite[Theorem~5.8]{D2012}})
	\end{align*}
	which, due to \cref{eq:M}, implies that $\nabla f^{*} \Pro_{\aff( \mathcal{S}  )} (\nabla f(y)) \in  \nabla f ^{*}(\nabla f (y) + M^{\perp})$.
Altogether, the desired result follows from \cref{theorem:psuCCS:forward:cap} above.
\end{proof}

\begin{corollary} \label{cor:equi:Ef:CCfS:right}
Suppose that $f$ is Legendre and that $  \Pro_{\aff( \mathcal{S}  )} \circ \nabla f  \left(  \overrightarrow{E}_{f}(\mathcal{S})  \right) \subseteq \inte \dom f^{*}$ $($e.g., $\dom f^{*} =\mathcal{H}$$)$.  Then $\overrightarrow{E}_{f}(\mathcal{S}) \neq \varnothing$ if and only if
	  $\overrightarrow{\CCO{}}^{ps}_{f}(\mathcal{S}) \neq \varnothing$.
\end{corollary}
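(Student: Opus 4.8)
The plan is to follow the template of the proof of \cref{cor:equi:Ef:CCfS}, relying on \cref{defn:CCS:Bregman:forward}\cref{defn:CCS:Bregman:forward:ps} and \cref{theorem:psuCCS:forward}\cref{theorem:psuCCS:forward:P}. The equivalence splits into two implications, one of which is essentially immediate.

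First I would dispose of the implication $\overrightarrow{\CCO{}}^{ps}_{f}(\mathcal{S}) \neq \varnothing \Rightarrow \overrightarrow{E}_{f}(\mathcal{S}) \neq \varnothing$, which needs no hypothesis at all: by \cref{defn:CCS:Bregman:forward}\cref{defn:CCS:Bregman:forward:ps} we have $\overrightarrow{\CCO{}}^{ps}_{f}(\mathcal{S}) = \left( \nabla f^{*}(\aff \mathcal{S}) \right) \cap \overrightarrow{E}_{f}(\mathcal{S}) \subseteq \overrightarrow{E}_{f}(\mathcal{S})$, so nonemptiness of the left-hand side forces nonemptiness of the right-hand side.

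For the converse, I would fix an arbitrary $y \in \overrightarrow{E}_{f}(\mathcal{S})$. Since $\Pro_{\aff(\mathcal{S})}(\nabla f(y))$ lies in the image set $\Pro_{\aff(\mathcal{S})} \circ \nabla f \left( \overrightarrow{E}_{f}(\mathcal{S}) \right)$, the standing hypothesis yields $\Pro_{\aff(\mathcal{S})}(\nabla f(y)) \in \inte \dom f^{*}$ — precisely the extra assumption required to invoke \cref{theorem:psuCCS:forward}\cref{theorem:psuCCS:forward:P}. That result then gives $\overrightarrow{\CCO{}}^{ps}_{f}(\mathcal{S}) = \nabla f^{*}\left( \Pro_{\aff(\mathcal{S})}(\nabla f(y)) \right)$, a single well-defined point (recall that $\nabla f^{*}$ is single-valued on $\inte \dom f^{*}$ by \cref{fact:nablaf:nablaf*:id}), hence nonempty. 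The parenthetical special case $\dom f^{*} = \mathcal{H}$ is simply the situation in which $\inte \dom f^{*} = \mathcal{H}$ and the containment hypothesis holds automatically.

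I do not anticipate any genuine obstacle, since all the substantive content is already carried out in \cref{theorem:psuCCS:forward}; the present statement is just the packaging of that theorem into an existence equivalence. The only point requiring a moment's care is that the hypothesis is phrased as an inclusion for the whole image $\Pro_{\aff(\mathcal{S})} \circ \nabla f \left( \overrightarrow{E}_{f}(\mathcal{S}) \right)$, so one must note explicitly that it may legitimately be specialized to any single chosen $y \in \overrightarrow{E}_{f}(\mathcal{S})$; the rest is a direct citation of the earlier results.
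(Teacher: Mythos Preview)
Your proposal is correct and follows essentially the same approach as the paper, which simply cites \cref{defn:CCS:Bregman:forward}\cref{defn:CCS:Bregman:forward:ps} and \cref{theorem:psuCCS:forward}\cref{theorem:psuCCS:forward:P}. Your write-up merely unpacks those two citations into the obvious forward and backward implications.
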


\begin{proof}
 This follows easily  from  \cref{defn:CCS:Bregman:forward}\cref{defn:CCS:Bregman:forward:ps}.
and \cref{theorem:psuCCS:forward}\cref{theorem:psuCCS:forward:P}.
\end{proof}

The following result provides an explicit formula for forward Bregman pseudo-circumcenters.
\begin{theorem} \label{thm:LinIndpPformula:TpseudoCCS}
	Suppose that $f$ is Legendre, that $p_{0}, p_{1}, \ldots, p_{m}$ are affinely independent and that $\aff ( \mathcal{S}) \subseteq \inte \dom f^{*}$.
	Then $\overrightarrow{\CCO{}}^{ps}_{f}(\mathcal{S}) \neq \varnothing$.
	Moreover,
	\begin{align} \label{EQ:thm:LinIndpPformula:TpseudoCCS:formula}
	\overrightarrow{\CCO{}}^{ps}_{f}(\mathcal{S})= \nabla f^{*} \left( p_{0}+\alpha_{1}(p_{1}-p_{0} )+ \cdots+ \alpha_{m}(p_{m}-p_{0}) \right),
	\end{align}
	where
	\begin{align} \label{eq:thm:LinIndpPformula:TpseudoCCS}
	\begin{pmatrix}
	\alpha_{1} \\
	\vdots\\
	\alpha_{m} \\
	\end{pmatrix}
	:=
	G( p_{1}-p_{0} ,\ldots,p_{m}-p_{0} )^{-1}
	\begin{pmatrix}
	\eta_{1} -\innp{p_{0}, p_{1}-p_{0} }\\
	\vdots\\
	\eta_{m}-\innp{p_{0}, p_{m}-p_{0}} \\
	\end{pmatrix}.
	\end{align}
\end{theorem}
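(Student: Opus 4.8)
The plan is to run the forward analogue of the proof of \cref{thm:unique:LinIndpPformula}, reducing everything to \cref{lemma:CCSpsF} together with the bijectivity of $\nabla f$. First I would note that, by hypothesis, $p_{0}, p_{1}, \ldots, p_{m}$ are affinely independent, so \cite[Fact~2.8]{BOyW2018} makes $p_{1}-p_{0}, \ldots, p_{m}-p_{0}$ linearly independent, whence \cite[Fact~2.13]{BOyW2018} makes the Gram matrix $G(p_{1}-p_{0},\ldots,p_{m}-p_{0})$ invertible. Consequently the vector $(\alpha_{1},\ldots,\alpha_{m})^{\intercal}$ in \cref{eq:thm:LinIndpPformula:TpseudoCCS} is well defined, and the point $q := p_{0}+\alpha_{1}(p_{1}-p_{0})+\cdots+\alpha_{m}(p_{m}-p_{0})$ is exactly the one furnished by \cref{lemma:CCSpsF}, so that
\[
\left\{ x \in \aff(\mathcal{S}) ~:~ (\forall i \in \I)\ \innp{x,p_{i}-p_{0}} = \eta_{i} \right\} = \{q\}.
\]

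Next I would rewrite $\overrightarrow{\CCO{}}^{ps}_{f}(\mathcal{S})$ in terms of $q$. By \cref{defn:CCS:Bregman:forward}\cref{defn:CCS:Bregman:forward:ps}, $\overrightarrow{\CCO{}}^{ps}_{f}(\mathcal{S}) = \left(\nabla f^{*}(\aff \mathcal{S})\right) \cap \overrightarrow{E}_{f}(\mathcal{S})$. Since $f$ is Legendre, \cref{fact:nablaf:nablaf*:id} tells us $\nabla f^{*} : \inte \dom f^{*} \to \inte \dom f$ is a bijection with inverse $\nabla f$; as $\aff \mathcal{S} \subseteq \inte \dom f^{*}$ by hypothesis, the set $\nabla f^{*}(\aff \mathcal{S})$ is a well-defined subset of $\inte \dom f$ and, for $y \in \inte \dom f$, one has $y \in \nabla f^{*}(\aff \mathcal{S}) \Leftrightarrow \nabla f(y) \in \aff \mathcal{S}$. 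Combining this with \cref{lemma:xinEp1pm:right}, which characterizes $y \in \overrightarrow{E}_{f}(\mathcal{S})$ by $y \in \inte \dom f$ and $\innp{\nabla f(y),p_{i}-p_{0}} = \eta_{i}$ for every $i \in \I$, I would obtain that $y \in \overrightarrow{\CCO{}}^{ps}_{f}(\mathcal{S})$ if and only if $\nabla f(y)$ lies in the displayed singleton, i.e.\ $\nabla f(y) = q$, i.e.\ $y = \nabla f^{*}(q)$ --- note $q \in \aff \mathcal{S} \subseteq \inte \dom f^{*}$, so $\nabla f^{*}(q)$ is legitimate. Hence $\overrightarrow{\CCO{}}^{ps}_{f}(\mathcal{S}) = \{\nabla f^{*}(q)\}$, which is nonempty and equals the formula \cref{EQ:thm:LinIndpPformula:TpseudoCCS}.

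Alternatively one may quote \cref{prop:psuCCS:forward:matrix} directly: invertibility of the coefficient matrix there makes its linear system uniquely solvable with solution \cref{eq:thm:LinIndpPformula:TpseudoCCS}, collapsing that proposition's description to the single point $\nabla f^{*}\!\left(p_{0}+\sum_{i=1}^{m}\alpha_{i}(p_{i}-p_{0})\right)$. Either way the argument is essentially bookkeeping; the only place where genuine care is needed is the domain check, namely confirming that the candidate $q$ really belongs to $\inte \dom f^{*}$ so that $\nabla f^{*}$ may be applied to it --- and this is precisely where the standing hypothesis $\aff \mathcal{S} \subseteq \inte \dom f^{*}$ enters. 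I do not anticipate any obstacle beyond this.
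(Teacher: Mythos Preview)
Your argument is correct and mirrors the paper's own proof almost exactly: both invoke \cref{lemma:CCSpsF} to pin down the unique point $q\in\aff(\mathcal{S})$ satisfying $\innp{q,p_i-p_0}=\eta_i$, then use \cref{defn:CCS:Bregman:forward}\cref{defn:CCS:Bregman:forward:ps}, \cref{lemma:xinEp1pm:right}, and the bijectivity of $\nabla f$ from \cref{fact:nablaf:nablaf*:id} (together with $\aff(\mathcal{S})\subseteq\inte\dom f^{*}$) to identify $\overrightarrow{\CCO{}}^{ps}_{f}(\mathcal{S})$ with $\nabla f^{*}(q)$. Your alternative via \cref{prop:psuCCS:forward:matrix} is also valid and parallels how the paper handles the backward analogue in \cref{thm:unique:LinIndpPformula}.
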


\begin{proof}
	Denote by $\bar{z} :=p_{0}+\alpha_{1}(p_{1}-p_{0} )+ \cdots+ \alpha_{m}(p_{m}-p_{0})$,
	where $(\alpha_{1}, \ldots, \alpha_{m})^{\intercal}$ is defined as   \cref{eq:thm:LinIndpPformula:TpseudoCCS} above.
	According to \cref{lemma:CCSpsF},
	\begin{align} \label{eq:thm:LinIndpPformula:TpseudoCCS:S}
	\left\{ x \in \aff ( \mathcal{S})  ~:~  (\forall i \in \I) ~ \innp{x , p_{i} -p_{0}} = \eta_{i}  \right\}  = \{\bar{z}\}.
	\end{align}
	
	Notice that,  by \cref{fact:nablaf:nablaf*:id}, $\bar{z} \in \aff ( \mathcal{S}) \subseteq \inte \dom f^{*}$ implies that $\nabla f^{*} (\bar{z}) \in  \inte \dom f$ and $\bar{z} =\nabla f \left( \nabla f^{*} (\bar{z}) \right)$.
Because $\nabla f^{*} \left( \aff ( \mathcal{S}) \right)  \cap \inte \dom f = \nabla f^{*} \left( \aff ( \mathcal{S}) \right)$, applying \cref{fact:nablaf:nablaf*:id} and \cref{eq:thm:LinIndpPformula:TpseudoCCS:S}, we obtain that
\begin{align}\label{eq:thm:LinIndpPformula:TpseudoCCS:y}
\left\{ y \in \nabla f^{*} \left( \aff ( \mathcal{S}) \right)  \cap \inte \dom f~:~  (\forall i \in \I) ~ \innp{\nabla f (y) , p_{i} -p_{0}} = \eta_{i}  \right\}  = \{ \nabla f^{*} (\bar{z}) \}.
\end{align}
	On the other hand, due to \cref{defn:CCS:Bregman:forward}\cref{defn:CCS:Bregman:forward:ps} and  \cref{lemma:xinEp1pm:right},
	\begin{align}\label{eq:thm:LinIndpPformula:TpseudoCCS:CCS}
	\overrightarrow{\CCO{}}^{ps}_{f}(\mathcal{S})=	\left\{ y \in \nabla f^{*} \left( \aff ( \mathcal{S}) \right)  \cap \inte \dom f~:~  (\forall i \in \I) ~ \innp{\nabla f (y) , p_{i} -p_{0}} = \eta_{i}  \right\}.
	\end{align}
	Clearly, \cref{eq:thm:LinIndpPformula:TpseudoCCS:y} and \cref{eq:thm:LinIndpPformula:TpseudoCCS:CCS} entail the required result.
\end{proof}

\section{Duality correspondence} \label{sec:Miscellaneous}
Duality is the key for connections between  backward and forward Bregman (pseudo-)circumcenters.

Let $f \in \Gamma_{0} (\mathcal{H}) $ be G\^ateaux  differentiable  on  $\inte \dom f \neq \varnothing$. Suppose that
 \begin{empheq}[box=\mybluebox]{equation*}
 \mathcal{S}:= \{q_{0}, q_{1}, \ldots, q_{m} \} \subseteq    \dom f \text{ and }  \mathcal{S} \text{ is nonempty}.
 \end{empheq}
 Set
\begin{align} \label{eq:R:Eright}
\overrightarrow{E}_{f}(\mathcal{S} ):= \{ y \in  \inte \dom f~:~  \D_{f} (q_{0},y) =\D_{f} (q_{1},y) =\cdots =\D_{f} (q_{m},y) \}.
\end{align}
If $\mathcal{S} \subseteq \inte \dom f$, we set
 \begin{align} \label{eq:R:Ef}
\overleftarrow{E}_{f}(\mathcal{S}):= \{ x \in \dom f ~:~  \D_{f} (x,q_{0}) =\D_{f} (x, q_{1}) =\cdots =\D_{f} (x, q_{m}) \}.
\end{align}


\begin{theorem} \label{theor:CCS:Rel}
	Suppose that $f$ is Legendre and that $\mathcal{S} \subseteq \inte \dom f$. Then the following hold.
	\begin{enumerate}
		\item \label{theor:CCS:Rel:E} $\overleftarrow{E}_{f}(\mathcal{S}) \cap \inte \dom f= \nabla f^{*} \left( \overrightarrow{E}_{f^{*}}\left( \nabla f \left( \mathcal{S} \right)\right) \right)$.
		\item \label{theor:CCS:Rel:forwardE} $\overrightarrow{E}_{f}(\mathcal{S}) = \nabla f^{*} \left(   \overleftarrow{E}_{f^{*}}\left( \nabla f \left( \mathcal{S}\right) \right) \cap \inte \dom f^{*}  \right)$.
		\item \label{theor:CCS:Rel:CCS}  $	\overleftarrow{\CCO{}}^{ps}_{f} ( \mathcal{S} )  \cap \inte \dom f = \nabla f^{*} \left(  	\overrightarrow{\CCO{}}^{ps}_{f^{*}} \left( \nabla f (\mathcal{S} )\right) \right)$.
		\item \label{theor:CCS:Rel:forwardCCS} $ \overrightarrow{\CCO{}}_{f}(\mathcal{S}) = \aff (\mathcal{S}) \cap \nabla f^{*} \left(   \overleftarrow{E}_{f^{*}}\left( \nabla f \left( \mathcal{S}\right) \right)   \cap \inte \dom f^{*}  \right) = \nabla f^{*} \left(   \nabla f \left(    \aff (\mathcal{S})  \right)  \cap  \overleftarrow{E}_{f^{*}}\left( \nabla f \left( \mathcal{S}\right) \right)    \right) $.
	\end{enumerate}
\end{theorem}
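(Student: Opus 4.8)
The plan is to reduce all four identities to the self-duality of the Bregman distance, \cref{fact:PropertD}, combined with the bijection $\nabla f \colon \inte\dom f \to \inte\dom f^{*}$ with inverse $\nabla f^{*}$ supplied by \cref{fact:nablaf:nablaf*:id} (which also guarantees that $f^{*}$ is Legendre with $f^{**}=f$, so every construction already made for $f$ applies verbatim to $f^{*}$). The unifying device is the change of variable $x \leftrightarrow \nabla f(x)$, together with the observation that $\nabla f^{*}$ restricted to $\inte\dom f^{*}$ is injective with image $\inte\dom f$, hence commutes with intersections against any subsets of $\inte\dom f^{*}$.

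For \cref{theor:CCS:Rel:E}: if $x \in \overleftarrow{E}_{f}(\mathcal{S}) \cap \inte\dom f$, then $x$ and every $q_{i}$ lie in $\inte\dom f$, so \cref{fact:PropertD} gives $\D_{f}(x,q_{i}) = \D_{f^{*}}(\nabla f(q_{i}),\nabla f(x))$ for each $i$; thus $\nabla f(x) \in \inte\dom f^{*}$ equalizes the quantities $\D_{f^{*}}(\nabla f(q_{i}),\cdot)$ over $i$, i.e.\ $\nabla f(x) \in \overrightarrow{E}_{f^{*}}(\nabla f(\mathcal{S}))$ in the sense of \cref{eq:R:Eright} applied to $f^{*}$ and the set $\nabla f(\mathcal{S}) \subseteq \inte\dom f^{*}$, whence $x = \nabla f^{*}(\nabla f(x))$ lies in $\nabla f^{*}(\overrightarrow{E}_{f^{*}}(\nabla f(\mathcal{S})))$. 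The reverse inclusion is the same chain read backwards, using $\overrightarrow{E}_{f^{*}}(\nabla f(\mathcal{S})) \subseteq \inte\dom f^{*}$ and $\nabla f^{*}(\inte\dom f^{*}) = \inte\dom f$. Part \cref{theor:CCS:Rel:forwardE} is proved identically, the only asymmetry being that $\overrightarrow{E}_{f}(\mathcal{S})$ is already contained in $\inte\dom f$, whereas $\overleftarrow{E}_{f^{*}}(\nabla f(\mathcal{S}))$ (in the sense of \cref{eq:R:Ef}) may meet $\bd\dom f^{*}$, which is exactly why the intersection with $\inte\dom f^{*}$ appears before $\nabla f^{*}$ is applied.

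For \cref{theor:CCS:Rel:CCS}, I would unwind the definitions: $\overleftarrow{\CCO{}}^{ps}_{f}(\mathcal{S}) = \aff(\nabla f(\mathcal{S})) \cap \overleftarrow{E}_{f}(\mathcal{S})$ by \cref{defn:CCS:Bregman:left}\cref{defn:CCS:Bregman:left:ps}, while $\overrightarrow{\CCO{}}^{ps}_{f^{*}}(\nabla f(\mathcal{S})) = \nabla f(\aff(\nabla f(\mathcal{S}))) \cap \overrightarrow{E}_{f^{*}}(\nabla f(\mathcal{S}))$ by \cref{defn:CCS:Bregman:forward}\cref{defn:CCS:Bregman:forward:ps} and $f^{**}=f$. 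Intersecting the identity in \cref{theor:CCS:Rel:E} with $\aff(\nabla f(\mathcal{S}))$, and noting that for $y \in \overrightarrow{E}_{f^{*}}(\nabla f(\mathcal{S})) \subseteq \inte\dom f^{*}$ one has $\nabla f^{*}(y) \in \aff(\nabla f(\mathcal{S}))$ iff $\nabla f^{*}(y) \in \aff(\nabla f(\mathcal{S})) \cap \inte\dom f$ iff $y \in \nabla f(\aff(\nabla f(\mathcal{S})))$, the right-hand side collapses to $\nabla f^{*}(\overrightarrow{E}_{f^{*}}(\nabla f(\mathcal{S})) \cap \nabla f(\aff(\nabla f(\mathcal{S})))) = \nabla f^{*}(\overrightarrow{\CCO{}}^{ps}_{f^{*}}(\nabla f(\mathcal{S})))$. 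Part \cref{theor:CCS:Rel:forwardCCS} follows the same pattern: substitute \cref{theor:CCS:Rel:forwardE} into $\overrightarrow{\CCO{}}_{f}(\mathcal{S}) = \aff(\mathcal{S}) \cap \overrightarrow{E}_{f}(\mathcal{S})$ (\cref{defn:CCS:Bregman:forward}\cref{defn:CCS:Bregman:forward:}) for the first equality, then observe that $\nabla f(\aff(\mathcal{S})) \subseteq \inte\dom f^{*}$ automatically absorbs the ``$\cap\,\inte\dom f^{*}$'', and push $\nabla f^{*}$ through the intersection with $\aff(\mathcal{S})$ exactly as above to obtain the second equality.

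The only genuine difficulty is the domain bookkeeping: at every step one must track which sets lie in $\inte\dom f$ versus $\dom f$ (and in $\inte\dom f^{*}$ versus $\dom f^{*}$), and verify that the sets being intersected are contained in $\inte\dom f^{*}$ so that $\nabla f^{*}$ is injective on them and commutes with those intersections. Beyond \cref{fact:PropertD} and \cref{fact:nablaf:nablaf*:id}, no inequalities, optimality conditions, or projection arguments are needed.
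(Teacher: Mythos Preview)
Your proposal is correct and follows essentially the same route as the paper: both rely solely on \cref{fact:PropertD} for the duality $\D_{f}(x,q_{i})=\D_{f^{*}}(\nabla f(q_{i}),\nabla f(x))$ and on \cref{fact:nablaf:nablaf*:id} for the bijection $\nabla f\colon\inte\dom f\to\inte\dom f^{*}$, then unwind \cref{defn:CCS:Bregman:left}\cref{defn:CCS:Bregman:left:ps} and \cref{defn:CCS:Bregman:forward} and push $\nabla f^{*}$ through the relevant intersections. If anything, your write-up is a touch more explicit than the paper's about the domain bookkeeping (why the ``$\cap\,\inte\dom f^{*}$'' appears in \cref{theor:CCS:Rel:forwardE} but not in \cref{theor:CCS:Rel:E}, and why injectivity of $\nabla f^{*}$ on $\inte\dom f^{*}$ lets it commute with intersections), but the underlying argument is the same.
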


\begin{proof}
\cref{theor:CCS:Rel:E}: Because $f$ is Legendre, due to \cref{def:Legendre} and   \cref{fact:PropertD},
	\begin{align} \label{prop:Rel:E:D}
(\forall x \in \inte \dom f) ~(\forall i \in \I \cup \{0\}) \quad \D_{f} (x,q_{i})   = D_{f^{*}}(\nabla f (q_{i}),\nabla f (x)).
	\end{align}
	On the other hand, by \cref{eq:R:Eright},
	\begin{align}\label{prop:Rel:E:f*}
	\overrightarrow{E}_{f^{*}} \left( \nabla f \left(\mathcal{S} \right) \right)= \left\{ y \in  \inte  \dom f^{*}~:~  (\forall i \in \I) ~ \D_{f^{*}} \left(\nabla f (q_{0}),y \right)  =\D_{f^{*}} \left(\nabla f (q_{i}),y \right) \right\}.
	\end{align}
	Let $x \in \mathcal{H}$. Then
	\begin{align*}
&	x \in \overleftarrow{E}_{f}(\mathcal{S})  \cap \inte \dom f \\
\Leftrightarrow & x \in \inte \dom f \text{ and } (\forall i \in \I) ~ \D_{f} (x,q_{0}) =\D_{f} (x, q_{i}) \quad (\text{by \cref{eq:R:Ef}})\\
 	\Leftrightarrow & x \in \inte \dom f \text{ and }  (\forall i \in \I) ~D_{f^{*}}(\nabla (q_{0}),\nabla f (x)) =D_{f^{*}}(\nabla (q_{i}),\nabla f (x)) \quad (\text{by \cref{prop:Rel:E:D}})\\
 	\Leftrightarrow	& \nabla f (x) \in \inte  \dom f^{*} \text{ and }  (\forall i \in \I) ~ D_{f^{*}}(\nabla (q_{0}),\nabla f (x)) =D_{f^{*}}(\nabla (q_{i}),\nabla f (x)) \quad (\text{by \cref{fact:nablaf:nablaf*:id}})\\
 	\Leftrightarrow & \nabla f (x) \in \overrightarrow{E}_{f^{*}} \left( \nabla f \left(\mathcal{S} \right) \right) \quad (\text{by \cref{prop:Rel:E:f*}})\\
 	\Leftrightarrow	 & x \in  \nabla f^{*} \left( \overrightarrow{E}_{f^{*}}\left( \nabla f \left( \mathcal{S} \right)\right) \right), \quad (\text{by \cref{fact:nablaf:nablaf*:id}})
	\end{align*}
which verifies \cref{theor:CCS:Rel:E}.

\cref{theor:CCS:Rel:forwardE}:  The proof is similar to that of  \cref{theor:CCS:Rel:E}.

\cref{theor:CCS:Rel:CCS}: Based on	\cref{defn:CCS:Bregman:left}\cref{defn:CCS:Bregman:left:ps} and \cref{defn:CCS:Bregman:forward}\cref{defn:CCS:Bregman:forward:ps},
\begin{align*}
\overleftarrow{\CCO{}}^{ps}_{f} ( \mathcal{S} )  \cap \inte \dom f&=   \aff ( \nabla f (\mathcal{S} ))\cap \overleftarrow{E}_{f}( \mathcal{S} ) \cap \inte \dom f\\
&=   \aff ( \nabla f (  \mathcal{S} ))\cap \nabla f^{*} \left( \overrightarrow{E}_{f^{*}}\left( \nabla f \left( \mathcal{S} \right)\right) \right) \quad (\text{by  \cref{theor:CCS:Rel:E} above})\\
&=  \nabla f^{*} \left( \nabla f \left(   \aff ( \nabla f (\mathcal{S} )) \right) \right) \cap \nabla f^{*} \left( \overrightarrow{E}_{f}\left( \nabla f \left( \mathcal{S} \right)\right) \right)  \quad \left(\text{\cref{fact:nablaf:nablaf*:id} implies $\nabla f^{*}\nabla f =\Id$}\right)\\
&=  \nabla f^{*} \left( \nabla f \left(   \aff ( \nabla f (\mathcal{S} )) \right) \cap \overrightarrow{E}_{f^{*}}\left( \nabla f \left( \mathcal{S} \right)\right)  \right) \quad \left(\text{\cref{fact:nablaf:nablaf*:id} states $\nabla f^{*}$ is bijective}\right)\\
&=	 \nabla f^{*}  \left( 	\overrightarrow{\CCO{}}^{ps}_{f^{*}}  ( \nabla f (\mathcal{S} )) \right).
\end{align*}

\cref{theor:CCS:Rel:forwardCCS}: This follows immediately from \cref{defn:CCS:Bregman:forward}\cref{defn:CCS:Bregman:forward:}, \cref{fact:nablaf:nablaf*:id}, and    \cref{theor:CCS:Rel:forwardE} above.
\end{proof}

\begin{corollary} \label{cor:CCS:Rel}
	Suppose that $f$ is Legendre with $\dom f$ and $\dom f^{*} $ being open. Then the following hold.
	\begin{enumerate}
		\item \label{cor:CCS:Rel:E} $\overleftarrow{E}_{f}(\mathcal{S}) = \nabla f^{*} \left( \overrightarrow{E}_{f^{*}}\left( \nabla f \left( \mathcal{S} \right)\right) \right)$.
		\item \label{cor:CCS:Rel:Eforward} $\overrightarrow{E}_{f}(\mathcal{S}) = \nabla f^{*} \left(   \overleftarrow{E}_{f^{*}}\left( \nabla f \left( \mathcal{S}\right) \right)  \right)$.
		\item \label{cor:CCS:Rel:CCS}  $	\overleftarrow{\CCO{}}^{ps}_{f} ( \mathcal{S} ) = \nabla f^{*} \left(  	\overrightarrow{\CCO{}}^{ps}_{f^{*}} \left( \nabla f (\mathcal{S} )\right) \right)$.
	\end{enumerate}
\end{corollary}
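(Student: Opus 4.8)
The plan is to obtain each of the three identities directly from the corresponding part of \cref{theor:CCS:Rel}, the point being that once $\dom f$ and $\dom f^{*}$ are open, every intersection with $\inte \dom f$ or $\inte \dom f^{*}$ appearing there becomes redundant. First I would record the two elementary reductions that drive the argument: openness of $\dom f$ gives $\inte \dom f = \dom f$, openness of $\dom f^{*}$ gives $\inte \dom f^{*} = \dom f^{*}$, and in particular $\mathcal{S} \subseteq \dom f = \inte \dom f$, so the standing hypotheses of \cref{theor:CCS:Rel} ($f$ Legendre and $\mathcal{S} \subseteq \inte \dom f$) are in force.

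For \cref{cor:CCS:Rel:E}, I would apply \cref{theor:CCS:Rel}\cref{theor:CCS:Rel:E} to get $\overleftarrow{E}_{f}(\mathcal{S}) \cap \inte \dom f = \nabla f^{*}\bigl(\overrightarrow{E}_{f^{*}}(\nabla f(\mathcal{S}))\bigr)$ and then note that, by the defining formula \cref{eq:R:Ef}, $\overleftarrow{E}_{f}(\mathcal{S}) \subseteq \dom f = \inte \dom f$, so the left-hand side equals $\overleftarrow{E}_{f}(\mathcal{S})$. For \cref{cor:CCS:Rel:Eforward}, I would start from \cref{theor:CCS:Rel}\cref{theor:CCS:Rel:forwardE}, namely $\overrightarrow{E}_{f}(\mathcal{S}) = \nabla f^{*}\bigl(\overleftarrow{E}_{f^{*}}(\nabla f(\mathcal{S})) \cap \inte \dom f^{*}\bigr)$, and use the $f^{*}$-analogue of \cref{eq:R:Ef} to see $\overleftarrow{E}_{f^{*}}(\nabla f(\mathcal{S})) \subseteq \dom f^{*} = \inte \dom f^{*}$, which deletes the inner intersection. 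For \cref{cor:CCS:Rel:CCS}, I would invoke \cref{theor:CCS:Rel}\cref{theor:CCS:Rel:CCS}, i.e.\ $\overleftarrow{\CCO{}}^{ps}_{f}(\mathcal{S}) \cap \inte \dom f = \nabla f^{*}\bigl(\overrightarrow{\CCO{}}^{ps}_{f^{*}}(\nabla f(\mathcal{S}))\bigr)$, and observe from \cref{defn:CCS:Bregman:left}\cref{defn:CCS:Bregman:left:ps} that $\overleftarrow{\CCO{}}^{ps}_{f}(\mathcal{S}) = \aff(\nabla f(\mathcal{S})) \cap \overleftarrow{E}_{f}(\mathcal{S}) \subseteq \overleftarrow{E}_{f}(\mathcal{S}) \subseteq \dom f = \inte \dom f$, so the left-hand intersection is again vacuous.

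I expect no genuine obstacle here: the corollary is precisely the openness specialization of \cref{theor:CCS:Rel}, and the only thing that needs attention is citing the defining formulas \cref{eq:R:Ef}, \cref{eq:R:Eright} (and their $f^{*}$-versions) to justify the containments $\overleftarrow{E} \subseteq \dom$ and $\overrightarrow{E} \subseteq \inte \dom$ that render the intersections trivial, and keeping straight which of $\dom f$, $\dom f^{*}$ is the relevant one in each of the three parts.
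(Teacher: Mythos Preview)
Your proposal is correct and follows essentially the same approach as the paper: both argue that openness of $\dom f$ and $\dom f^{*}$ makes the intersections with $\inte \dom f$ and $\inte \dom f^{*}$ in \cref{theor:CCS:Rel} redundant, citing \cref{eq:R:Ef} and \cref{defn:CCS:Bregman:left}\cref{defn:CCS:Bregman:left:ps} for the needed containments. Your write-up is a bit more explicit (in particular, you spell out that the standing hypothesis $\mathcal{S}\subseteq\dom f$ becomes $\mathcal{S}\subseteq\inte\dom f$ so that \cref{theor:CCS:Rel} applies), but the substance is identical.
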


\begin{proof}
	Taking \cref{eq:R:Ef} and \cref{defn:CCS:Bregman:left}\cref{defn:CCS:Bregman:left:ps} into account, we know that under the hypothesis that $\dom f$ and $\dom f^{*} $ are open, $\overleftarrow{E}_{f}(\mathcal{S}) \cap \inte \dom f = \overleftarrow{E}_{f}(\mathcal{S})$, $\overleftarrow{E}_{f^{*}}\left( \nabla f \left( \mathcal{S}\right) \right) \cap \inte \dom f^{*} = \overleftarrow{E}_{f^{*}}\left( \nabla f \left( \mathcal{S}\right) \right)$, and $	\overleftarrow{\CCO{}}^{ps}_{f} ( \mathcal{S} )  \cap \inte \dom f = 	\overleftarrow{\CCO{}}^{ps}_{f} ( \mathcal{S} ) $. Therefore, the result follows immediately from \cref{theor:CCS:Rel}.
\end{proof}


Although there are more generalizations of the classical circumcenter under Bregman distances,
the following results suggest that they can probably be deduced by Bregman (pseudo-) circumcenters defined in   \cref{defn:CCS:Bregman:left,defn:CCS:Bregman:forward}.
\begin{proposition} \label{prop:newCCS}
 Suppose that $f$ is Legendre.  Define
	\begin{align*}
	\stackrel{\mathlarger{\twoheadleftarrow}}{\CCO{}}_{f}  : \mathcal{P}(  \inte \dom f) \to  2^{\mathcal{H}} :  \mathcal{S}
 \mapsto
	\nabla f^{*} \left(    \aff \left(  \nabla f (\mathcal{S}) \right)  \right) \cap \overleftarrow{E}_{f}\left(\mathcal{S} \right).
	\end{align*}
Then $\left( \forall \mathcal{S} \in \mathcal{P}(  \inte  \dom f)  \right)$ $\stackrel{\mathlarger{\twoheadleftarrow}}{\CCO{}}_{f}(\mathcal{S})
=  \nabla f^{*} \left(   \overrightarrow{\CCO{}}_{f^{*}}\left( \nabla f (\mathcal{S}) \right) \right)$.
\end{proposition}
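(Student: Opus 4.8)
The plan is to unfold the definition of the forward Bregman circumcenter operator for $f^{*}$, transport it through the bijection $\nabla f^{*}$, and recognize the two resulting factors via the duality already established in \cref{theor:CCS:Rel}. First I would check that the right-hand side is meaningful: since $f$ is Legendre and $\mathcal{S}\in\mathcal{P}(\inte\dom f)$, \cref{fact:nablaf:nablaf*:id} tells us that $f^{*}$ is Legendre and that $\nabla f(\mathcal{S})\subseteq\inte\dom f^{*}\subseteq\dom f^{*}$; hence $\nabla f(\mathcal{S})\in\mathcal{P}(\dom f^{*})$ and $\overrightarrow{\CCO{}}_{f^{*}}\big(\nabla f(\mathcal{S})\big)$ is defined, and \cref{defn:CCS:Bregman:forward} applies with $f$ replaced by $f^{*}$. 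By \cref{defn:CCS:Bregman:forward}\cref{defn:CCS:Bregman:forward:} for $f^{*}$,
\[
\overrightarrow{\CCO{}}_{f^{*}}\big(\nabla f(\mathcal{S})\big)=\aff\big(\nabla f(\mathcal{S})\big)\cap\overrightarrow{E}_{f^{*}}\big(\nabla f(\mathcal{S})\big),
\]
and by that same definition $\overrightarrow{E}_{f^{*}}\big(\nabla f(\mathcal{S})\big)\subseteq\inte\dom f^{*}$.

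Next I would push $\nabla f^{*}$ inside the intersection. By \cref{fact:nablaf:nablaf*:id}, $\nabla f^{*}$ is a bijection from $\inte\dom f^{*}$ onto $\inte\dom f$; being an injective (partial) map, it distributes over intersections, so
\[
\nabla f^{*}\Big(\overrightarrow{\CCO{}}_{f^{*}}\big(\nabla f(\mathcal{S})\big)\Big)=\nabla f^{*}\big(\aff(\nabla f(\mathcal{S}))\big)\cap\nabla f^{*}\big(\overrightarrow{E}_{f^{*}}(\nabla f(\mathcal{S}))\big).
\]
Then I would invoke \cref{theor:CCS:Rel:E} (applicable since $f$ is Legendre and $\mathcal{S}\subseteq\inte\dom f$) to identify $\nabla f^{*}\big(\overrightarrow{E}_{f^{*}}(\nabla f(\mathcal{S}))\big)=\overleftarrow{E}_{f}(\mathcal{S})\cap\inte\dom f$. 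Finally, since $\nabla f^{*}\big(\aff(\nabla f(\mathcal{S}))\big)\subseteq\range\nabla f^{*}=\inte\dom f$ again by \cref{fact:nablaf:nablaf*:id}, the trailing $\cap\,\inte\dom f$ is redundant, and comparing with the defining formula for $\stackrel{\mathlarger{\twoheadleftarrow}}{\CCO{}}_{f}$ in the statement gives
\[
\nabla f^{*}\Big(\overrightarrow{\CCO{}}_{f^{*}}\big(\nabla f(\mathcal{S})\big)\Big)=\nabla f^{*}\big(\aff(\nabla f(\mathcal{S}))\big)\cap\overleftarrow{E}_{f}(\mathcal{S})=\stackrel{\mathlarger{\twoheadleftarrow}}{\CCO{}}_{f}(\mathcal{S}),
\]
as claimed.

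The argument is short, so there is no real computational obstacle; the one point to be careful about is the bookkeeping around $\nabla f^{*}$ being only a partial map. One must read $\nabla f^{*}(A)$ as the image of $A\cap\inte\dom f^{*}$, verify that injectivity of $\nabla f^{*}$ on $\inte\dom f^{*}$ still forces $\nabla f^{*}(A\cap B)=\nabla f^{*}(A)\cap\nabla f^{*}(B)$ even though $\aff(\nabla f(\mathcal{S}))$ need not be contained in $\inte\dom f^{*}$, and check that the $\inte\dom f$ produced by \cref{theor:CCS:Rel:E} is automatically absorbed by the range of $\nabla f^{*}$. With those observations in place, the statement follows by chaining \cref{fact:nablaf:nablaf*:id}, \cref{defn:CCS:Bregman:forward}, and \cref{theor:CCS:Rel:E}.
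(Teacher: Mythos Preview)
Your proposal is correct and follows essentially the same route as the paper: unfold \cref{defn:CCS:Bregman:forward}\cref{defn:CCS:Bregman:forward:} for $f^{*}$, use the bijectivity of $\nabla f^{*}$ from \cref{fact:nablaf:nablaf*:id} to distribute it over the intersection, invoke \cref{theor:CCS:Rel}\cref{theor:CCS:Rel:E}, and absorb the redundant $\cap\,\inte\dom f$. Your extra remarks about the partial-map bookkeeping are a welcome clarification but do not change the argument.
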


\begin{proof}
Employing \cref{defn:CCS:Bregman:forward}\cref{defn:CCS:Bregman:forward:}, \cref{fact:nablaf:nablaf*:id}, and \cref{theor:CCS:Rel}\cref{theor:CCS:Rel:E}, we observe that for every $\mathcal{S} \in  \mathcal{P}( \inte \dom f) $,
\begin{align*}
 \nabla f^{*} \left(   \overrightarrow{\CCO{}}_{f^{*}}\left( \nabla f (\mathcal{S}) \right) \right)
 &~=~ \nabla f^{*} \left(     \aff   \left( \nabla f (\mathcal{S}) \right)
 \cap 	\overrightarrow{E}_{f^{*}}\left( \nabla f (\mathcal{S}) \right) \right) \\
& ~=~ \nabla f^{*} \left(     \aff   \left( \nabla f (\mathcal{S}) \right) \right)
\cap \nabla f^{*} \left( 	\overrightarrow{E}_{f^{*}}\left( \nabla f (\mathcal{S}) \right) \right)\\
 & ~=~ \nabla f^{*} \left(     \aff   \left( \nabla f (\mathcal{S}) \right) \right) \cap \overleftarrow{E}_{f}(\mathcal{S})
 \cap \inte \dom f\\
&= \nabla f^{*} \left(     \aff   \left( \nabla f (\mathcal{S}) \right) \right) \cap \overleftarrow{E}_{f}(\mathcal{S}) \\
 & ~=~ \, \stackrel{\mathlarger{\twoheadleftarrow}}{\CCO{}}_{f}(\mathcal{S}).
\end{align*}
\end{proof}

\begin{proposition} \label{prop:newCCSps}
	 Suppose that $f$ is  Legendre. Define
	\begin{align*}
		\stackrel{\mathlarger{\twoheadleftarrow}}{\CCO{}}^{ps}_{f}  : \mathcal{P}( \inte \dom f) \to  2^{\mathcal{H}} :
\mathcal{S} \mapsto
	\aff (\mathcal{S})\cap \overleftarrow{E}_{f^*}( \nabla f (\mathcal{S}) ).
	\end{align*}
	Then $\stackrel{\mathlarger{\twoheadleftarrow}}{\CCO{}}^{ps}_{f}(\mathcal{S}) = \overleftarrow{\CCO{}}^{ps}_{f^*}(\nabla f(\mathcal{S}))$.
\end{proposition}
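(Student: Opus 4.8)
The plan is to unfold both sides of the claimed identity using the definitions and Legendre duality; the argument is almost purely a matter of bookkeeping. First I would record the consequences of $f$ being Legendre: by \cref{fact:nablaf:nablaf*:id}, $f^*$ is Legendre as well, $\nabla f\colon\inte\dom f\to\inte\dom f^*$ is a bijection with inverse $\nabla f^*$, and in particular $\nabla f^*\circ\nabla f=\Id$ on $\inte\dom f$. Hence, for $\mathcal{S}\in\mathcal{P}(\inte\dom f)$, the image $\nabla f(\mathcal{S})$ is a nonempty finite subset of $\inte\dom f^*$, so it lies in the domain $\mathcal{P}(\inte\dom f^*)$ of the operator $\overleftarrow{\CCO{}}^{ps}_{f^*}$; thus the right-hand side is well-defined, and moreover $\nabla f^*\big(\nabla f(\mathcal{S})\big)=\mathcal{S}$.

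Next I would apply \cref{defn:CCS:Bregman:left}\cref{defn:CCS:Bregman:left:ps} with $f$ replaced by $f^*$ and $K$ replaced by $\nabla f(\mathcal{S})$, which gives
\[
\overleftarrow{\CCO{}}^{ps}_{f^*}\big(\nabla f(\mathcal{S})\big)=\aff\big(\nabla f^*(\nabla f(\mathcal{S}))\big)\cap\overleftarrow{E}_{f^*}\big(\nabla f(\mathcal{S})\big).
\]
Substituting the identity $\nabla f^*(\nabla f(\mathcal{S}))=\mathcal{S}$ from the previous step into the first factor yields $\aff(\mathcal{S})\cap\overleftarrow{E}_{f^*}\big(\nabla f(\mathcal{S})\big)$, and by the defining formula of $\stackrel{\mathlarger{\twoheadleftarrow}}{\CCO{}}^{ps}_{f}$ this is exactly $\stackrel{\mathlarger{\twoheadleftarrow}}{\CCO{}}^{ps}_{f}(\mathcal{S})$. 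Comparing the two expressions finishes the proof.

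There is no genuine obstacle here; the one point deserving attention is keeping straight which function's gradient appears in the ``affine hull of gradients'' factor of the pseudo-circumcenter operator — it is $\nabla f^*$ that occurs in $\overleftarrow{\CCO{}}^{ps}_{f^*}$, and it is precisely the cancellation $\nabla f^*\nabla f=\Id$ that collapses $\aff\big(\nabla f^*(\nabla f(\mathcal{S}))\big)$ back to $\aff(\mathcal{S})$, while the set $\overleftarrow{E}_{f^*}(\nabla f(\mathcal{S}))$ is literally the same object on both sides. Alternatively, one could obtain the statement as a corollary of \cref{theor:CCS:Rel}\cref{theor:CCS:Rel:CCS} applied with $f^*$ in the role of $f$ (noting $f^{**}=f$), but the direct unfolding above is the shortest route.
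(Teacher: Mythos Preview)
Your proposal is correct and follows essentially the same approach as the paper: both invoke \cref{fact:nablaf:nablaf*:id} to obtain $\nabla f^{*}(\nabla f(\mathcal{S}))=\mathcal{S}$, then unfold \cref{defn:CCS:Bregman:left}\cref{defn:CCS:Bregman:left:ps} with $f^{*}$ in place of $f$ and $\nabla f(\mathcal{S})$ in place of $K$ to match the defining formula of $\stackrel{\mathlarger{\twoheadleftarrow}}{\CCO{}}^{ps}_{f}(\mathcal{S})$. Your version is slightly more explicit about well-definedness, but the argument is the same.
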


\begin{proof}
	According to \cref{fact:nablaf:nablaf*:id}, $\left(\forall \mathcal{S} \in \mathcal{P}( \inte \dom f) \right)$ $
\mathcal{S}= \nabla f^* \left( \nabla f(\mathcal{S}) \right)$. Hence, by \cref{defn:CCS:Bregman:left}\cref{defn:CCS:Bregman:left:ps}, we obtain that
	\begin{align*}
	\left(\forall \mathcal{S} \in \mathcal{P}( \inte \dom f) \right) \quad  \stackrel{\mathlarger{\twoheadleftarrow}}{\CCO{}}^{ps}_{f} (\mathcal{S})
 = 	\aff \left( \nabla f^* ( \nabla f(\mathcal{S})) \right) \cap \overleftarrow{E}_{f^*}( \nabla f(\mathcal{S}))
 =  \overleftarrow{\CCO{}}^{ps}_{f^*} (\nabla f(\mathcal{S})).
	\end{align*}
\end{proof}

\section{A comparison of classical circumcenters and Bregman circumcenters}\label{s:compare}

Under simplified assumptions, we summarize results in Section~\ref{sec:ForwardBregmancircumcenters} and Section~\ref{sec:BackwardBregmancircumcenters}
on the existence and uniqueness of backward and forward Bregman (pseudo-)circumcenters.
 \begin{corollary}
 	The following assertions hold.
 	  \begin{enumerate}
  	\item  Suppose that $\mathcal{H} =\mathbb{R}^{n}$, that $f$ is Legendre such that $\dom f^{*}$ is open, that $\overleftarrow{E}_{f}(\mathcal{S})  \neq \varnothing$,  that $f$	allows forward Bregman projections,   that $\aff (\mathcal{S}) \subseteq \inte \dom f$, and that $\nabla f (\aff (\mathcal{S})  ) $ is a closed  affine subspace.    Then $\left( \forall z \in \overleftarrow{E}_{f}(\mathcal{S}) \right)$ $ \overrightarrow{\Pro}^{f}_{\aff( \mathcal{S}  )} (z) \in \overleftarrow{\CCO{}}_{f}(\mathcal{S})$. $($See, \cref{theorem:formualCCS:Pleft}\cref{theorem:formualCCS:Pleft:P}.$)$
  	
  	\item  Suppose that $\overleftarrow{E}_{f}(\mathcal{S}) \neq \varnothing$ and that $\mathcal{S} \subseteq \inte \dom f $ and  $\aff(\nabla f( \mathcal{S} ) ) \subseteq \dom f$.  Then $\left( \forall z \in \overleftarrow{E}_{f}(\mathcal{S}) \right)$ $\overleftarrow{\CCO{}}^{ps}_{f}(\mathcal{S})=\Pro_{\aff(\nabla f( \mathcal{S} ) )} (z)  $.  $($See, \cref{theorem:formualCCS}\cref{theorem:formualCCS:EucP}.$)$
  	
  	\item Suppose that $\mathcal{S} \subseteq \inte \dom f $ and $\aff ( \nabla f (\mathcal{S}) ) \subseteq \dom f$, and that $\nabla f(q_{0}), \nabla f(q_{1}), \ldots, \nabla f(q_{m})$ are affinely independent.
  	Then $\overleftarrow{\CCO{}}^{ps}_{f}(\mathcal{S}) $ uniquely exists and has the explicit formula \cref{eq:thm:unique:LinIndpPformula}. $($See,  \cref{thm:unique:LinIndpPformula}.$)$
  	
  	\item Suppose that $f$ is Legendre,  that $ \aff (\mathcal{S}) \cap  \inte \dom f \neq \varnothing$, and that $\overrightarrow{E}_{f}(\mathcal{S} )   \neq \varnothing$.  Then
  	$\left( \forall z \in \overrightarrow{E}_{f}(\mathcal{S} )  \right)$
  	 $ \overleftarrow{\Pro}^{f}_{\aff(\mathcal{S})}(z) \in \overrightarrow{\CCO{}}_{f}(\mathcal{S})$.  $($See, \cref{theorem:forwardCCS}\cref{theorem:forwardCCS:P}.$)$
  	
  \item  Suppose that $f$ is Legendre, and that  $\aff( \mathcal{S}  ) \subseteq  \inte \dom f^{*}$ and $\overrightarrow{E}_{f}(\mathcal{S} )   \neq \varnothing$.  Then
  $\left( \forall z \in \overrightarrow{E}_{f}(\mathcal{S} )  \right)$  $\overrightarrow{\CCO{}}^{ps}_{f}(\mathcal{S})= \nabla f^{*} \left( \Pro_{\aff( \mathcal{S}  )} (\nabla f(z)) \right)$. $($See, 	\cref{theorem:psuCCS:forward}\cref{theorem:psuCCS:forward:P}.$)$

  \item
  Suppose that $f$ is Legendre, that $\aff ( \mathcal{S}) \subseteq \inte \dom f^{*}$, and  that $q_{0}, q_{1}, \ldots, q_{m}$ are affinely independent.
  Then $\overrightarrow{\CCO{}}^{ps}_{f}(\mathcal{S}) $ uniquely exists and has the explicit formula \cref{EQ:thm:LinIndpPformula:TpseudoCCS:formula}. $($See, \cref{thm:LinIndpPformula:TpseudoCCS}.$)$
  \end{enumerate}
 \end{corollary}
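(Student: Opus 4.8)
The plan is to derive each of the six assertions directly from the corresponding result already established in \Cref{sec:BackwardBregmancircumcenters,sec:ForwardBregmancircumcenters}; since this corollary is merely a consolidation under streamlined hypotheses, the only actual work is to check that the present assumptions are special cases of the (slightly more technical) assumptions in the cited statements. So the first step is simply to line up the items: (i) will come from \cref{theorem:formualCCS:Pleft}\cref{theorem:formualCCS:Pleft:P}, (ii) from \cref{theorem:formualCCS}\cref{theorem:formualCCS:EucP}, (iii) from \cref{thm:unique:LinIndpPformula}, (iv) from \cref{theorem:forwardCCS}\cref{theorem:forwardCCS:P}, (v) from \cref{theorem:psuCCS:forward}\cref{theorem:psuCCS:forward:P}, and (vi) from \cref{thm:LinIndpPformula:TpseudoCCS}.

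Next I would discharge the hypothesis matching. For (ii), the parenthetical hypothesis \enquote{$\aff(\nabla f(\mathcal{S})) \subseteq \dom f$} is exactly the one flagged in \cref{theorem:formualCCS}\cref{theorem:formualCCS:EucP}, so (together with $\mathcal{S} \subseteq \inte \dom f$, which realizes \cref{eq:SBack}) that theorem applies verbatim to every $z \in \overleftarrow{E}_{f}(\mathcal{S})$. For (iii), affine independence of $\nabla f(q_{0}), \ldots, \nabla f(q_{m})$ and $\aff(\nabla f(\mathcal{S})) \subseteq \dom f$ are precisely the hypotheses of \cref{thm:unique:LinIndpPformula}, yielding nonemptiness and the explicit formula \cref{eq:thm:unique:LinIndpPformula}. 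For (iv)--(vi), one notes that $f$ Legendre with $\aff(\mathcal{S}) \cap \inte \dom f \neq \varnothing$ gives (iv) at once; that $\aff(\mathcal{S}) \subseteq \inte \dom f^{*}$ implies both $\Pro_{\aff(\mathcal{S})}(\nabla f(z)) \in \inte \dom f^{*}$ (needed for (v)) and, with affine independence of $q_{0}, \ldots, q_{m}$, the hypotheses of \cref{thm:LinIndpPformula:TpseudoCCS} (for (vi)); each conclusion is then immediate from the named result applied to an arbitrary $z$ in the relevant $E$-set.

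The one mildly non-routine point, which I expect to be the main obstacle, is item (i): \cref{theorem:formualCCS:Pleft}\cref{theorem:formualCCS:Pleft:P} requires the membership condition $\pm(\nabla f(q_{i}) - \nabla f(q_{0})) \in \nabla f(\aff(\mathcal{S})) - \overleftarrow{\Pro}^{f^{*}}_{\nabla f(\aff(\mathcal{S}))}(\nabla f(z))$, whereas here we only assume $\nabla f(\aff(\mathcal{S}))$ is a closed affine subspace. To bridge this, I would argue that when $\nabla f(\aff(\mathcal{S}))$ is a closed affine subspace containing $\nabla f(q_{0}), \ldots, \nabla f(q_{m})$, the set $\nabla f(\aff(\mathcal{S})) - \overleftarrow{\Pro}^{f^{*}}_{\nabla f(\aff(\mathcal{S}))}(\nabla f(z))$ coincides with the direction (linear) subspace $\spn\{\nabla f(q_{1}) - \nabla f(q_{0}), \ldots, \nabla f(q_{m}) - \nabla f(q_{0})\}$, which obviously contains each $\nabla f(q_{i}) - \nabla f(q_{0})$ and its negative; this is exactly the observation recorded in the remark following \cref{theorem:formualCCS:Pleft}. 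With that verification in hand, \cref{theorem:formualCCS:Pleft}\cref{theorem:formualCCS:Pleft:P} gives $\overrightarrow{\Pro}^{f}_{\aff(\mathcal{S})}(z) \in \overleftarrow{\CCO{}}_{f}(\mathcal{S})$ for every $z \in \overleftarrow{E}_{f}(\mathcal{S})$, completing the proof. Everything beyond this single check is bookkeeping.
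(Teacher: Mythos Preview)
Your proposal is correct and mirrors the paper's own treatment: the corollary is stated there without a separate proof, each item simply pointing back (via the parenthetical \enquote{See, \ldots}) to the earlier theorem whose hypotheses it specializes, so your plan of lining up hypotheses and invoking \cref{theorem:formualCCS:Pleft}\cref{theorem:formualCCS:Pleft:P}, \cref{theorem:formualCCS}\cref{theorem:formualCCS:EucP}, \cref{thm:unique:LinIndpPformula}, \cref{theorem:forwardCCS}\cref{theorem:forwardCCS:P}, \cref{theorem:psuCCS:forward}\cref{theorem:psuCCS:forward:P}, and \cref{thm:LinIndpPformula:TpseudoCCS} is exactly right. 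One small imprecision in your handling of (i): the set $\nabla f(\aff(\mathcal{S})) - \overleftarrow{\Pro}^{f^{*}}_{\nabla f(\aff(\mathcal{S}))}(\nabla f(z))$ is the direction subspace of the closed affine subspace $\nabla f(\aff(\mathcal{S}))$, which \emph{contains} $L=\spn\{\nabla f(q_{i})-\nabla f(q_{0})\}_{i\in\I}$ but need not coincide with it; this does not affect your conclusion, since any linear subspace containing each $\nabla f(q_{i})-\nabla f(q_{0})$ automatically contains its negative as well---and indeed the paper already records this sufficient condition as the parenthetical \enquote{e.g.} inside the hypothesis of \cref{theorem:formualCCS:Pleft}\cref{theorem:formualCCS:Pleft:P}.
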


Note that backward (forward) Bregman pseudo-cucumcenters are nonempty whenever $\overleftarrow{E}_{f}(\mathcal{S})  \neq \varnothing$ (resp. $\overrightarrow{E}_{f}(\mathcal{S} )   \neq \varnothing$),
see \cref{cor:equi:Ef:CCfS} (resp. \cref{cor:equi:Ef:CCfS:right}).

Let 
 $\CCO{}$ be the classical circumcenter operator defined in \cite[Definition~3.4]{BOyW2018} under the Euclidean distance,
i.e., $f:= \frac{1}{2} \norm{\cdot}^{2}$. Then all backward and forward Bregman  (pseudo-)circumcenters
reduce to the classical circumcenter.

\begin{corollary} \label{cor:characterCCS:right}
	Suppose that $ f :=\frac{1}{2} \norm{\cdot}^{2}$.
	Then the following statements hold.
	\begin{enumerate}
		\item \label{cor:characterCCS:right:eq}$\CCO (\mathcal{S}) =\overleftarrow{\CCO{}}_{f}(\mathcal{S}) =\overleftarrow{\CCO{}}^{ps}_{f}(\mathcal{S}) =\overrightarrow{\CCO{}}_{f}(\mathcal{S}) =\overrightarrow{\CCO{}}^{ps}_{f}(\mathcal{S})$.
		\item \label{cor:characterCCS:right:norm} $\overleftarrow{E}_{f}(\mathcal{S})  = \overrightarrow{E}_{f}(\mathcal{S})  =\Big\{ y \in \mathcal{H} ~:~  (\forall i \in \I) ~ \norm{q_{i} -y} =\norm{q_{0} -y} \Big\}$.
		Consequently, $\CCO  (\mathcal{S})  \neq \varnothing$ if and only if there exists $x \in \mathcal{H}$ such that $	\norm{x -q_{0}} =\norm{x -q_{1}} =\cdots =\norm{x -q_{m}}$, that is, 	 $q_{0},q_{1} \ldots, q_{m}$ lie on a sphere with center $x \in \mathcal{H}$.
	\end{enumerate}
\end{corollary}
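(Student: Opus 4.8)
The plan is to substitute $f := \frac{1}{2}\norm{\cdot}^{2}$ and observe that every Bregman ingredient degenerates to its Euclidean analogue. First I would record the elementary facts, all standard: $f \in \Gamma_{0}(\mathcal{H})$ is Legendre with $\dom f = \dom f^{*} = \mathcal{H} = \inte \dom f = \inte \dom f^{*}$, one has $f^{*} = f$ and $\nabla f = \nabla f^{*} = \Id$ by \cref{fact:nablaf:nablaf*:id}, and $\D_{f}(x,y) = \frac{1}{2}\norm{x-y}^{2}$ for all $x,y$, as already noted right after \cref{defn:BregmanDistance}. Consequently $\mathcal{S} \subseteq \inte \dom f$ holds automatically and $\mathcal{P}(\inte \dom f) = \mathcal{P}(\dom f)$ is just the family of nonempty finite subsets of $\mathcal{H}$, so the hypotheses of the relevant results in \Cref{sec:BackwardBregmancircumcenters,sec:ForwardBregmancircumcenters} (in particular $\dom f = \mathcal{H}$) are met.

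Next I would prove \cref{cor:characterCCS:right:norm} by a direct computation. From \cref{eq:Ef}, $x \in \overleftarrow{E}_{f}(\mathcal{S})$ means $\frac{1}{2}\norm{x-q_{0}}^{2} = \cdots = \frac{1}{2}\norm{x-q_{m}}^{2}$, i.e.\ $\norm{q_{i}-x} = \norm{q_{0}-x}$ for every $i \in \I$; from \cref{eq:Eright}, $y \in \overrightarrow{E}_{f}(\mathcal{S})$ means $\frac{1}{2}\norm{q_{0}-y}^{2} = \cdots = \frac{1}{2}\norm{q_{m}-y}^{2}$, which describes the same set. This yields the displayed identity $\overleftarrow{E}_{f}(\mathcal{S}) = \overrightarrow{E}_{f}(\mathcal{S}) = \{y \in \mathcal{H} : (\forall i \in \I)\ \norm{q_{i}-y} = \norm{q_{0}-y}\}$. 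For the ``consequently'' clause, the forward implication is immediate, since a classical circumcenter is in particular such an $x$; for the converse I would invoke \cref{cor:equi:Ef:CCfS} with $\dom f = \mathcal{H}$ (so $\overleftarrow{E}_{f}(\mathcal{S}) \neq \varnothing \Leftrightarrow \overleftarrow{\CCO{}}^{ps}_{f}(\mathcal{S}) \neq \varnothing$) together with the identification $\overleftarrow{\CCO{}}^{ps}_{f}(\mathcal{S}) = \CCO(\mathcal{S})$ from \cref{cor:characterCCS:right:eq}; alternatively, one projects any equidistant $x$ onto $\aff(\mathcal{S})$ and checks, using $x - \Pro_{\aff(\mathcal{S})}(x) \perp \aff(\mathcal{S}) - \aff(\mathcal{S})$, that the projection remains equidistant, hence lies in $\CCO(\mathcal{S})$.

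Then \cref{cor:characterCCS:right:eq} follows by matching definitions. Since $\nabla f = \Id$ we have $\nabla f(\mathcal{S}) = \mathcal{S}$, so $\aff(\nabla f(\mathcal{S})) = \aff(\mathcal{S})$, whence $\overleftarrow{\CCO{}}_{f}(\mathcal{S}) = \aff(\mathcal{S}) \cap \overleftarrow{E}_{f}(\mathcal{S}) = \overleftarrow{\CCO{}}^{ps}_{f}(\mathcal{S})$ by \cref{defn:CCS:Bregman:left}. Likewise $\nabla f^{*} = \Id$ gives $\nabla f^{*}(\aff \mathcal{S}) = \aff(\mathcal{S})$, so $\overrightarrow{\CCO{}}_{f}(\mathcal{S}) = \aff(\mathcal{S}) \cap \overrightarrow{E}_{f}(\mathcal{S}) = \overrightarrow{\CCO{}}^{ps}_{f}(\mathcal{S})$ by \cref{defn:CCS:Bregman:forward}. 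By \cref{cor:characterCCS:right:norm} the two $E$-sets coincide, so all four Bregman (pseudo-)circumcenters equal $\aff(\mathcal{S}) \cap \{y : (\forall i \in \I)\ \norm{q_{i}-y} = \norm{q_{0}-y}\}$, which is precisely $\CCO(\mathcal{S})$ by \cite[Definition~3.4]{BOyW2018}.

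There is no serious obstacle: the argument is essentially bookkeeping, relying on $\nabla f = \nabla f^{*} = \Id$ and $\dom f = \mathcal{H}$ to collapse all the affine-hull and $\nabla f^{*}$-image constructions. The only point requiring a little care is the ``consequently'' equivalence in \cref{cor:characterCCS:right:norm}, where one must argue that the existence of \emph{some} point equidistant from $q_{0},\ldots,q_{m}$ already forces a circumcenter lying \emph{in $\aff(\mathcal{S})$} to exist --- handled either through \cref{cor:equi:Ef:CCfS} or through the orthogonal-projection argument indicated above.
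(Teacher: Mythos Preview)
Your proposal is correct and is precisely the bookkeeping the paper has in mind; the paper itself gives no proof for this corollary, treating it as immediate from the definitions once $\nabla f=\nabla f^{*}=\Id$, $\dom f=\mathcal{H}$, and $\D_{f}(x,y)=\tfrac12\norm{x-y}^{2}$ are observed. Your handling of the ``consequently'' clause via \cref{cor:equi:Ef:CCfS} (or, equivalently, the projection argument) is the only nonimmediate step, and both routes are valid.
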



The following example illustrates the existence of the backward Bregman pseudo-circumcenter, while the classical circumcenter
does not exist.
\begin{example} \label{example:Existence}
	Suppose that $\mathcal{H} =\mathbb{R}^{3}$	and   $\mathcal{S}:= \{(1,2,1), (0.5,1.5,0.5), (1.5,2.5,1.5)\} $. Denote by  $(\forall x \in \mathbb{R}^{3} )$ $x= (x_{i})^{3}_{i=1} $.   Suppose that $(\forall x \in \left]0, +\infty\right[^{3} )$ $f (x)=   -\sum^{3}_{i=1}   \ln (x_{i})  $, with $\dom f =\left]0, +\infty\right[^{3} $. Then the following assertions hold.
	\begin{enumerate}
		\item  \label{example:Existence:no} The circumcenter under the Euclidean distance doesn't exist.
		\item  \label{example:Existence:yes} $\overleftarrow{\CCO{}}^{ps}_{f}(\mathcal{S}) \approx ( 0.7641,    0.8744,    0.7641)$.
	\end{enumerate}
\end{example}
\begin{proof}
	Denote by $x:=(1,2,1)$, $y:= (0.5,1.5,0.5)$, and $z:=(1.5,2.5,1.5)$.
	
	\cref{example:Existence:no}: Because $z-x = - (y-x)$, i.e., $x, y$ and $z$ are affinely dependent, due to \cite[Theorem~8.1]{BOyW2018}, \cref{example:Existence:no} is true.
	
	\cref{example:Existence:yes}: Because $(\forall u \in \left]0, +\infty\right[^{3} )$ $\nabla f(u) = \left(  -\frac{1}{u_{1}} ,   -\frac{1}{u_{2}},   -\frac{1}{u_{3}} \right)^{\intercal} $, we know that  $\nabla f(x) =-(1, \frac{1}{2}, 1)^{\intercal} $, $\nabla f(y) =-(2, \frac{2}{3}, 2)^{\intercal} $, and $\nabla f(z) =-(\frac{2}{3}, \frac{2}{5}, \frac{2}{3})^{\intercal} $. It is easy to see that $\nabla f(x) $,  $\nabla f(y) $, and  $\nabla f(z) $ are affinely independent. Hence, the desired result follows easily from \cref{prop:formualCCS:matrixEQ}.
\end{proof}
\cref{fig:BackwardBregmanpseudoCCS} below illustrates \cref{example:Existence}.
The red intersection point of the three green, blue and yellow Bregman balls, located on the affine subspace $\aff \nabla f (\mathcal{S})$ and labeled  as ps-CC(S),  is $\overleftarrow{\CCO{}}^{ps}_{f}(\mathcal{S})$.
However,  the points $x, y$ and $z$ in \cref{fig:BackwardBregmanpseudoCCS} are colinear, which implies that the classical circumcenter doesn't exist.
\begin{figure}[H]
	\centering
	\includegraphics[width=0.5\textwidth]{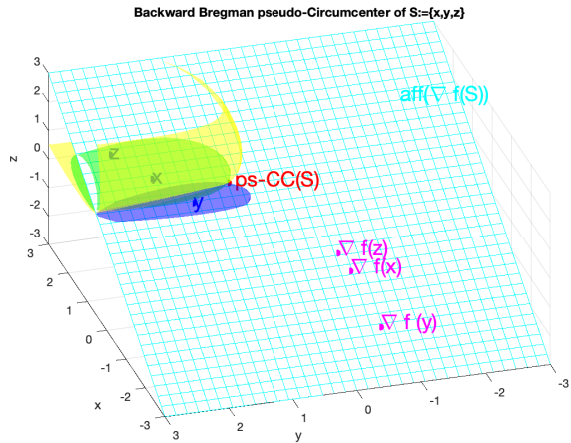}
	\caption{Backward Bregman pseudo-circumcenter exists but the classical circumcenter doesn't.}
	\label{fig:BackwardBregmanpseudoCCS}
\end{figure}

The following \cref{example:CCS}\cref{example:CCS:Consist} and \cref{example:CCS}\cref{example:CCS:Different} show, respectively, the difference of the backward and forward Bregman (pseudo-)circumcenters, and the classical circumcenter.
\begin{example} \label{example:CCS}
	Suppose that $\mathcal{H} =\mathbb{R}^{3}$	and   $\mathcal{S}:= \{(1,1,1), (1,2,1), (1,1,2)\} $. Denote by  $(\forall x \in \mathbb{R}^{3} )$ $x= (x_{i})^{3}_{i=1} $.   We consider backward and forward Bregman (pseudo-)circumcenters of $\mathcal{S}$ w.r.t. the energy and negative entropy functions.
	\begin{enumerate}
		\item \label{example:CCS:Consist} If $ f :=\frac{1}{2} \norm{\cdot}^{2}$, in view of \cref{cor:characterCCS:right}\cref{cor:characterCCS:right:eq}, we have $\overleftarrow{\CCO{}}_{f}(\mathcal{S}) =\overrightarrow{\CCO{}}_{f}(\mathcal{S}) =\overleftarrow{\CCO{}}^{ps}_{f}(\mathcal{S})  =\overrightarrow{\CCO{}}^{ps}_{f}(\mathcal{S})=\CCO{(\mathcal{S})} = (1, \frac{3}{2}, \frac{3}{2})^{\intercal}$.
		\item \label{example:CCS:Different} Suppose that $(\forall x \in \left[0, +\infty\right[^{3} )$ $f(x) =\sum^{3}_{i=1} x_{i} \ln( x_{i}) -x_{i}$. Then $f(1,1,1)=-3$, $f(1,2,1)=2\ln(2) -4$, $f(1,1,2)=2\ln(2) -4$. Moreover, $(\forall x \in \left]0, +\infty\right[^{3} )$ $\nabla f(x) = \left(  \ln(x_{1}),   \ln(x_{2}),  \ln(x_{3}) \right)^{\intercal} $, and, in view of \cite[Proposition~13.30]{BC2017} and \cite[Example~6.5]{BB1997Legendre}, $(\forall x \in \mathbb{R}^{3} )$ $f^{*}(x) =\sum^{3}_{i=1} \exp (x_{i} )$ and $\nabla f^{*}(x) =\left( \exp (x_{1} ), \exp (x_{2} ), \exp (x_{3} )  \right)^{\intercal}$.

		\begin{enumerate}
			\item \label{example:CCS:Different:a} \cref{exam:Back:Rn}\cref{exam:Back:Rn:negative:}  and  \cref{exam:forward:Rn}\cref{exam:forward:Rn:negative:} imply that
			\begin{align*}
			\overleftarrow{\CCO{}}_{f}(\mathcal{S})  = \left\{ \left(1, \frac{1}{\ln 2} , \frac{1}{\ln 2} \right) \right\} \quad  \text{and} \quad
			\overrightarrow{\CCO{}}_{f}(\mathcal{S}) = 	\left\{ \left(1, \frac{4}{\rm e} , \frac{4}{\rm e} \right) \right\}.
			\end{align*}
			\item
			\cref{thm:unique:LinIndpPformula} and  \cref{thm:LinIndpPformula:TpseudoCCS} imply that
			\begin{align*}
			\overleftarrow{\CCO{}}^{ps}_{f}(\mathcal{S})  = \left(0, \frac{1}{\ln2}, \frac{1}{\ln2} \right) \quad \text{and} \quad \overrightarrow{\CCO{}}^{ps}_{f}(\mathcal{S}) =\left( {\rm e}, \frac{4}{\rm e},   \frac{4}{\rm e} \right).
			\end{align*}
		\end{enumerate}
	\end{enumerate}
\end{example}
To visualize \cref{example:CCS}\cref{example:CCS:Different}, put $x:=(1,1,1), y:= (1,2,1)$, and $z:= (1,1,2)$ so that $\mathcal{S}=\{x,y,z\}$. \cref{fig:BFBregmanCircumcenters} below illustrates the difference between sets $\overleftarrow{E}_{f}(\mathcal{S})$ and $\overrightarrow{E}_{f}(\mathcal{S})$ because of the asymmetry of the general Bregman distance.
The red and magenta lines are $\overleftarrow{E}_{f}(\mathcal{S})$ and $\overrightarrow{E}_{f}(\mathcal{S})$, respectively.
The $\overleftarrow{\CCO{}}_{f}(\mathcal{S})$ (resp. $\overrightarrow{\CCO{}}_{f}(\mathcal{S})$) is the intersection  of the red (resp. magenta) line and the cyan  plane.
\begin{figure}[H]
	\centering
	\includegraphics[width=0.5\textwidth]{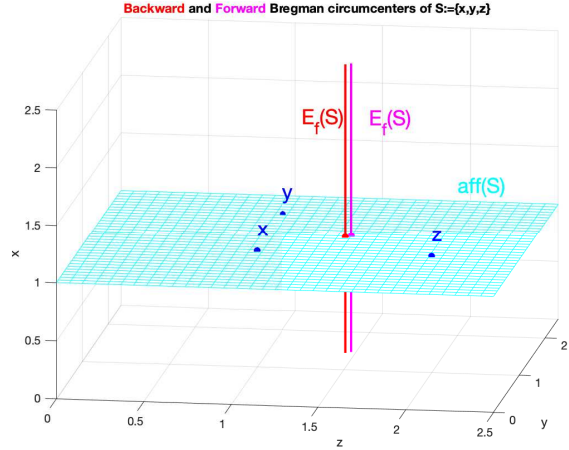}
	\caption{Difference of the backward and forward Bregman circumcenters}
	\label{fig:BFBregmanCircumcenters}
\end{figure}
In \cref{fig:BFBregmanBalls}(a)
(resp.\cref{fig:BFBregmanBalls}(b)), the green, blue and yellow sets are \enquote{Bregman balls}.
Because the intersections of the corresponding  three balls in \cref{fig:BFBregmanBalls} happen to be located  on the affine subspace $\aff \{x,y,z\}$, via \cref{defn:CCS:Bregman:left}\cref{defn:CCS:Bregman:left:}  and \cref{defn:CCS:Bregman:forward}\cref{defn:CCS:Bregman:forward:},
these red intersection points labelled as $CC(S)$ on the left and right pictures  below  are $\overleftarrow{\CCO{}}_{f}(\mathcal{S})$
and $\overrightarrow{\CCO{}}_{f}(\mathcal{S})$, respectively.

\begin{figure}[H]
	\centering
	\noindent
	\hspace*{-0.8cm}
	\begin{tabular}{ccc}
		\subfloat[]{\includegraphics[scale=0.45]{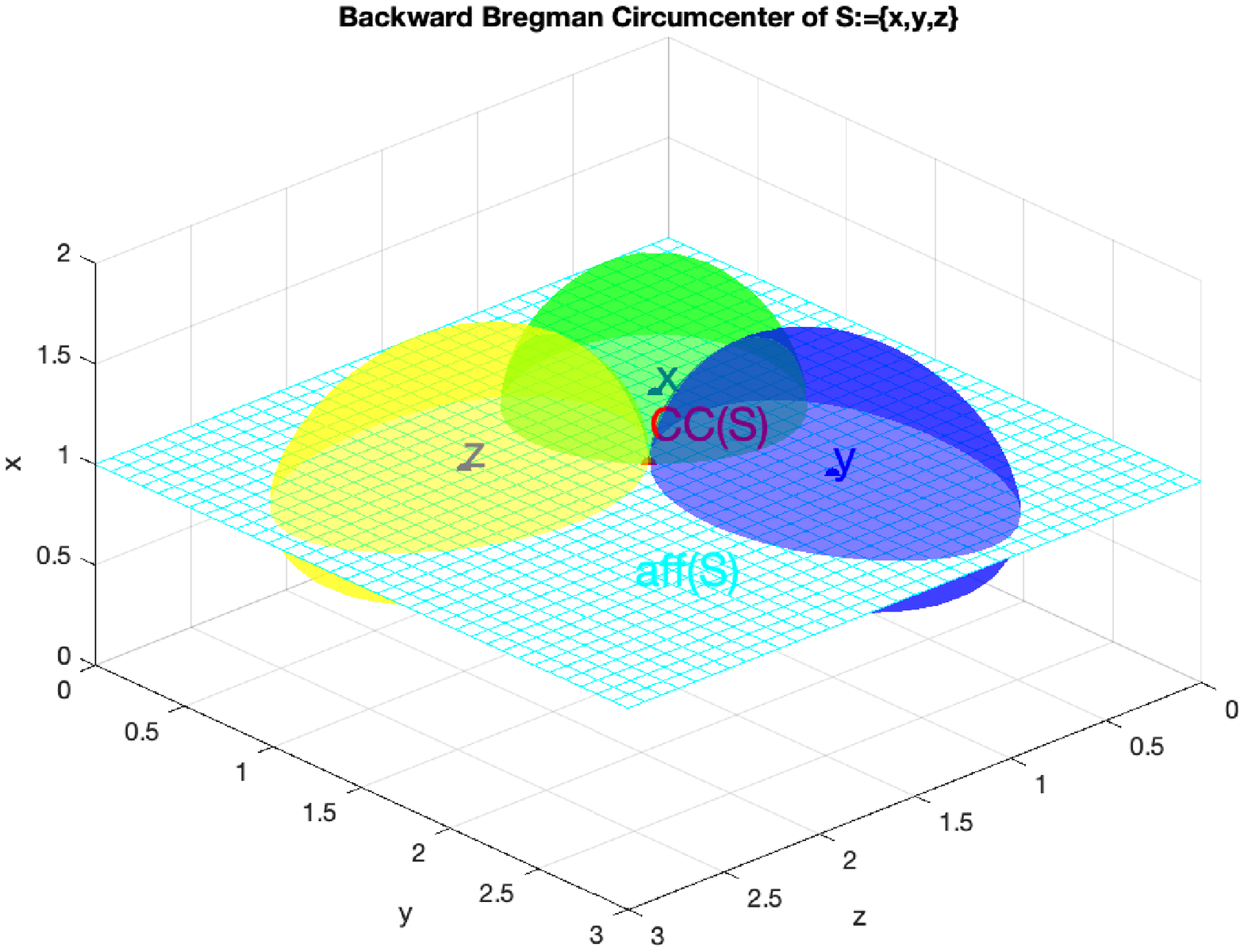}}
		\hspace*{0.2cm}
		\subfloat[]{\includegraphics[scale=0.45]{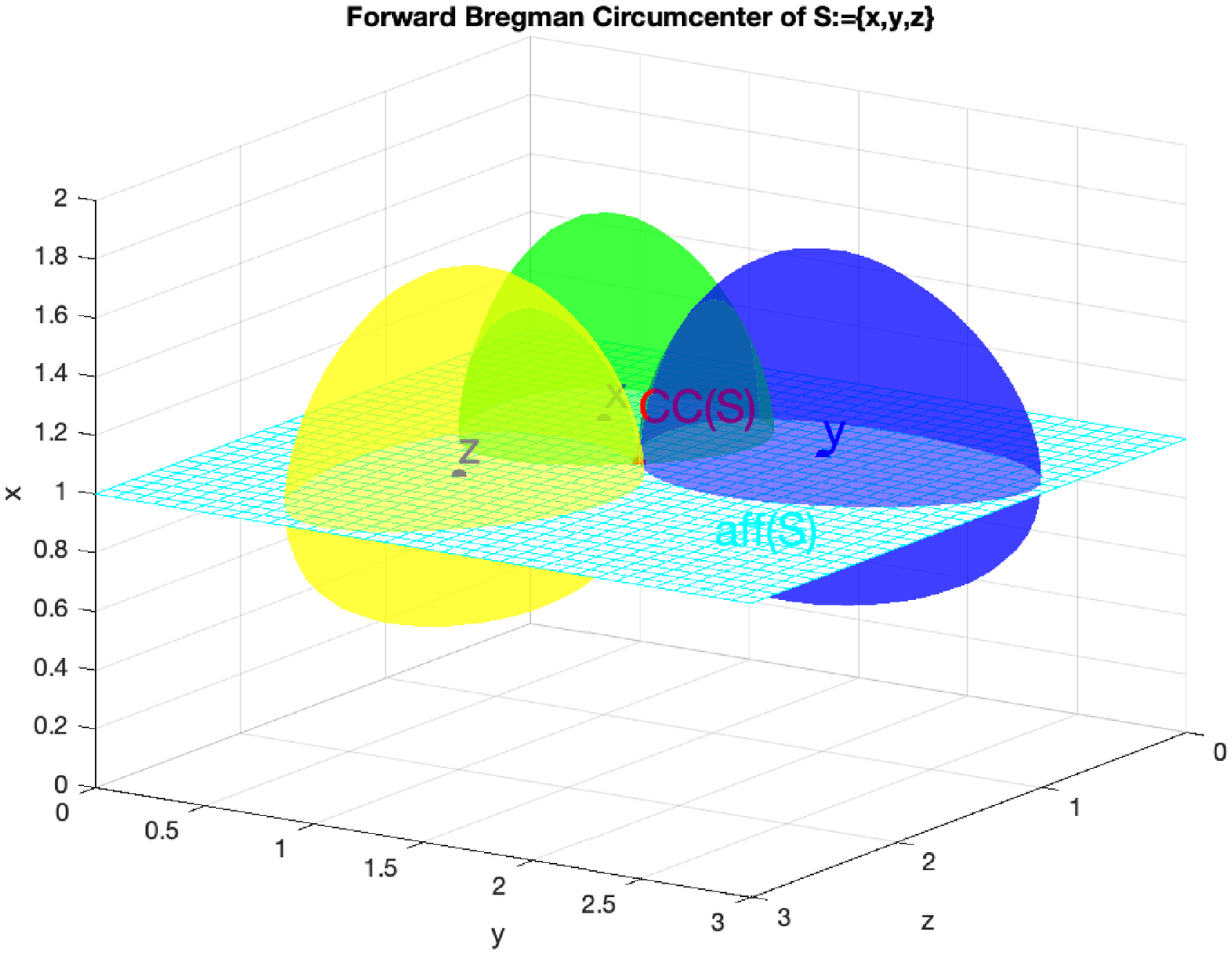}}
	\end{tabular}
	\caption{Backward and forward {Bregman} circumcenters} \label{fig:BFBregmanBalls}
	\hspace*{-0.5cm}
\end{figure}

\section{Conclusions}\label{s:conclude}
 In this work, we presented basic theory of Bregman circumcenters. We introduced  backward and forward Bregman
 (pseudo-)circumcenters. We also explored the existence and uniqueness of backward and forward Bregman (pseudo-)circumcenters, and presented explicit formulae of Bregman backward and forward pseudo-circumcenters. Various examples were given to illustrate these Bregman circumcenters.
 The connections between backward and forward Bregman (pseudo-)circumcenters were also established
 by duality.
We shall pursue applications of Bregman circumcenters in accelerating iterative methods in optimization in future.

	\section*{Acknowledgements}
XW was partially supported by the NSERC Discovery Grant.	
The authors would like to thank Dr. H.~H.~Bauschke for
his useful discussions and suggestions.

\addcontentsline{toc}{section}{References}

\bibliographystyle{abbrv}

\end{document}